\documentclass[nonblindrev]{informsprepr}

\makeatletter
\long\def\theARTICLETOPLEFT{}%
\def\theARTICLETOPRIGHT{}
\makeatother

\usepackage{amsmath,amssymb,amsfonts,mathtools}
\usepackage{mathrsfs}
\usepackage[bookmarks=true,linkcolor=red,citecolor=blue,urlcolor=blue,colorlinks=true,breaklinks,pdfencoding=unicode,psdextra]{hyperref}
\usepackage{enumerate}
\usepackage[textsize=footnotesize,textwidth=1.5cm]{todonotes}
\usepackage[ruled,vlined,procnumbered]{algorithm2e}
\usepackage{bm}
\usepackage{dsfont}
\usepackage{nicematrix}
\usepackage{booktabs}
\usepackage{threeparttable}
\usepackage{tikz}
\usepackage{pgfplots}
\usepackage{bookmark}
\usepackage{multirow}
\usepackage[normalem]{ulem}
\usepackage{longtable}
\usepackage[size=small]{subcaption}
\usepackage{lscape}
\usepackage{makecell}
\usepackage{rotating}
\usepackage{fancyvrb}
\usepackage{anyfontsize}
\usetikzlibrary{positioning}

\usepackage{comment}
\usepackage{tabularx}
\usepackage[margin=1in]{geometry}
\usepackage{graphicx}
\usepackage{rotating}
\usepackage{array}
\usepackage{makecell}
\usepackage{siunitx}  

\usepackage{pgfplots}
\pgfplotsset{compat=newest,compat/show suggested version=false}

\usepackage{tcolorbox}

\usepackage{dcolumn}
\newcolumntype{d}[1]{D{/}{\,/\,}{#1}}

\usetikzlibrary{shapes, calc, arrows}

\usepackage{apacite}
\usepackage{natbib}
\bibpunct[, ]{(}{)}{,}{a}{}{,}%
\def\bibfont{\small}%
\def\newblock{\ }%
\newcommand{\R}{\mathds{R}}
\newcommand{\Z}{\mathds{Z}}
\newcommand{\B}[1]{\{0,1\}^{#1}}

\newcommand{\define}{\coloneqq}
\newcommand{\card}[1]{|#1|}
\newcommand{\T}{^{\top}}
\newcommand{\sprod}[2]{{#1}\T {#2}}

\OneAndAHalfSpacedXI
\def\bibfont{\small}%
\def\newblock{\ }%
\pgfplotsset{compat=1.16}
\bibpunct[, ]{(}{)}{,}{a}{}{,}%
\graphicspath{{Images/}}

\usepackage{etoolbox}
\newcommand{\zerodisplayskips}{%
  \setlength{\abovedisplayskip}{3pt}%
  \setlength{\belowdisplayskip}{3pt}%
  \setlength{\abovedisplayshortskip}{0pt}%
  \setlength{\belowdisplayshortskip}{0pt}}
\appto{\normalsize}{\zerodisplayskips}
\appto{\small}{\zerodisplayskips}
\appto{\footnotesize}{\zerodisplayskips}

\newcommand{\potvar}{\lambda}
\newcommand{\potvarvec}{\boldsymbol \lambda}
\newcommand{\potatoes}{\mathscr{S}}
\newcommand{\potatoessub}{\potatoes^{\subseteq}}
\newcommand{\cliques}{\mathscr{C}}
\newcommand{\edges}{\mathcal E}
\newcommand{\vertices}{\mathcal V}
\newcommand{\graph}{\mathcal G}
\newcommand{\bx}{\boldsymbol{x}}
\newcommand{\by}{\boldsymbol{y}}
\newcommand{\bc}{\boldsymbol{c}}

\newcommand{\bb}{\boldsymbol{b}}
\newcommand{\bA}{\boldsymbol{A}}

\newcommand{\subgraph}{\mathcal F}
\newcommand{\SUBedges}{\edges(\subgraph)}
\newcommand{\SUBvertices}{\vertices(\subgraph)}
\newcommand{\SUBgraphs}{\mathscr F_{\graph}}

\newcommand{\oldBP}{$\text{ILP}_\text{E1}$\xspace}
\newcommand{\zigbase}{$\text{ILP}_\text{E2}$\xspace}
\newcommand{\zigbases}{$\text{ILP}_\text{E2}^\star$\xspace}
\newcommand{\zigbasesa}{$\text{ILP}_\text{E2, a}^\star$\xspace}
\newcommand{\zigbasesb}{$\text{ILP}_\text{E2, b}^\star$\xspace}

\newcommand{\ILPnat}{$\text{ILP}_\text{N}$\xspace}
\newcommand{\ILPhyb}{$\text{ILP}_\text{H}$\xspace}
\newcommand{\ILPcomp}{$\text{ILP}_\text{C}$\xspace}
\newcommand{\ILPcompstar}{$\text{ILP}_\text{C}^\star$\xspace}

\newcommand{\lpval}{\zeta}
\newcommand{\lpvalnat}{\lpval_{\mathrm{N}}}
\newcommand{\lpvalbp}{\lpval_{\mathrm{E1}}}
\newcommand{\lpvalzig}{\lpval_{\mathrm{E2}}}

\newcommand{\lexgeq}[1]{\geq_{\mathrm{lex}{#1}}}

\newcommand{\BPs}{BP$^\star$}
\newcommand{\BPCs}{BP$^\star$-C}

\newcommand{\oldBPLP}{$\text{LP}_\text{E1}$\xspace}
\newcommand{\zigbaseLP}{$\text{LP}_\text{E2}$\xspace}

\newcommand{\ILPnatLP}{$\text{LP}_\text{N}$\xspace}

\usetikzlibrary{calc}

\TheoremsNumberedThrough     
\ECRepeatTheorems

\EquationsNumberedThrough    

\MANUSCRIPTNO{}

\begin{document}

\RUNAUTHOR{}

\RUNTITLE{Branch-and-price strikes back for the $k$-vertex cut problem}

\TITLE{Branch-and-price strikes back for the $k$-vertex cut problem}

\ARTICLEAUTHORS{

\AUTHOR{Fabio Ciccarelli}
\AFF{Department of Computer, Control, and Management Engineering “Antonio Ruberti”, Sapienza University of Rome, Rome, Italy, \EMAIL{f.ciccarelli@uniroma1.it}}

\AUTHOR{Fabio Furini}
\AFF{Department of Computer, Control, and Management Engineering “Antonio Ruberti”, Sapienza University of Rome, Rome, Italy, \EMAIL{fabio.furini@uniroma1.it}}

\AUTHOR{Christopher Hojny}
\AFF{Department of Mathematics and Computer Science, Eindhoven University of Technology, Eindhoven, The Netherlands \\ \EMAIL{c.hojny@tue.nl}}

\AUTHOR{Marco Lübbecke}
\AFF{Chair of Operations Research, RWTH Aachen University, Aachen, Germany \\ \EMAIL{luebbecke@or.rwth-aachen.de}}

} 

\ABSTRACT{
Given an undirected graph, the \textit{$k$-vertex cut problem} ($k$-VCP) asks for a minimum-cost set of vertices whose removal yields at least $k$ connected components in the resulting graph. The $k$-VCP is an important problem in network optimization, with applications in infrastructure protection and epidemic containment.
We present a new extended integer linear programming (ILP) formulation that unifies and strengthens existing models and serves as the foundation for a new branch-and-price algorithm for the $k$-VCP. An in-depth theoretical study enables us to devise algorithmic components such as tailored branching rules that preserve the structure of the pricing problems, as well as valid inequalities and symmetry-handling techniques.
We also show that our new model dominates all previous ILP formulations of the $k$-VCP in terms of their linear relaxations, which theoretically justifies the computational effectiveness of our approach.
Extensive computational experiments against state-of-the-art methods demonstrate substantially improved performance, both in terms of instances solved to proven optimality and running times. 

}%

\KEYWORDS{graph partitioning; critical vertex detection problems; integer linear programming; branch-and-price}
\HISTORY{\today}

\maketitle

\section{Introduction}
\label{sec:intro}

Graph disconnection problems via critical vertex deletion are fundamental in network optimization, due to their theoretical significance as well as their broad applications in telecommunication and transportation systems, computational biology, network vulnerability assessment, and immunization against epidemics~\citep{Lalou2018}.
As connectivity underlies the behavior, function, and resilience of complex networks, understanding the mechanisms through which it can be altered, influenced, or reorganized is important for both graph theory and real-world applications. 
Within this broad family of \textit{critical vertex detection problems} (CVDPs), a prominent class is concerned with fragmenting the graph through the deletion of vertices.

In this paper, we focus on the \emph{$k$-vertex cut problem} ($k$-VCP), an important CVDP that has received considerable attention in recent years; see, e.g.,~\cite{Marx,BGZ14,CFLMMM18,FurLMP2020}. 

Formally, let $\graph = (\vertices, \edges)$ be a simple undirected graph, where $\vertices$ denotes the set of vertices and $\edges$ the set of edges.  
For any subset of vertices $S \subseteq \vertices$, the induced subgraph $\graph[S] = (S, \edges[S])$ consists of the vertices in $S$ together with the edges
\[
\edges[S] = \{\, \{u,v\} \in \edges : u, v \in S \,\},
\]
i.e., all edges of $\graph$ whose endpoints lie in $S$.  
A \emph{connected component} is an inclusion-wise maximal connected induced subgraph of~$\graph$.
A \emph{vertex cut} is a subset of vertices $\vertices_0 \subseteq \vertices$ whose removal disconnects the graph, i.e., the induced subgraph $\graph[\vertices \setminus \vertices_0]$ contains at least two connected components.  
If the induced subgraph contains at least $k$ connected components, then $\vertices_0$ is called a \emph{$k$-vertex cut}.
The (weighted) $k$-VCP can therefore be formally defined as follows:

\medskip
\begin{definition}[$k$-vertex cut problem]
Given a graph $\graph = (\vertices, \edges)$, a non-negative cost $c_v$ for each vertex $v \in \vertices$, and an integer $k \ge 2$, the \emph{$k$-vertex cut problem} ($k$-VCP) consists in finding a minimum-cost set $\vertices_0 \subseteq \vertices$ such that the induced subgraph $\graph[\vertices \setminus \vertices_0]$ contains at least $k$ connected components.
\end{definition}
\medskip

An equivalent structural characterization 
of $k$-vertex cuts that we use throughout the paper is as follows.
A subset of vertices $\vertices_0 \subseteq \vertices$ is a $k$-vertex cut if and only if the 
induced subgraph $\graph[\vertices \setminus \vertices_0]$ admits a partition of its vertex 
set into $k$ non-empty subsets 
$\vertices_1, \vertices_2, \ldots, \vertices_k$
that are pairwise disconnected, in the sense that no edge of $\edges$ has endpoints 
in two different subsets.
Each subset may contain one or more connected components; what matters is that the 
$k$ subsets are mutually disconnected in the induced subgraph $\graph[\vertices \setminus \vertices_0]$.
For ease of reference, in the remainder of the paper, we will refer to each subset 
$\vertices_1, \vertices_2, \ldots, \vertices_k$ as a  \emph{cluster} of the graph.
From a complementary perspective, the $k$-VCP can also be viewed as finding a 
maximum-cost set of vertices $\bigcup_{i=1}^k \vertices_i$ such that 
the induced subgraph $\graph\!\left[\bigcup_{i=1}^k \vertices_i\right]$ contains at least 
$k$ connected components.

Figure~\ref{fig:karate_example} illustrates optimal solutions of the $k$-VCP on the classical Zachary’s karate club network \citep{zachary1977information} for $k \in \{3,5,10\}$ with unit vertex costs. The vertices of the graph are labeled with the numbers displayed inside each node. This instance has also been used to illustrate optimal solutions of other related optimization problems on graphs; see, e.g., \citet{Furini2019, furini2021branch, FuriniLMP22}.
For each value of $k$, the corresponding subfigure shows the vertices of an optimal $k$-vertex cut in gray, together with their incident edges drawn as dashed lines to indicate the portion of the graph removed by the vertex cut. The remaining vertices are partitioned into exactly $k$ connected components, each highlighted by a different color and shape. For $k=3$, it is sufficient to remove a single vertex; for $k=5$, two vertices must be removed; and for $k=10$, four vertices are required. The figure illustrates both the increasing fragmentation of the graph and the change in the structure of optimal $k$-vertex cuts as $k$ grows. Further details regarding these optimal solutions are provided in Appendix~\ref{sec:karate}.

\begin{figure}[htbp]
\centering
\begin{subfigure}{0.32\textwidth}
    \centering
    \begin{tikzpicture}[
  scale=.4,
  transform shape,
  >=stealth',
  auto,
  thick,
  %
  sty1/.style={
    circle,
    minimum size=5ex,
    inner sep=0pt,
    fill=gray!15,
    draw=gray!70,
    thick,
    dashed,
    font=\sffamily\bfseries,
  },
  %
  sty2/.style={
    diamond,
    minimum size=6ex,
    inner sep=0pt,
    fill=blue!35,
    draw=blue!80!black,
    thick,
    font=\sffamily\bfseries,
  },
  sty3/.style={
    rectangle,
    minimum width=4.5ex,
    minimum height=4.5ex,
    rounded corners=2pt,
    fill=red!35,
    draw=red!80!black,
    thick,
    font=\sffamily\bfseries,
  },
  sty4/.style={
    regular polygon,
    regular polygon sides=5,
    minimum size=6ex,
    inner sep=0pt,
    fill=yellow!40,
    draw=yellow!70!black,
    thick,
    font=\sffamily\bfseries,
  },
  sty5/.style={
    star,
    star points=5,
    minimum size=5ex,
    inner sep=0pt,
    fill=green!35,
    draw=green!60!black,
    thick,
    font=\sffamily\bfseries,
  },
  sty6/.style={
    regular polygon,
    regular polygon sides=6,
    minimum size=5ex,
    inner sep=0pt,
    fill=violet!35,
    draw=violet!70!black,
    thick,
    font=\sffamily\bfseries,
  },
  sty7/.style={
    regular polygon,
    regular polygon sides=8,
    minimum size=5ex,
    inner sep=0pt,
    fill=orange!40,
    draw=orange!80!black,
    thick,
    font=\sffamily\bfseries,
  },
  sty8/.style={
    circle,
    minimum size=5ex,
    inner sep=0pt,
    fill=cyan!30,
    draw=cyan!80!black,
    thick,
    font=\sffamily\bfseries,
  },
  sty9/.style={
    rectangle,
    minimum width=4.2ex,
    minimum height=4.2ex,
    inner sep=0pt,
    fill=magenta!30,
    draw=magenta!75!black,
    double,
    semithick,
    font=\sffamily\bfseries,
  },
  sty10/.style={
    regular polygon,
    regular polygon sides=3,
    minimum size=4.4ex,
    inner sep=0pt,
    fill=red!25!orange!45,
    draw=red!80!black,
    thick,
    font=\sffamily\bfseries,
  },
  sty11/.style={
    ellipse,
    minimum width=5.5ex,
    minimum height=4ex,
    inner sep=0pt,
    fill=teal!25,
    draw=teal!80!black,
    thick,
    font=\sffamily\bfseries,
  }
]

\def\styleList{{
    "sty1", 
    "sty3", 
    "sty3", 
    "sty3", 
    "sty2", 
    "sty2", 
    "sty2", 
    "sty3", 
    "sty3", 
    "sty3", 
    "sty2", 
    "sty11", 
    "sty3", 
    "sty3", 
    "sty3", 
    "sty3", 
    "sty2", 
    "sty3", 
    "sty3", 
    "sty3", 
    "sty3", 
    "sty3", 
    "sty3", 
    "sty3", 
    "sty3", 
    "sty3", 
    "sty3", 
    "sty3", 
    "sty3", 
    "sty3", 
    "sty3", 
    "sty3", 
    "sty3", 
    "sty3"  
}}
                 
\foreach [count=\i] \name/\vtx/\x/\y in
   { 1/0/15.5211316596282/41.0925504659646,
     2/1/14.6390815702147/33.7269542345312,
     3/2/18.2112847875881/27.1660631628311,
     4/3/2.91137130520034/31.1035010896308,
     5/4/20.6221982850349/56.0684161971344,
     6/5/10.9027663935224/53.9950027520863,
     7/6/14.4855568384822/57.9297127520071,
     8/7/6.17121144000743/37.298845170394,
     9/8/23.4773971576921/22.4742150655837,
    10/9/9.82770988134235/15.6136283527571,
    11/10/23.086699286335/51.2128793953601,
    12/11/29.2322906856967/48.4193234425546,
    13/12/0.55/39.7024040650424,
    14/13/8.05597879799012/26.2501984343514,
    15/14/22.9907261607264/0.55,
    16/15/18.122707203845/2.33663683106423,
    17/16/9.76086350345417/61.162177007488,
    18/17/25.3425174790231/41.883373696669,
    19/18/29.0523816114879/1.25228807736234,
    20/19/23.9690771969756/30.850916135437,
    21/20/41.9960199044359/10.4921075296759,
    22/21/6.38761511368505/44.9441642578604,
    23/22/13.8330379043146/11.58374465326659,
    24/23/38.9875226611639/16.2165693313103,
    25/24/42.4177271499389/27.5322772135241,
    26/25/43.8725928142509/22.0754523644347,
    27/26/34.2769364671048/2.39168442379882,
    28/27/36.0924435922861/21.4727181445174,
    29/28/28.6887912037747/27.4400642389826,
    30/29/38.0987050326266/6.33523851750264,
    31/30/15.6740692174605/19.0960440954581,
    32/31/34.3654379473733/26.7049121440653,
    33/32/21.1954250619251/10.84851085183699,
    34/33/31.8965595144377/17.0453513984002    }
{
    {
    \pgfmathparse{\styleList[\i-1]} 
    \let\stileCorrente\pgfmathresult
    \node[\stileCorrente] (\vtx) at ($(0,0)!0.25!(\x,\y)$) {\small\name};
}
}

\foreach \tail/\head in
{0/1,0/3,0/4,0/5,0/6,0/7,0/8,0/10,0/11,0/12,0/13,0/17,0/19,0/21,0/31}
\path[dashed, draw=gray!50] (\tail) edge (\head);
    
\foreach \tail/\head in
    {32/22, 32/30,32/2,32/8,32/31,32/33,32/23,32/20,32/29,32/18,32/14,32/15, 
     1/3,1/7,1/13,1/17,1/19,1/21,1/30,
     3/7,3/12,3/13,
     8/30,
     23/25,23/27,23/29,
     24/25,24/27,
     26/29,
     1/2, 2/3,2/7,2/8,2/9,2/13,2/27,2/28,
     31/28,31/24,31/25,
     33/19, 33/28,33/31,33/27,33/23,33/20,33/29,33/26,33/18,33/14,33/15,33/32,33/22,33/9,33/30,33/8}
    \path[draw=red!80!black] (\tail) edge (\head);

\foreach \tail/\head in
    {4/6,4/10,
     5/6,5/10,5/16,6/16}
    \path[draw=blue!80!black] (\tail) edge (\head);

\end{tikzpicture} 
    \subcaption{$k = 3$ \label{subfig:karatek3}}
\end{subfigure}
\begin{subfigure}{0.32\textwidth}
    \centering
    \begin{tikzpicture}[
  scale=.4,
  transform shape,
  >=stealth',
  auto,
  thick,
  %
  sty1/.style={
    circle,
    minimum size=5ex,
    inner sep=0pt,
    fill=gray!15,
    draw=gray!70,
    thick,
    dashed,
    font=\sffamily\bfseries,
  },
  %
  sty2/.style={
    diamond,
    minimum size=6ex,
    inner sep=0pt,
    fill=blue!35,
    draw=blue!80!black,
    thick,
    font=\sffamily\bfseries,
  },
  sty3/.style={
    rectangle,
    minimum width=4.5ex,
    minimum height=4.5ex,
    rounded corners=2pt,
    fill=red!35,
    draw=red!80!black,
    thick,
    font=\sffamily\bfseries,
  },
  sty4/.style={
    regular polygon,
    regular polygon sides=5,
    minimum size=6ex,
    inner sep=0pt,
    fill=yellow!40,
    draw=yellow!70!black,
    thick,
    font=\sffamily\bfseries,
  },
  sty5/.style={
    star,
    star points=5,
    minimum size=5ex,
    inner sep=0pt,
    fill=green!35,
    draw=green!60!black,
    thick,
    font=\sffamily\bfseries,
  },
  sty6/.style={
    regular polygon,
    regular polygon sides=6,
    minimum size=5ex,
    inner sep=0pt,
    fill=violet!35,
    draw=violet!70!black,
    thick,
    font=\sffamily\bfseries,
  },
  sty7/.style={
    regular polygon,
    regular polygon sides=8,
    minimum size=5ex,
    inner sep=0pt,
    fill=orange!40,
    draw=orange!80!black,
    thick,
    font=\sffamily\bfseries,
  },
  sty8/.style={
    circle,
    minimum size=5ex,
    inner sep=0pt,
    fill=cyan!30,
    draw=cyan!80!black,
    thick,
    font=\sffamily\bfseries,
  },
  sty9/.style={
    rectangle,
    minimum width=4.2ex,
    minimum height=4.2ex,
    inner sep=0pt,
    fill=magenta!30,
    draw=magenta!75!black,
    double,
    semithick,
    font=\sffamily\bfseries,
  },
  sty10/.style={
    regular polygon,
    regular polygon sides=3,
    minimum size=4.4ex,
    inner sep=0pt,
    fill=red!25!orange!45,
    draw=red!80!black,
    thick,
    font=\sffamily\bfseries,
  },
  sty11/.style={
    ellipse,
    minimum width=5.5ex,
    minimum height=4ex,
    inner sep=0pt,
    fill=teal!25,
    draw=teal!80!black,
    thick,
    font=\sffamily\bfseries,
  }
]

\def\styleList{{
    "sty1", 
    "sty1", 
    "sty3", 
    "sty3", 
    "sty2", 
    "sty2", 
    "sty2", 
    "sty3", 
    "sty3", 
    "sty3", 
    "sty2", 
    "sty11", 
    "sty3", 
    "sty3", 
    "sty3", 
    "sty3", 
    "sty2", 
    "sty7", 
    "sty3", 
    "sty3", 
    "sty3", 
    "sty4", 
    "sty3", 
    "sty3", 
    "sty3", 
    "sty3", 
    "sty3", 
    "sty3", 
    "sty3", 
    "sty3", 
    "sty3", 
    "sty3", 
    "sty3", 
    "sty3"  
}}
                 
\foreach [count=\i] \name/\vtx/\x/\y in
   { 1/0/15.5211316596282/41.0925504659646,
     2/1/14.6390815702147/33.7269542345312,
     3/2/18.2112847875881/27.1660631628311,
     4/3/2.91137130520034/31.1035010896308,
     5/4/20.6221982850349/56.0684161971344,
     6/5/10.9027663935224/53.9950027520863,
     7/6/14.4855568384822/57.9297127520071,
     8/7/6.17121144000743/37.298845170394,
     9/8/23.4773971576921/22.4742150655837,
    10/9/9.82770988134235/15.6136283527571,
    11/10/23.086699286335/51.2128793953601,
    12/11/29.2322906856967/48.4193234425546,
    13/12/0.55/39.7024040650424,
    14/13/8.05597879799012/26.2501984343514,
    15/14/22.9907261607264/0.55,
    16/15/18.122707203845/2.33663683106423,
    17/16/9.76086350345417/61.162177007488,
    18/17/25.3425174790231/41.883373696669,
    19/18/29.0523816114879/1.25228807736234,
    20/19/23.9690771969756/30.850916135437,
    21/20/41.9960199044359/10.4921075296759,
    22/21/6.38761511368505/44.9441642578604,
    23/22/13.8330379043146/11.58374465326659,
    24/23/38.9875226611639/16.2165693313103,
    25/24/42.4177271499389/27.5322772135241,
    26/25/43.8725928142509/22.0754523644347,
    27/26/34.2769364671048/2.39168442379882,
    28/27/36.0924435922861/21.4727181445174,
    29/28/28.6887912037747/27.4400642389826,
    30/29/38.0987050326266/6.33523851750264,
    31/30/15.6740692174605/19.0960440954581,
    32/31/34.3654379473733/26.7049121440653,
    33/32/21.1954250619251/10.84851085183699,
    34/33/31.8965595144377/17.0453513984002    }
{
    {
    \pgfmathparse{\styleList[\i-1]} 
    \let\stileCorrente\pgfmathresult
    \node[\stileCorrente] (\vtx) at ($(0,0)!0.25!(\x,\y)$) {\small\name};
}
}

\foreach \tail/\head in
    {0/1,0/3,0/4,0/5,0/6,0/7,0/8,0/10,0/11,0/12,0/13,0/17,0/19,0/21,0/31,
    1/2, 1/3,1/7,1/13,1/17,1/19,1/21,1/30}
    \path[dashed, draw=gray!50] (\tail) edge (\head);

\foreach \tail/\head in
    {4/6,4/10,
     5/6,5/10,5/16,6/16}
    \path[draw=blue!80!black] (\tail) edge (\head);

\foreach \tail/\head in
    {3/7, 3/12, 3/13,
     32/22, 32/30,32/2,32/8,32/31,32/33,32/23,32/20,32/29,32/18,32/14,32/15,
     8/30,
     23/25,23/27,23/29,
     24/25,24/27,
     26/29,
     2/3,2/7,2/8,2/9,2/13,2/27,2/28,
     31/28,31/24,31/25,
     33/19, 33/28,33/31,33/27,33/23,33/20,33/29,33/26,33/18,33/14,33/15,33/32,33/22,33/9,33/30,33/8}
    \path[draw=red!80!black] (\tail) edge (\head);

\end{tikzpicture} 
    \subcaption{$k = 5$ \label{subfig:karatek5}}
\end{subfigure}
\begin{subfigure}{0.32\textwidth}
    \centering
    \begin{tikzpicture}[
  scale=.4,
  transform shape,
  >=stealth',
  auto,
  thick,
  %
  sty1/.style={
    circle,
    minimum size=5ex,
    inner sep=0pt,
    fill=gray!15,
    draw=gray!70,
    thick,
    dashed,
    font=\sffamily\bfseries,
  },
  %
  sty2/.style={
    diamond,
    minimum size=6ex,
    inner sep=0pt,
    fill=blue!35,
    draw=blue!80!black,
    thick,
    font=\sffamily\bfseries,
  },
  sty3/.style={
    rectangle,
    minimum width=4.5ex,
    minimum height=4.5ex,
    rounded corners=2pt,
    fill=red!35,
    draw=red!80!black,
    thick,
    font=\sffamily\bfseries,
  },
  sty4/.style={
    regular polygon,
    regular polygon sides=5,
    minimum size=6ex,
    inner sep=0pt,
    fill=yellow!40,
    draw=yellow!70!black,
    thick,
    font=\sffamily\bfseries,
  },
  sty5/.style={
    star,
    star points=5,
    minimum size=5ex,
    inner sep=0pt,
    fill=green!35,
    draw=green!60!black,
    thick,
    font=\sffamily\bfseries,
  },
  sty6/.style={
    regular polygon,
    regular polygon sides=6,
    minimum size=5ex,
    inner sep=0pt,
    fill=violet!35,
    draw=violet!70!black,
    thick,
    font=\sffamily\bfseries,
  },
  sty7/.style={
    regular polygon,
    regular polygon sides=8,
    minimum size=5ex,
    inner sep=0pt,
    fill=orange!40,
    draw=orange!80!black,
    thick,
    font=\sffamily\bfseries,
  },
  sty8/.style={
    circle,
    minimum size=5ex,
    inner sep=0pt,
    fill=cyan!30,
    draw=cyan!80!black,
    thick,
    font=\sffamily\bfseries,
  },
  sty9/.style={
    rectangle,
    minimum width=4.2ex,
    minimum height=4.2ex,
    inner sep=0pt,
    fill=magenta!30,
    draw=magenta!75!black,
    double,
    semithick,
    font=\sffamily\bfseries,
  },
  sty10/.style={
    regular polygon,
    regular polygon sides=3,
    minimum size=4.4ex,
    inner sep=0pt,
    fill=red!25!orange!45,
    draw=red!80!black,
    thick,
    font=\sffamily\bfseries,
  },
  sty11/.style={
    ellipse,
    minimum width=5.5ex,
    minimum height=4ex,
    inner sep=0pt,
    fill=teal!25,
    draw=teal!80!black,
    thick,
    font=\sffamily\bfseries,
  }
]


\def\styleList{{
    "sty1", 
    "sty4", 
    "sty1", 
    "sty4", 
    "sty2", 
    "sty2", 
    "sty2", 
    "sty4", 
    "sty4", 
    "sty10", 
    "sty2", 
    "sty11", 
    "sty4", 
    "sty4", 
    "sty7", 
    "sty8", 
    "sty2", 
    "sty4", 
    "sty6", 
    "sty4", 
    "sty5", 
    "sty4", 
    "sty9", 
    "sty3", 
    "sty3", 
    "sty3", 
    "sty3", 
    "sty3", 
    "sty3", 
    "sty3", 
    "sty4", 
    "sty3", 
    "sty1", 
    "sty1"  
}}
                 
\foreach [count=\i] \name/\vtx/\x/\y in
   { 1/0/15.5211316596282/41.0925504659646,
     2/1/14.6390815702147/33.7269542345312,
     3/2/18.2112847875881/27.1660631628311,
     4/3/2.91137130520034/31.1035010896308,
     5/4/20.6221982850349/56.0684161971344,
     6/5/10.9027663935224/53.9950027520863,
     7/6/14.4855568384822/57.9297127520071,
     8/7/6.17121144000743/37.298845170394,
     9/8/23.4773971576921/22.4742150655837,
    10/9/9.82770988134235/15.6136283527571,
    11/10/23.086699286335/51.2128793953601,
    12/11/29.2322906856967/48.4193234425546,
    13/12/0.55/39.7024040650424,
    14/13/8.05597879799012/26.2501984343514,
    15/14/22.9907261607264/0.55,
    16/15/18.122707203845/2.33663683106423,
    17/16/9.76086350345417/61.162177007488,
    18/17/25.3425174790231/41.883373696669,
    19/18/29.0523816114879/1.25228807736234,
    20/19/23.9690771969756/30.850916135437,
    21/20/41.9960199044359/10.4921075296759,
    22/21/6.38761511368505/44.9441642578604,
    23/22/13.8330379043146/11.58374465326659,
    24/23/38.9875226611639/16.2165693313103,
    25/24/42.4177271499389/27.5322772135241,
    26/25/43.8725928142509/22.0754523644347,
    27/26/34.2769364671048/2.39168442379882,
    28/27/36.0924435922861/21.4727181445174,
    29/28/28.6887912037747/27.4400642389826,
    30/29/38.0987050326266/6.33523851750264,
    31/30/15.6740692174605/19.0960440954581,
    32/31/34.3654379473733/26.7049121440653,
    33/32/21.1954250619251/10.84851085183699,
    34/33/31.8965595144377/17.0453513984002    }
{
    {
    \pgfmathparse{\styleList[\i-1]} 
    \let\stileCorrente\pgfmathresult
    \node[\stileCorrente] (\vtx) at ($(0,0)!0.25!(\x,\y)$) {\small\name};
}
}

          \foreach \tail/\head in
    {32/22, 32/30,32/2,32/8,32/31,32/33,32/23,32/20,32/29,32/18,32/14,32/15,
    0/1,0/3,0/4,0/5,0/6,0/7,0/8,0/10,0/11,0/12,0/13,0/17,0/19,0/21,0/31, 
    1/2, 2/3,2/7,2/8,2/9,2/13,2/27,2/28}
    \path[dashed,draw=gray!50] (\tail) edge (\head);  
    
       \foreach \tail/\head in
    {33/19, 33/28,33/31,33/27,33/23,33/20,33/29,33/26,33/18,33/14,33/15,33/32,33/22,33/9,33/30,33/8}
    \path[dashed,draw=gray!50] (\tail) edge (\head);

    \foreach \tail/\head in
    {1/3,1/7,1/13,1/17,1/19,1/21,1/30,
     3/7,3/12,3/13, 8/30}
    \path[draw=yellow!60!black] (\tail) edge (\head); 

    \foreach \tail/\head in
    {23/25,23/27,23/29,
     24/25,24/27,
     26/29,
     31/28,31/24,31/25}
    \path[draw=red!80!black] (\tail) edge (\head);

\foreach \tail/\head in
    {4/6,4/10, 5/6,5/10,5/16,6/16}
    \path[draw=blue!80!black] (\tail) edge (\head);

\end{tikzpicture} 
    \subcaption{$k = 10$ \label{subfig:karatek10}}
\end{subfigure}
\caption{Optimal $k$-vertex cuts (gray vertices with dashed incident edges) for Zachary’s karate club network with unit vertex costs, shown for $k \in \{3,5,10\}$. The colored vertices represent the connected components remaining in the graph.}
\label{fig:karate_example}
\end{figure}

As for the specific applications of the $k$-VCP, this problem plays a central role whenever the decision maker aims to disrupt communication, isolate subnetworks, or assess the robustness and structural vulnerability of complex systems; see e.g., \cite{FurLMP2020}.
The computational complexity of the $k$-VCP depends critically on the value of~$k$. 
When the deletion budget is taken as parameter, the decision version of the problem is 
W[1]-hard \citep{Marx}. 
If $k$ is part of the input, the problem is already $\mathcal{NP}$\nobreakdash-hard, 
as follows from the hardness of the $k$-way vertex cut problem \citep{BGZ14}, 
whose decision version coincides with the decision version of the $k$-VCP. 
Moreover, even when $k$ is fixed, the problem remains $\mathcal{NP}$\nobreakdash-hard 
for every $k \ge 3$ \citep{CFLMMM18}.
On the other hand, the case $k = 2$ is polynomially solvable, as it reduces to 
computing $O(|\mathcal V|^2)$ minimum $s$--$t$ cuts via vertex-splitting 
over all pairs of vertices, which in the unit-cost case coincides with 
computing the vertex connectivity of the graph.

Three main approaches currently represent the most effective exact methods for solving the $k$-VCP. 
A first, off-the-shelf option consists in using a general-purpose \textit{integer linear programming} (ILP) solver applied to a compact ILP formulation with a polynomial number of variables and constraints, as proposed in~\cite{CFLMMM18}. 
Beyond this generic strategy, two dedicated exact algorithms have been introduced in the literature: 
a \textit{branch-and-price} (BP) algorithm~\citep{CFLMMM18}, based on an ILP formulation with an exponential number of variables, 
and a \textit{branch-and-cut} (BC) algorithm~\citep{FurLMP2020}, based on an ILP formulation with an exponential number of constraints.
Although these exact methods are quite effective, they still fail to solve many benchmark instances to optimality. 
Motivated by these limitations, the main methodological contribution of this paper is to introduce a new exact algorithm for the $k$-VCP that markedly improves upon the BP approach and enables the solution of significantly larger and more challenging instances.

\subsection{ILP models for the \texorpdfstring{$k$}{k}-VCP from the literature}
\label{sec:lit}

In this section we recall the three ILP formulations that have been proposed in the literature for the $k$-VCP.
For each model, we outline its modeling structure and the exact solution method originally associated with it.
These models, together with their corresponding algorithms, will be included in the computational experiments of this article and used as benchmarks against the new BP algorithm proposed in this paper.

\subsubsection{The compact ILP model}
\label{sec:comp}

Let $\mathcal{K}$ denote the set of cluster indices, namely $\{1,2,\dots, k\}$. For each vertex $v \in \vertices$ and each index $i \in \mathcal{K}$, let $y_{iv}$ be a binary variable taking value $1$ if~$v$ is assigned to the $i$-th cluster of the graph and value $0$ otherwise.
Vertices assigned to at least one cluster are retained in the graph, whereas vertices 
not assigned to any cluster belong to the set $\vertices_0$ and form the 
$k$-vertex cut.
Using these variables,
the \textit{compact} ILP formulation for the $k$-VCP, introduced by \cite{CFLMMM18} and 
denoted by \ILPcomp, reads as follows:
\begin{subequations}
\begin{align}
\label{VKSP_CF_OBJ}  \text{(\ILPcomp)}  & & \max_{{\boldsymbol{y} \in \{0,1\}^{\mathcal K \times \vertices}}}  \sum_{i\in \mathcal K}\sum_{v\in \mathcal V} c_v \, y_{iv} \\[1 ex]
\label{VKSP_CF_1}  & &  \sum_{i \in \mathcal K} y_{iv} &\le 1, & v\in \mathcal V,\\[1 ex]
\label{VKSP_CF_2_bis}& & y_{iu} + \sum_{j\in \mathcal K \setminus \{i\}} y_{jv} & \leq 1, &   i \in \mathcal K, \, \{u,v\} \in \mathcal E, \\[1 ex]
\label{VKSP_CF_3}   & &  \sum_{v \in \mathcal V} y_{iv} & \ge 1, & i \in \mathcal K. 
\end{align}
\end{subequations}
This formulation is closely related to the one by \cite{BFM98} for the $k$-separator 
problem and uses the same assignment variables. 
The objective function~\eqref{VKSP_CF_OBJ} maximizes the total {cost} of vertices assigned to the clusters of the graph. {Throughout the paper, bold symbols below the optimization operator indicate the decision variables of the model together with their domains.}
Constraints~\eqref{VKSP_CF_1} ensure that each vertex is assigned to at most one 
cluster.  
Constraints~\eqref{VKSP_CF_2_bis} prevent adjacent vertices from being assigned to 
different clusters, thereby ensuring mutual disconnection of the clusters.  
Constraints~\eqref{VKSP_CF_3} require each cluster to be non-empty, enforcing the 
presence of at least $k$ connected components.
The compact formulation involves a polynomial number of binary variables and 
constraints, and can therefore be solved directly by a general-purpose ILP solver.

\subsubsection{The extended ILP model}
\label{sec:ext}

Let $\potatoes {\subseteq 2^{\vertices}}$ be the family of all non-empty subsets of vertices, where $2^\vertices$ denotes the power set of $\vertices$. {Hence, $\potatoes = \{ S \subseteq \vertices : S \neq \emptyset \}$}.
Furthermore, for any 
$U \subseteq \vertices$, let~$\potatoes(U) = \{ S \in \potatoes : S \cap U \neq \emptyset \}$
denote the subsets in $\potatoes$ that contain at least one vertex of $U$.
For singleton sets~$\{v\}$, $v \in \vertices$, we write~$\potatoes(v)$ instead of~$\potatoes(\{v\})$, i.e., $\potatoes(v) = \{ S \in \potatoes : v \in S \}$ denotes the subsets in $\potatoes$ that contain vertex $v$.
For each subset $S \in \potatoes$, let $\potvar_S$ be a binary variable taking value $1$ if $S$ is selected as one of the $k$ clusters of the solution and value $0$ otherwise.
Denote by~$\cliques$ the family of all cliques in~$\graph$.
Let $\mathscr{C}' \subseteq \cliques$ be an \textit{edge-covering} family of cliques of $\graph$, 
that is, a family such that for every edge $\{u,v\} \in \edges$ there exists at least one 
clique~$C \in \mathscr{C}'$ containing both $u$ and $v$. Using the~$\potvarvec$-variables, the \textit{extended} ILP formulation for the $k$-VCP, introduced by 
\cite{CFLMMM18} and denoted by \oldBP, reads as follows:
\begin{subequations}
  \label{eq:oldbp}
  \begin{align}
    \text{(\oldBP)}   
        & & \max_{{\potvarvec \in \{0,1\}^{\potatoes}}} \sum_{S \in \potatoes} \bigg( \sum_{v \in S} c_v \bigg)  \, \potvar_S    
        & \label{eq:EF_obj} \\[1ex]
    & &  \sum_{S \in \potatoes} \potvar_S 
        & = k, 
        \label{eq:EF_3} \\[1ex]
    & &  \sum_{S \in \potatoes(v)} \potvar_S 
        & \le 1, 
        & v \in \vertices,
        \label{eq:EF_1} \\[1ex]
    & & \sum_{S \in \potatoes(C)} \potvar_S
        & \le 1, 
        & C \in \mathscr{C}',
        \label{eq:EF_2}
  \end{align}
\end{subequations}
The objective function~\eqref{eq:EF_obj} maximizes the total cost of the $k$ clusters 
selected in the solution. It generalizes the original unweighted version—which 
maximizes the cardinality $|S|$ of each chosen subset—by incorporating vertex 
costs.
Constraint~\eqref{eq:EF_3} ensures that exactly $k$ subsets are selected, and thus 
that the solution consists of $k$ clusters. 
Constraints~\eqref{eq:EF_1} require that each vertex $v \in \vertices$ appears in 
at most one selected subset, guaranteeing that the chosen subsets are vertex-disjoint 
and therefore represent distinct clusters. 
Constraints~\eqref{eq:EF_2} enforce that the selected clusters are mutually 
disconnected. Since $\mathscr{C}'$ is an edge-covering family of cliques, requiring 
that at most one subset in $\potatoes(C)$ can be selected for each $C \in \mathscr{C}'$ 
ensures that no two chosen subsets contain adjacent vertices. Consequently, the $k$ 
selected clusters of vertices are pairwise disconnected in the graph.
{Figure~\ref{fig:ef2_example} illustrates the logic of  Constraints~\eqref{eq:EF_2} using a small graph with six vertices and an edge-covering family of cliques given by its edges. As an example, consider the 
clique $C=\{b,e\}$ and two potential clusters $S_1=\{a,b,c\}$ and $S_2=\{d,e,f\}$. Since $S_1 \cap C=\{b\}$ and~$S_2 \cap C=\{e\}$, both clusters 
belong to~$\potatoes(C)$. Therefore, Constraint~\eqref{eq:EF_2} associated with $C$ prevents the variables corresponding to $S_1$ and $S_2$ from both taking 
value~$1$. Indeed, selecting both clusters would violate the requirement that the selected clusters be pairwise disconnected.} In \cite{CFLMMM18}, a family of cliques of polynomial size with respect to the number of edges of the graph is considered.

\begin{figure}[htbp]
\centering
\begin{tikzpicture}[scale=0.9]
  \tikzstyle{v} += [circle,draw=black,thick,inner sep=2pt,minimum size=2mm];

  \def\scale{2}

  \coordinate (A) at (0,1.05);
  \coordinate (C) at (0,-1.05);
  \coordinate (B) at (1.65,0);
  \coordinate (E) at (4.15,0);
  \coordinate (D) at (5.8,1.05);
  \coordinate (F) at (5.8,-1.05);

  \coordinate (G1) at (barycentric cs:A=1,B=1,C=1);
  \coordinate (G2) at (barycentric cs:D=1,E=1,F=1);

  \draw[fill=blue!30, opacity=0.5, rounded corners=20pt] 
    ($(G1)!\scale!(A)$) -- ($(G1)!\scale!(B)$) -- ($(G1)!\scale!(C)$) -- cycle;

  \draw[fill=red!30, opacity=0.5, rounded corners=20pt] 
    ($(G2)!\scale!(D)$) -- ($(G2)!\scale!(E)$) -- ($(G2)!\scale!(F)$) -- cycle;

  \node (a) at (A) [v,label=below:{\scriptsize $a$}] {};
  \node (c) at (C) [v,label=below:{\scriptsize $c$}] {};
  \node (b) at (B) [v,label=below:{\scriptsize $b$}] {};
  \node (e) at (E) [v,label=below:{\scriptsize $e$}] {};
  \node (d) at (D) [v,label=below:{\scriptsize $d$}] {};
  \node (f) at (F) [v,label=below:{\scriptsize $f$}] {};

  \draw[-,thick] (a) -- (b) -- (c);
  \draw[-,thick] (b) -- (e);
  \draw[-,thick] (d) -- (e) -- (f);

  \node[blue] at (0.65,0) {\scriptsize $S_1$};
  \node[red] at (5.15,0) {\scriptsize $S_2$};
\end{tikzpicture}
\caption{{Illustration of Constraint~\eqref{eq:EF_2}. The candidate clusters $S_1=\{a,b,c\}$ (blue) and $S_2=\{d,e,f\}$ (red) both intersect the clique $C=\{b,e\}$, so Constraint~\eqref{eq:EF_2} forbids selecting them simultaneously.}}
\label{fig:ef2_example}
\end{figure}

The extended formulation \oldBP{} contains an exponential number of variables. 
As a consequence, solving the model requires a \textit{column generation} (CG) 
approach embedded within a BP algorithm. At each node of the 
branching tree, the LP relaxation of Formulation~\eqref{eq:oldbp} is solved through 
a CG procedure, where new variables are generated by solving the so-called \textit{pricing 
problem} (PP). We refer the interested reader to~\citet{BarnhartJNSV98,LubbeckeD05} 
for further details on CG and BP algorithms, as well as to the book by 
\citet{DesaulniersDesrosiersSolomon2005} and the two recent monographs by~\citet{G-2024-36} and~\citet{uchoa2024optimizing}. 
In what follows, we briefly describe the two principal components of the BP algorithm of 
\cite{CFLMMM18}: the pricing problem  and the 
branching scheme.

At the root node of the BP algorithm, \citet{CFLMMM18} show that the pricing problem can be solved via solving a sequence of minimum cut problems on an auxiliary network, enabling the use of efficient max-flow/min-cut algorithms.
Although the pricing problem is originally described for unit vertex costs, the same approach extends naturally to the weighted setting by simply adjusting the vertex profits in the reduced-cost computation, and both the complexity and the solution methods remain unchanged.
We refer the reader to Section~\ref{sec:priceRMP} for further details.

A dedicated branching scheme is proposed in \citet{CFLMMM18}, operating on two
levels. At the first level, a ``partially'' assigned vertex~$v \in \vertices$ is identified,
namely one for which the sum of the variables over subsets containing~$v$ is
fractional. Two branches are created: one forcing~$v$ into the vertex cut and the
other forcing~$v$ to belong to exactly one cluster. This branching preserves the
structure of the pricing problem: in the first branch,~$v$ is simply forbidden in
newly generated subsets, while in the second the pricing formulation is unchanged.
Since this is not sufficient to guarantee integrality, a second branching
level, akin to Ryan-Foster branching~\citep{RyanFoster81}, is applied to vertex pairs. Here, two
vertices~$u \in \vertices$ and~$v \in \vertices$ outside the vertex cut that are ``partially'' assigned to the
same cluster (i.e., the sum of variables over subsets containing both is
fractional) trigger two additional branches: enforcing that they belong to the same
cluster or to different clusters.
In the
``same-cluster'' branch, the pricing problem is preserved by contracting
$\{u,v\}$ into a single supervertex, whereas in the ``different-clusters'' branch
an incompatibility constraint forbidding subsets containing both vertices must be
enforced. The latter destroys the min-cut structure of the pricing problem, which
must then be solved using an ILP formulation.

\subsubsection{The natural ILP model}
\label{sec:nat}

A \emph{subgraph} of $\graph$ is any graph 
$\subgraph=(\SUBvertices,\SUBedges)$ such that 
$\SUBvertices\subseteq \vertices$ and $\SUBedges\subseteq \edges$, with every edge 
of $\SUBedges$ having both endpoints in $\SUBvertices$.
For any subgraph $\subgraph$ of $\graph$, we denote by $\deg_{\subgraph}(v)$ the degree 
of vertex $v$ in $\subgraph$, i.e., the number of edges of $\subgraph$ incident to $v$.
A subgraph~$\subgraph$ is \emph{spanning} if $\SUBvertices=\vertices$, and it is 
\emph{cycle-free} if it contains no cycles (equivalently, if $\subgraph$ is a \textit{forest}).
We denote by $\SUBgraphs$ the family of all \emph{spanning cycle-free subgraphs} 
of $\graph$, that is,
\[
\SUBgraphs = \{\, \subgraph = (\vertices, \SUBedges) : \subgraph \text{ is acyclic} \,\}.
\]
For each vertex $v \in \vertices$, we introduce a binary variable $x_v$ that takes value~1 
if $v$ belongs to the $k$-vertex cut and~0 otherwise.
Using the~$\bx$-variables,  
the \textit{natural} ILP formulation for the $k$-VCP, introduced by \cite{FurLMP2020} 
and denoted by \ILPnat, reads as follows:
\begin{subequations}
\label{BILEVEL}
\begin{align}
\label{INT_R_OBJ}
\text{(\ILPnat)}
& & \min_{{\bx \in \{0,1\}^{\vertices}}} ~~\sum_{v \in \vertices} {c_v} \, x_v \\[1ex]
\label{INT_R_0}  
& &  \sum_{v \in \vertices} \big( \deg_{\subgraph}(v) - 1 \big) x_v 
    & \ge k - |\vertices| + |\SUBedges|, 
    & \subgraph \in \SUBgraphs.
\end{align}
\end{subequations}
The objective function~\eqref{INT_R_OBJ} minimizes the total weight of the vertices
removed in the $k$-vertex cut. Constraints~\eqref{INT_R_0} form an exponential family
of inequalities derived from a bilevel interpretation of the $k$-VCP and from the key
property that a graph has at least $k$ connected components if and only if every
acyclic {spanning} subgraph contains at most $|\vertices|-k$ edges. We refer to
\cite{FurLMP2020} for further details. 

The natural formulation \ILPnat{} contains an exponential number of constraints.
As a consequence, solving the model requires a \textit{cut generation} approach
embedded within a branch-and-cut algorithm. At each node of the branching tree,
the LP relaxation of Formulation~\eqref{BILEVEL} is solved by iteratively
generating violated inequalities from the family~\eqref{INT_R_0}. These cuts are
produced by solving a separation problem that searches for a maximum-weight
acyclic subgraph of the current residual graph. As shown by~\citet{FurLMP2020},
this separation can be performed in polynomial time for both integer and
fractional solutions, allowing the exponential family of constraints to be
added only when violated.

\subsection{Paper outline and contributions}

This section reviews the main contributions of our work and specifies the sections in which they are presented.
Section~\ref{sec:NewForm} presents our main theoretical contribution: a new extended ILP formulation that unifies and extends both the natural {formulation} and the previous extended formulation by simultaneously making use of the binary variables from both models.
Crucially, we prove that integrality needs to be enforced only on the variables inherited from the natural formulation, whereas the variables coming from the extended formulation can be safely relaxed to continuous without affecting optimality—an essential property that greatly simplifies the design of the new branch-and-price algorithm.
The \textit{linear programming} (LP) relaxation of the new extended model strictly dominates the LP relaxations of both classical models, thereby providing a substantially tighter foundation for exact solution methods.
Building on the new formulation, Section~\ref{sec:ALGO} introduces our main methodological contributions, namely a branch-and-price algorithm that incorporates a tailored column generation procedure, a branching strategy fully compatible with {the} pricing problem, and dedicated symmetry-handling and primal-heuristic components that improve the computational performance.
Section~\ref{sec:COMP} reports computational results on standard benchmark instances from the literature, including comparisons with state-of-the-art exact methods and ablation tests designed to assess the contribution of each main component of our new approach.
The experiments show that our method establishes a new state-of-the-art for the $k$-VCP, outperforming all existing exact algorithms and solving 73 open instances within the time limit of one hour.
Finally, Section~\ref{sec:CONCL} summarizes the contributions of the paper and outlines directions for future research.

\section{A new extended ILP model for the \texorpdfstring{$k$}{k}-VCP}
\label{sec:NewForm}

In this section, we introduce a new extended ILP formulation for the $k$-VCP that
simultaneously exploits the vertex-cut variables $\bx \in \B{\vertices}$ of
\ILPnat{} and the cluster-selection variables $\potvarvec \in \B{\potatoes}$ of
\oldBP. By combining the strengths of both formulations within a unified model,
we obtain an enhanced representation of the $k$-VCP given by \zigbase below.
We will show, through a detailed theoretical study, that the variables inherited from \oldBP can in fact be treated as continuous.
The new model therefore does not only shed light on hidden properties of \oldBP, but has also important algorithmic consequences for solving the~$k$-VCP as we will discuss in detail in Section~\ref{sec:ALGO}. As in \ILPnat, variable~$x_v$ indicates whether vertex~$v \in \vertices$ is contained in the vertex cut, and, analogously to \oldBP, variable~$\potvar_S$ indicates whether~$S \in \potatoes$ forms one of the~$k$ mutually disconnected clusters of the remaining graph.
Using these variables,
the new \textit{extended} ILP formulation for the $k$-VCP  
and denoted by \zigbase, reads as follows:
\begin{subequations}
    \label{eq:zig-base}
        \begin{align}
          \label{eq:zig-base:obj}\text{(\zigbase)} & & \min_{{\substack{\bx \in \{0,1\}^{\vertices}\\ \potvarvec \in \{0,1\}^{\potatoes}}}} ~~ \sum_{v \in \vertices} c_v \, x_v &&&\\[1 ex]
          & & \sum_{S \in \potatoes} \potvar_S &= k, &&\label{eq:zig-base:cardinality}\\[1 ex]
          & &  \sum_{S \in \potatoes(v)} \potvar_S  + x_v&= 1, && v \in \vertices,\label{eq:zig-base:vertex_cover}\\[1 ex]
          & & \sum_{S \in \potatoes(C)} \potvar_S &\leq 1, && C \in \cliques.\label{eq:zig-base:clique_constrs}
        \end{align}
\end{subequations}
The objective function \eqref{eq:zig-base:obj} coincides with the one of \ILPnat and minimizes the total cost of the the selected $k$-vertex cut.
Constraints~\eqref{eq:zig-base:cardinality} and~\eqref{eq:zig-base:clique_constrs} serve the same purpose as in \oldBP, i.e., they guarantee to select exactly~$k$ mutually disconnected clusters and enforce adjacent vertices in the remaining graph to be contained in the same cluster, respectively. {Nonetheless, unlike the formulation of \citet{CFLMMM18}, where the corresponding constraints are imposed on an edge-covering family of cliques, Constraints~\eqref{eq:zig-base:clique_constrs} are defined over the whole clique family~$\cliques$. This yields a stronger formulation, but also introduces an exponential number of inequalities. Therefore, these constraints require a tailored separation procedure, described in Section~\ref{sec:sepaRMP}. Furthermore, it is worth noticing that it is sufficient to define Constraints~\eqref{eq:zig-base:clique_constrs} only over the family of maximal cliques of~$\graph$. Indeed, if $C_1 \subseteq C_2$ for two cliques $C_1,C_2 \in \cliques$, then $\potatoes(C_1) \subseteq \potatoes(C_2)$, and therefore the constraint associated with $C_1$ is dominated by the one associated with $C_2$. 
}
To explain Constraints~\eqref{eq:zig-base:vertex_cover}, observe that~\eqref{eq:EF_1} in \oldBP takes left-hand side value~1 if~$v \in \vertices$ is not removed from the graph and of~0 otherwise.
That is, $x_v$ in~\eqref{eq:zig-base:vertex_cover} can be interpreted as a slack variable in Constraint~\eqref{eq:EF_1}.
The correctness of \zigbase immediately follows from the correctness of \oldBP.

We now discuss the main theoretical contribution of this section. {Notice the domain of the $\potvarvec$-variables.}
\begin{theorem}
  \label{thm:zig-basecont}
  Let~$\graph = (\vertices,\edges)$ be an undirected graph, let~$\bc \in \R^{\vertices}$, and let~$k$ be a positive integer.
  Then,
  \begin{align}
    \label{eq:zig-basecont}
    \min_{{\substack{\bx \in \B{\vertices}\\ \potvarvec \ge \boldsymbol{0}}}}\left\{ \sum_{v \in \vertices} c_vx_v : \, {\text{\eqref{eq:zig-base:cardinality}, \eqref{eq:zig-base:vertex_cover}, and~\eqref{eq:zig-base:clique_constrs}}}\right\}
  \end{align}
  is a correct model of the~$k$-vertex cut problem.
  That is, for every optimal solution~$(\bx,\potvarvec)$ of~\eqref{eq:zig-basecont}, the set of vertices~$\{v \in \vertices : x_v = 1\}$ is an optimal solution of the~$k$-vertex cut problem.
\end{theorem}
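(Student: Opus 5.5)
The plan is to show that, for every fixed binary vector $\bx \in \B{\vertices}$, the continuous system \eqref{eq:zig-base:cardinality}, \eqref{eq:zig-base:vertex_cover}, \eqref{eq:zig-base:clique_constrs} with $\potvarvec \ge \boldsymbol{0}$ is feasible if and only if $\vertices_0 \define \{v : x_v = 1\}$ is a $k$-vertex cut. Since the objective depends only on $\bx$, this gives the claim: the feasible $\bx$-projections of \eqref{eq:zig-basecont} and of \zigbase coincide, and \zigbase is correct.

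\emph{Sufficiency.} If $\vertices_0$ is a $k$-vertex cut, take clusters $\vertices_1,\dots,\vertices_k$ as in the structural characterization and set $\potvar_{\vertices_i}=1$, with all other components zero. This is the integer solution already used for the correctness of \zigbase, so it satisfies all constraints.

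\emph{Necessity.} This is the main step. Fix a feasible $(\bx,\potvarvec)$ and let $R = \vertices\setminus\vertices_0$. By \eqref{eq:zig-base:vertex_cover}, every $S$ with $\potvar_S>0$ lies inside $R$ and $\sum_{S\ni v}\potvar_S = 1$ for each $v\in R$. Next, for any edge $\{u,v\}$ with $u,v\in R$, apply \eqref{eq:zig-base:clique_constrs} to the clique $C=\{u,v\}$:
\[
1 \ge \sum_{S\cap\{u,v\}\neq\emptyset}\potvar_S = \sum_{S\ni u}\potvar_S + \sum_{S\ni v}\potvar_S - \sum_{S\supseteq\{u,v\}}\potvar_S = 2 - \sum_{S\supseteq\{u,v\}}\potvar_S ,
\]
so $\sum_{S\supseteq\{u,v\}}\potvar_S \ge 1$. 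Because the total weight on sets containing $u$ is exactly $1$, every support set containing $u$ must also contain $v$, and vice versa. Propagating this along paths in $\graph[R]$ shows that every support set $S$ is a union of connected components of $\graph[R]$.

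Now let $K_1,\dots,K_m$ be the components of $\graph[R]$. For any component $K_j$, pick $v\in K_j$; then $\sum_{S\supseteq K_j}\potvar_S = \sum_{S\ni v}\potvar_S = 1$. Each support set $S$ is nonempty and contains at least one component, so
\[
k = \sum_S \potvar_S \le \sum_S \potvar_S \cdot \#\{j : K_j\subseteq S\} = \sum_{j=1}^m \sum_{S\supseteq K_j}\potvar_S = m .
\]
Hence $\graph[R]$ has at least $k$ components, and $\vertices_0$ is a $k$-vertex cut.

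Two points remain. Only edge cliques are used, so the argument also holds under any edge-covering family. If $\graph[R]$ has no edges, the propagation step is vacuous and each support set is simply a union of singleton components, so the counting bound still applies. The main obstacle is this necessity direction, and in particular the counting step that turns fractional weights into the integral bound $m \ge k$; the identity $\sum_{S\supseteq K_j}\potvar_S = 1$ is what makes that step work.
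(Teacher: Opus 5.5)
Your argument is correct, and it takes a genuinely different and more elementary route than the paper. The paper never reasons directly about feasible fractional $\potvarvec$. It starts from the bilevel model~\eqref{eq:bilevel} of \citet{FurLMP2020} and writes $\Phi(\bx)$ as an LP over the forest polytope via Edmonds' theorem (Lemma~\ref{lem:treeLP}). Dualizing gives the nonlinear single-level model~\eqref{eq:singlelevelNonlin}, in which $\potvarvec$ arises as the dual of the subtour constraints. The paper then exhibits one optimal solution with $\potvarvec$ equal to the component indicator and $\boldsymbol{\beta}=\bx$ (Theorem~\ref{thm:propertiesSolEF}), and adds the constraints of~\eqref{eq:zig-base} as valid (Corollary~\ref{cor:strengtheningZIG}). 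Finally it shows that~\eqref{eq:singlelevel:linking} and~\eqref{eq:singlelevel:dual} are implied by~\eqref{eq:zig-base:cardinality}--\eqref{eq:zig-base:clique_constrs}.

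In that route, the direction ``feasible $\Rightarrow$ $k$-vertex cut'' rests on weak LP duality. Unwound, it says that for a maximal spanning forest $F$ of $\graph[R]$, summing the edge inequalities $\sum_{S\supseteq e}\potvar_S\ge 1$ over $e\in F$ gives $|R|-m=|F|\le\sum_S|F[S]|\,\potvar_S\le\sum_S(|S|-1)\potvar_S=|R|-k$ (cf.\ Lemma~\ref{lem:auxstrength}). You replace this inequality chain with a structural fact: every support set is a union of components of $\graph[R]$, and each component is covered with total weight exactly one. Double counting then finishes.

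Your version gains the following:
\begin{itemize}
\item it is short and self-contained;
\item it avoids Edmonds' theorem, LP duality and the nonlinear intermediate model;
\item it describes \emph{every} feasible fractional $\potvarvec$, rather than exhibiting one convenient optimal solution.
\end{itemize}
The paper's version explains where the formulation comes from, namely dualizing the follower's LP, which the authors present as a general recipe for other bilevel partitioning models. Its intermediate inequalities are also reused to prove the LP dominance over the natural model in Theorem~\ref{thm:strength}.

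In your sufficiency step, one check deserves to be written out explicitly. A clique meets at most one cluster, because its vertices outside $\vertices_0$ are pairwise adjacent while distinct clusters are mutually nonadjacent.
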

In the remainder of this section, we prove Theorem~\ref{thm:zig-basecont} and compare the strength of the LP relaxations of~\ILPnat, \oldBP, and \zigbase.
\subsection{Partially relaxing integrality in \texorpdfstring{\zigbase}{ILPE2}}

\medskip
To prove Theorem~\ref{thm:zig-basecont}, we proceed in two steps.
First, we derive an alternative formulation of the~$k$-VCP that contains both the~$\bx$-variables and~$\potvarvec$-variables as well as additional variables.
An important feature of this model is that integrality is only assumed for the~$\bx$-variables.
Afterwards, we show that, although only the~$\bx$-variables are enforced to take integer values, there always exists an optimal solution in which all variables take integer values.
This fact is exploited by adding further cutting planes and projecting this model to~\eqref{eq:zig-basecont}.

\subsubsection{An alternative formulation}
\label{sec:alternativeForm}
To derive our alternative formulation, we make use of an observation by~\cite{FurLMP2020}.
Let~$\bx \in \B{\vertices}$ and let~$\tilde{\vertices} = \vertices \setminus \{v \in \vertices : x_v = 1\}$.
We define the reduced graph~$\graph(\bx) = (\tilde{\vertices}, \edges[\tilde{\vertices}]) = (\tilde{\vertices}, \tilde{\edges})$.
That is, $\graph(\bx)$ is the induced subgraph of~$\graph$ that arises by removing the vertices~$v \in \vertices$ with~$x_v = 1$.
Let~$\Phi(\bx)$ denote the maximum number of edges in a spanning forest of~$\graph(\bx)$.
Then, \citet{FurLMP2020} have shown that the~$k$-VCP can be modeled as
\begin{equation}
  \label{eq:bilevel}
  \min_{{\bx \in \B{\vertices}}} \left\{\sum_{v \in \vertices} c_v x_v : \Phi(\bx) \leq n - k - \sum_{v \in \vertices} x_v \right\}.
\end{equation}
Since~\eqref{eq:bilevel} involves the non-linear constraint~$\Phi(\bx) \leq n - k - \sum_{v \in \vertices} x_v$, Model~\eqref{eq:bilevel} cannot be solved by standard ILP solvers.
\citet{FurLMP2020} therefore linearize this constraint using cutting planes.
For deriving our alternative model, we instead express~$\Phi(\bx)$ as an LP model.

\begin{lemma}
  \label{lem:treeLP}
  Let~$\graph = (\vertices,\edges)$ be an undirected graph and let~$\bx \in \B{\vertices}$.
  Then,
  \begin{align}
    \Phi(\bx) = \max_{{\boldsymbol{y} \ge \boldsymbol{0}}} \, \sum_{e \in \edges} y_{e} &&&\notag\\[1 ex]
    y(S) &\leq \card{S} - 1, && S \subseteq \vertices,\; \card{S} \geq 2,\tag{$\potvar_S$}\\[1 ex]
    y_{e} &\leq 1 - x_u, && e=\{u,v\} \in \edges,\tag{$\beta_{eu}$}\\[1 ex]
    y_{e} &\leq 1 - x_v, && e=\{u,v\} \in \edges,\tag{$\beta_{ev}$}
  \end{align}
  where~$y(S)$ denotes~$\sum_{e \in \edges[S]} y_v$.
\end{lemma}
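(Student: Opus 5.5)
The plan is to reduce the statement to Edmonds' classical result that the forest polytope of a graph is described by nonnegativity together with the subtour-elimination inequalities $y(S) \le |S|-1$ for all $S \subseteq \vertices$ with $|S| \ge 2$. Once this is in place, the only task is to show that the additional constraints $y_e \le 1-x_u$ and $y_e \le 1-x_v$ restrict the feasible region to forests of the reduced graph $\graph(\bx)$.

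First I will analyze the effect of the $\beta$-constraints for a fixed binary $\bx$. If $x_u = 1$ for an endpoint $u$ of $e$, then $y_e \le 0$, and together with $y \ge \boldsymbol{0}$ this forces $y_e = 0$. If both $x_u = x_v = 0$, the constraints read $y_e \le 1$. These bounds are already implied by the subtour-elimination inequality with $S = \{u,v\}$, which gives $y_e \le 1$. Hence the feasible region equals the set of $y \ge \boldsymbol{0}$ satisfying all subtour-elimination inequalities and $y_e = 0$ for every $e \in \edges \setminus \tilde{\edges}$. This is the face $F$ of the forest polytope $P_{\mathrm{forest}}(\graph)$ obtained by intersecting it with the valid inequalities $y_e \ge 0$ at equality for those edges.

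Second, I will invoke Edmonds' theorem: the polytope $\{y \ge \boldsymbol{0} : y(S) \le |S|-1 \ \forall S \subseteq \vertices,\ |S|\ge 2\}$ is integral, with vertices exactly the incidence vectors of forests of $\graph$. A face of an integral polytope is integral, so the vertices of $F$ are the incidence vectors of forests of $\graph$ using no edge incident to a removed vertex. These are exactly the forests of $\graph(\bx)$, since a forest in $\graph$ avoiding those edges is a forest in $\graph(\bx)$ together with isolated removed vertices.

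Third, the feasible region is nonempty ($y = \boldsymbol{0}$) and bounded (each $y_e \le 1$). The LP therefore attains its maximum at a vertex, that is, at a forest of $\graph(\bx)$ of maximum cardinality. Conversely, every forest of $\graph(\bx)$ is feasible. The optimal value is thus the maximum number of edges in a forest of $\graph(\bx)$. This equals the number of edges of a spanning forest of $\graph(\bx)$, namely $|\tilde{\vertices}|$ minus the number of components, which is $\Phi(\bx)$.

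The main obstacle is the integrality of the forest polytope. I would take this as the known result of Edmonds (greedy/matroid polytope) rather than reprove it. If a self-contained argument were needed, I would prove it by LP duality. The plan would be to exhibit, for any weight vector, a dual solution supported on a nested chain of sets $S$, constructed from the greedy (Kruskal) algorithm, and to show that its objective matches the weight of the greedy forest.

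Two minor points remain. The statement's $y(S) = \sum_{e\in\edges[S]} y_v$ has a typo and should read $y_e$. Also, only binary $\bx$ is considered, which is what makes the case split in the first step valid.
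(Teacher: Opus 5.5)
Your proof is correct and is essentially the paper's proof. Both rest on Edmonds' description of the forest polytope, together with the observation that for binary $\bx$ the $\beta$-constraints only force $y_e = 0$ on edges incident to removed vertices and are otherwise implied by the subtour inequality for $S=\{u,v\}$. The one difference is that you apply Edmonds to $\graph$ itself and pass to a face, whereas the paper applies it to $\graph(\bx)$ (starting from the matroid rank form) and then lifts to $\R^{\edges}$. Your version has the small advantage of handling the subtour inequalities for sets $S \not\subseteq \tilde{\vertices}$ automatically, a point the paper's lifting step leaves implicit.
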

\begin{proof}{Proof}
  This is a consequence of a result by \citet{Edmonds1970} that yields an inequality description of the convex hull of incidence vectors of acyclic subgraphs, see Appendix~\ref{sec:proofs} for details.
    \qed
\end{proof}

\medskip
Based on Lemma~\ref{lem:treeLP}, we know that~$\Phi(\bx)$ can be computed by solving an LP model.
That is, Model~\eqref{eq:bilevel} is a bilevel optimization problem with an LP model in the lower level.
We can turn the problem into a single-level problem by dualizing the LP model for computing~$\Phi(\bx)$.
To derive the dual, we use the dual variables as indicated by the Greek letters in brackets in the LP in Lemma~\ref{lem:treeLP}, and introduce the notation~$\potatoessub(U) \define \{S \in \potatoes: U \subseteq S\}$ for~$U \subseteq \vertices$, {denoting the family of all supersets of $U$ in $\potatoes$}.
The dual then reads as
\begin{subequations}
  \label{eq:dualLP}
  \begin{align}
    \Phi(\bx) = \min_{{\substack{\potvarvec \ge \boldsymbol{0}\\ \boldsymbol{\beta} \ge \boldsymbol{0}}}} 
    \, \sum_{\substack{S \in \potatoes\colon\\ \card{S} \geq 2}} (\card{S} - 1)\potvar_S
    +
    \sum_{e=\{u,v\} \in \edges}((1 - x_u)\beta_{eu} + (1 - x_v)\beta_{ev})&&&\label{eq:dualLP:obj}\\[1 ex]
    \sum_{S \in \potatoessub(e)} \potvar_S + \beta_{eu} + \beta_{ev} &\geq 1, && e=\{u,v\} \in \edges.\label{eq:dualLP:cons}
  \end{align}
\end{subequations}
For notational convenience, in the following {we} also include variables~$\potvar_S$ for~$S \in \potatoes$ with~$\card{S} = 1$.
This is possible without altering the structure of the problem, because these variables have neither a contribution to the objective function nor Constraint~\eqref{eq:dualLP:cons}.

Based on the dual, we can substitute~$\Phi(\bx)$ in~\eqref{eq:bilevel} and obtain
\begin{subequations}
  \label{eq:singlelevelNonlin}
  \begin{align}
    \min_{{\substack{\bx \in \{0,1\}^{\vertices}\\ \potvarvec \ge \boldsymbol{0}\\ \boldsymbol{\beta} \ge \boldsymbol{0}}}} \sum_{v \in \vertices} c_vx_v &&&\\[1 ex]
    \, \sum_{S \in \potatoes} (\card{S} - 1)\potvar_S
    +
    \sum_{e = \{u,v\} \in \edges}((1 - x_u)\beta_{eu} + (1 - x_v)\beta_{ev})
                               &\leq n  {- k} - \sum_{v \in \vertices} x_v  , &&\label{eq:singlelevelNonlin:linking}\\[1 ex]
    \sum_{S \in \potatoessub(e)} \potvar_S + \beta_{eu} + \beta_{ev} &\geq 1, && e=\{u,v\} \in \edges.\label{eq:singlelevelNonlin:dual}
  \end{align}
\end{subequations}
This single-level reformulation forms the basis for our alternative model that we use to prove Theorem~\ref{thm:zig-basecont}.
In the following, we investigate properties of this model, which allow us to simplify and strengthen {Model}~\eqref{eq:singlelevelNonlin}.
\subsubsection{Simplifying the alternative formulation}
Observe that Model~\eqref{eq:singlelevelNonlin} is a mixed-integer non-linear programming model because it contains the product expressions~$(1-x_u)\beta_{eu}$ and~$(1-x_v)\beta_{ev}$ in Constraint~\eqref{eq:singlelevelNonlin:linking}.
As we show next, this non-linearity can be resolved by restricting the problem to a subspace that contains an optimal solution.
\begin{theorem}
  \label{thm:propertiesSolEF}
  Let~$\graph = (\vertices,\edges)$ be an undirected graph, let~$\bc \in \R^\vertices$, and let~$k$ be a positive integer such that the~$k$-vertex cut problem  is feasible.
  Then there exists an optimal solution~$(\bar{\bx},\bar{\potvarvec},\bar{\boldsymbol{\beta}})$ to~\eqref{eq:singlelevelNonlin} such that~$\bar{\potvar}_S \in \B{}$ for all~$S \in \potatoes$ and~$\bar{\beta}_{ew} = \bar{x}_w$ for all~$e \in \edges$, $w \in e$.
  In particular, if~$\mathscr{K}$ is the set of connected components of the graph arising from~$\graph$ by removing the nodes in~$\{v \in \vertices : \bar{x}_v = 1\}$, then
  \[
    \bar{\potvar}_S
    =
    \begin{cases}
      1, & \text{if } S \in \mathscr{K},\\
      0, & \text{otherwise}.
    \end{cases}
  \]
\end{theorem}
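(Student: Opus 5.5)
Start from an arbitrary optimal solution $(\bar{\bx},\hat{\potvarvec},\hat{\boldsymbol\beta})$ of~\eqref{eq:singlelevelNonlin}; one exists because the $k$-VCP is feasible and, for fixed $\bx$, Model~\eqref{eq:singlelevelNonlin} is feasible exactly when~\eqref{eq:bilevel} is. Indeed, by Lemma~\ref{lem:treeLP} and LP duality, the left-hand side of~\eqref{eq:singlelevelNonlin:linking}, minimized over $(\potvarvec,\boldsymbol\beta)$ satisfying~\eqref{eq:singlelevelNonlin:dual}, equals $\Phi(\bx)$. Keep $\bar{\bx}$ fixed and replace the continuous part by the explicit pair $(\bar{\potvarvec},\bar{\boldsymbol\beta})$ from the statement: $\bar\potvar_S=1$ exactly for the connected components $S\in\mathscr K$ of $\graph(\bar{\bx})$, and $\bar\beta_{ew}=\bar x_w$. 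The objective depends only on $\bx$, so it suffices to show that this pair is feasible, i.e.\ satisfies~\eqref{eq:singlelevelNonlin:dual} and~\eqref{eq:singlelevelNonlin:linking}.

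\emph{Constraint~\eqref{eq:singlelevelNonlin:dual}.} Take an edge $e=\{u,v\}$. If $\bar x_u=1$ or $\bar x_v=1$, then $\bar\beta_{eu}+\bar\beta_{ev}\ge 1$. Otherwise both endpoints survive in $\graph(\bar{\bx})$ and are adjacent, so they lie in a common component $S\in\mathscr K$. Then $S\in\potatoessub(e)$ with $\bar\potvar_S=1$.

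\emph{Constraint~\eqref{eq:singlelevelNonlin:linking}.} The $\beta$-terms vanish, because $(1-\bar x_w)\bar\beta_{ew}=(1-\bar x_w)\bar x_w=0$ for binary $\bar x_w$. This is exactly how the bilinearity disappears. The remaining term is $\sum_{S\in\mathscr K}(\card S-1)=(n-\sum_v\bar x_v)-\card{\mathscr K}$. A spanning forest of $\graph(\bar{\bx})$ has $\card{\tilde\vertices}-\card{\mathscr K}$ edges, so this term equals $\Phi(\bar{\bx})$. Since $\bar{\bx}$ is feasible in~\eqref{eq:bilevel}, we have $\Phi(\bar{\bx})\le n-k-\sum_v\bar x_v$, which is the required inequality. (Equivalently, $\card{\mathscr K}\ge k$.) If one prefers not to pass through~\eqref{eq:bilevel}, use weak duality directly: the original $(\hat\potvarvec,\hat{\boldsymbol\beta})$ is dual feasible, so $\Phi(\bar{\bx})$ is at most its dual objective, which is at most the right-hand side.

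The new point $(\bar{\bx},\bar{\potvarvec},\bar{\boldsymbol\beta})$ is therefore feasible with the same objective value, hence optimal, and it has binary $\potvarvec$ and $\bar\beta_{ew}=\bar x_w$. Singleton components are covered by the convention that $\potvar_S$ is included for $\card S=1$; these variables are harmless since their coefficient $\card S-1$ is zero. The main point to get right is the duality step: one must check that $(\bar{\potvarvec},\bar{\boldsymbol\beta})$ is dual feasible and attains $\Phi(\bar{\bx})$ exactly, which amounts to the spanning-forest edge count $\card{\tilde\vertices}-\card{\mathscr K}$ together with the vanishing of the products $(1-\bar x_w)\bar x_w$. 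I expect no other obstacle.
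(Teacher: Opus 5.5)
Your proposal is correct and follows the paper's own proof. Both keep $\bar{\bx}$ from an optimal solution, set $\bar\potvar_S$ to the indicator of the connected components of $\graph(\bar{\bx})$ and $\bar\beta_{ew}=\bar x_w$, remove the bilinear terms via $(1-\bar x_w)\bar x_w=0$, and check the edge constraints by a case split. The one difference is that you justify $\card{\mathscr K}\ge k$ explicitly, by weak duality applied to the original dual-feasible pair, whereas the paper simply notes that $\bar{\bx}$ encodes a $k$-vertex cut.
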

\begin{proof}{Proof}
  In Appendix~\ref{sec:proofs}, we provide an explicit construction of such a solution.
  \qed
\end{proof}
\begin{corollary}
  \label{cor:strengtheningZIG}
  Let~$\graph = (\vertices,\edges)$ be an undirected graph and let~$k$ be a positive integer.
  Let~$\cliques$ be the family of all cliques in~$\graph$.
  Then, the constraints
  \begin{align*}
    \sum_{S \in \potatoes} \potvar_S = k,~~~~
    \sum_{S \in \potatoes(v)} \potvar_S + x_v = 1,~~  v \in \vertices,~~~~
    \sum_{S \in \potatoes(C)} \potvar_S \leq 1,~~ C \in \cliques,
  \end{align*}
  can be added to Model~\eqref{eq:singlelevelNonlin} without changing the optimal objective value.
\end{corollary}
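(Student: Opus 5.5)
The plan is to invoke Theorem~\ref{thm:propertiesSolEF} and check that its optimal solution satisfies the added constraints. Adding constraints to a minimization problem can only raise the optimal value, so it suffices to find one optimal solution of Model~\eqref{eq:singlelevelNonlin} that satisfies them. If the $k$-VCP is infeasible, then \eqref{eq:singlelevelNonlin} is infeasible by correctness of~\eqref{eq:bilevel} and Lemma~\ref{lem:treeLP}, so adding constraints keeps it infeasible and there is nothing to show. Otherwise, take the solution $(\bar{\bx},\bar{\potvarvec},\bar{\boldsymbol{\beta}})$ given by Theorem~\ref{thm:propertiesSolEF}. It satisfies $\bar{\potvar}_S=1$ exactly for the connected components $S\in\mathscr{K}$ of $\graph(\bar{\bx})$, and $\bar\beta_{ew}=\bar x_w$.

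Next I would verify the three families of constraints for this solution.
\begin{itemize}
\item \emph{Vertex constraints.} If $\bar x_v=1$, then $v$ lies in no component, so $\sum_{S\in\potatoes(v)}\bar\potvar_S=0$. If $\bar x_v=0$, then $v$ lies in exactly one component. In both cases the left-hand side equals $1$.
\item \emph{Clique constraints.} Take any clique $C$. Its surviving vertices $C\setminus\{v:\bar x_v=1\}$ induce a clique in $\graph(\bar{\bx})$, so they lie in a single component. Removed vertices lie in no component. Hence at most one $S\in\mathscr{K}$ meets $C$, and $\sum_{S\in\potatoes(C)}\bar\potvar_S\le 1$.
\end{itemize}

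The cardinality constraint $\sum_S\bar\potvar_S=k$ is the delicate step. It says $|\mathscr{K}|=k$, whereas feasibility only gives at least $k$ components. From \eqref{eq:singlelevelNonlin:linking} with $\bar\beta_{ew}=\bar x_w$, the products $(1-\bar x_w)\bar x_w$ vanish. The constraint then reduces to
\[
\sum_{S\in\mathscr{K}}(|S|-1)=(n-\textstyle\sum_v\bar x_v)-|\mathscr{K}|\le n-k-\sum_v\bar x_v,
\]
which is exactly $|\mathscr{K}|\ge k$. So equality need not hold for this particular solution, and I would modify it.

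If $|\mathscr{K}|>k$, merge components by replacing $\bar\potvar$ with indicators of $k$ clusters. Each cluster is a union of components, formed by grouping the $|\mathscr{K}|$ components into $k$ nonempty groups. The dual constraint~\eqref{eq:singlelevelNonlin:dual} still holds: an edge with both endpoints surviving lies inside one component, hence inside one cluster, and every other edge is covered by $\bar\beta$. The linking left-hand side becomes $(n-\sum_v\bar x_v)-k$, which is exactly the right-hand side, so it holds with equality. The objective depends only on $\bar{\bx}$, so it is unchanged. The vertex and clique checks above go through verbatim, since clusters are unions of components and a clique's surviving vertices lie in one component. This yields an optimal solution satisfying all added constraints, which proves the corollary.
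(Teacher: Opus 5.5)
Your proof is correct and follows the paper's argument: you dispose of the infeasible case, take the component-indicator optimal solution from Theorem~\ref{thm:propertiesSolEF}, check the vertex and clique constraints, and merge components until exactly $k$ clusters remain. Your verification is more explicit than the paper's, which only asserts that the merged solution stays feasible, whereas you check~\eqref{eq:singlelevelNonlin:linking} and~\eqref{eq:singlelevelNonlin:dual} directly.
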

\begin{proof}{Proof}
    We defer the proof to Appendix~\ref{sec:proofs}.
    \qed
\end{proof}
Based on this theorem and corollary, we can simplify the model by dropping the non-linear terms from~\eqref{eq:singlelevelNonlin:linking}, replacing the~$\boldsymbol{\beta}$-variables by~$\bx$-variables, and adding additional constraints.
This results into the model
\begin{subequations}
  \label{eq:singlelevel}
  \begin{align}
    \min_{{\substack{\bx \in \{0,1\}^{\vertices}\\ \potvarvec \ge \boldsymbol{0}}}} \sum_{v \in \vertices} c_vx_v &&&\\[1 ex]
    \sum_{S \in \potatoes} (\card{S} - 1)\potvar_S  &\leq n - \sum_{v \in \vertices} x_v - k , &&\label{eq:singlelevel:linking}\\[1 ex]
    \sum_{S \in \potatoessub(e)} \potvar_S + x_{u} + x_{v} &\geq 1, && e=\{u,v\} \in \edges,\label{eq:singlelevel:dual}\\[1 ex]
    \sum_{S \in \potatoes} \potvar_S &= k, &&\label{eq:singlelevel:card}\\
    \sum_{S \in \potatoes(v)} \potvar_S + x_v &= 1, && v \in \vertices,\label{eq:singlelevel:iff}\\
    \sum_{S \in \potatoes(C)} \potvar_S &\leq 1, && C \in \cliques.\label{eq:singlelevel:cliques}
  \end{align}
\end{subequations}
Comparing~\eqref{eq:singlelevel} and \zigbase, observe that both models share the same variables (although the~$\potvarvec$-variables in Model~\eqref{eq:singlelevel} are continuous), objective, and constraints~\eqref{eq:singlelevel:card}, \eqref{eq:singlelevel:iff}, and~\eqref{eq:singlelevel:cliques}.
To prove Theorem~\ref{thm:zig-basecont}, we therefore need to show that the remaining constraints are redundant.
\subsubsection{Proof of Theorem~\ref{thm:zig-basecont}}
\label{sec:proof}
To show redundancy of~\eqref{eq:singlelevel:linking}, we sum~\eqref{eq:singlelevel:iff} for all~$v \in \vertices$ to derive
\[
  \card{\vertices} - \sum_{v \in \vertices} x_v
  =
  \sum_{v \in \vertices} \sum_{S \in \potatoes(v)}\potvar_S
  =
  \sum_{S \in \potatoes} \sum_{v \in S}\potvar_S
  =
  \sum_{S \in \potatoes} \card{S}\potvar_S.
\]
We thus find
\[
  \sum_{S \in \potatoes} (\card{S} - 1)\potvar_S
  =
  \sum_{S \in \potatoes} \card{S}\potvar_S
  -
  \sum_{S \in \potatoes} \potvar_S
  =
  \card{\vertices} - \sum_{v \in \vertices}x_v - \sum_{S \in \potatoes} \potvar_S
  \overset{\eqref{eq:singlelevel:card}}{\leq}
  \card{\vertices} - \sum_{v \in \vertices}x_v - k,
\]
i.e., \eqref{eq:singlelevel:linking} is redundant.

The redundancy of~\eqref{eq:singlelevel:dual} for edge~$e=\{u,v\}$ can be shown by substituting~$x_u$ and~$x_v$ via~\eqref{eq:singlelevel:iff}:
\begin{align*}
  &&1
  &\leq
    \sum_{S \in \potatoessub(e)} \potvar_S + x_{u} + x_{v}
    =
    \sum_{S \in \potatoessub(e)} \potvar_S + 2 - \sum_{S \in \potatoes(u)} \potvar_S - \sum_{S \in \potatoes(v)}\potvar_S\\[1 ex]
  &\iff&
         1 + \sum_{S \in \potatoessub(e)} \potvar_S
  &\geq
    \sum_{S \in \potatoes(u)} \potvar_S + \sum_{S \in \potatoes(v)}\potvar_S
    =
    \sum_{S \in \potatoes(e)} \potvar_S + \sum_{S \in \potatoessub(e)}\potvar_S\\[1 ex]
  &\iff&
         1
  &\geq
    \sum_{S \in \potatoes(e)} \potvar_S.
\end{align*}
Since every edge is a clique, Constraint~\eqref{eq:singlelevel:dual} is implied by~\eqref{eq:singlelevel:cliques}.
This concludes the proof.\qed
\subsection{Strength of the LP relaxations}
\label{sec:strength}
In the previous section, we have derived \zigbase for  the~$k$-VCP, thus complementing the existing models \ILPnat and \oldBP from the literature.
At first glance, however, it is unclear which model is best suited for solving the~$k$-VCP.
One common means to compare the strength of an ILP model is to assess the objective value of its LP relaxation:
the closer the LP objective value is to the objective value of an optimal mixed-integer solution, the arguably more powerful the bounding mechanism of branch-and-bound (or branch-and-price) is.
We therefore compare the objective values of the LP relaxations of the different methods in the following.

Let~$\graph = (\vertices,\edges)$ be an undirected graph that is endowed with vertex costs~$\bc \in \R^\vertices$, and let~$k$ be a positive integer.
We then denote by~$\lpvalnat(\graph,\bc,k)$, $\lpvalbp(\graph,\bc,k)$, and~$\lpvalzig(\graph,\bc,k)$ the optimal objective value of the LP relaxations of \ILPnat, \oldBP, and~\zigbase, respectively. These LP models are called: \ILPnatLP, \oldBPLP, and~\zigbaseLP.
{Since~\oldBP is in maximization form, $\lpvalbp(\graph,\bc,k)$ is defined as the total vertex cost minus the optimal value of~\oldBPLP, so that it is directly comparable with the optimal LP relaxation values of the other two models, namely~\ILPnatLP and~\zigbaseLP, which are in minimization form.}
\begin{theorem}
  \label{thm:strength}
  Let~$\graph = (\vertices,\edges)$ be an undirected graph, let~$\bc \in \R^\vertices$, and let~$k$ be a positive integer such that the~$k$-vertex cut problem is feasible.
  Then,
  \[
    \lpvalnat(\graph,\bc,k) \leq \lpvalzig(\graph,\bc,k)
    \qquad\text{and}\qquad
    \lpvalbp(\graph,\bc,k) \leq \lpvalzig(\graph,\bc,k).
  \]
\end{theorem}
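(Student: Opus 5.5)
Both inequalities are projection arguments. I would take an optimal solution $(\bar\bx,\bar\potvarvec)$ of \zigbaseLP{} and build from it a feasible solution of \ILPnatLP{} (resp.\ \oldBPLP) with no larger objective value, measured on the common minimization scale.

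\emph{Comparison with \oldBPLP.} This part is routine. Keep $\bar\potvarvec$. Constraint~\eqref{eq:zig-base:cardinality} is \eqref{eq:EF_3}. From \eqref{eq:zig-base:vertex_cover} and $\bar x_v\ge 0$ we get \eqref{eq:EF_1}. Since $\mathscr C'\subseteq\cliques$, \eqref{eq:zig-base:clique_constrs} implies \eqref{eq:EF_2}. For the objective, \eqref{eq:zig-base:vertex_cover} gives
\[
\sum_{S}\Big(\sum_{v\in S}c_v\Big)\bar\potvar_S=\sum_v c_v\sum_{S\in\potatoes(v)}\bar\potvar_S=\sum_v c_v(1-\bar x_v).
\]
Hence $\lpvalbp\le\sum_v c_v-\sum_v c_v(1-\bar x_v)=\sum_v c_v\bar x_v=\lpvalzig$. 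Here I use that the LP relaxation of \zigbase{} imposes $\bar x_v\in[0,1]$.

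\emph{Comparison with \ILPnatLP.} This is the main step. Keep $\bar\bx$ and show it satisfies every inequality~\eqref{INT_R_0}. Fix a spanning forest $\subgraph$ and rewrite the inequality as
\[
|\SUBedges|-\sum_{v}\deg_\subgraph(v)\bar x_v\le |\vertices|-k-\sum_v\bar x_v .
\]
The right-hand side equals $\sum_S(|S|-1)\bar\potvar_S$ by the computation in Section~\ref{sec:proof}, using \eqref{eq:zig-base:vertex_cover} and \eqref{eq:zig-base:cardinality}. For the left-hand side, note that $\sum_v\deg_\subgraph(v)\bar x_v=\sum_{e=\{u,v\}\in\SUBedges}(\bar x_u+\bar x_v)$. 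It therefore suffices to show
\[
\sum_{e\in\SUBedges}\big(1-\bar x_u-\bar x_v\big)\le\sum_S(|S|-1)\bar\potvar_S .
\]
By the substitution already carried out in Section~\ref{sec:proof}, $1-\bar x_u-\bar x_v=\sum_{S\in\potatoessub(e)}\bar\potvar_S+\big(\sum_{S\in\potatoes(e)}\bar\potvar_S-1\big)$. The bracket is $\le 0$ by the edge clique constraint in \eqref{eq:zig-base:clique_constrs}. Thus the left-hand side is at most
\[
\sum_{e\in\SUBedges}\sum_{S\supseteq e}\bar\potvar_S=\sum_S|\SUBedges\cap\edges[S]|\,\bar\potvar_S .
\]
Since $\subgraph$ is acyclic, $|\SUBedges\cap\edges[S]|\le|S|-1$ for every nonempty $S$. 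This gives the claim termwise, since $\bar\potvar\ge0$.

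Thus $\bar\bx$ is feasible for \ILPnatLP{} with the same objective value, so $\lpvalnat\le\lpvalzig$. The only delicate points are the sign bookkeeping in the rewriting of~\eqref{INT_R_0} and the forest bound $|\SUBedges\cap\edges[S]|\le|S|-1$, which is Edmonds' subtour-type inequality already underlying Lemma~\ref{lem:treeLP}. The feasibility assumption on the $k$-VCP guarantees that \zigbaseLP{} is feasible, so the optimum $(\bar\bx,\bar\potvarvec)$ exists.
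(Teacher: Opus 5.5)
Your proposal is correct and follows essentially the same route as the paper. For $\text{LP}_\text{E1}$ you keep $\bar\potvarvec$ and match the objectives via \eqref{eq:zig-base:vertex_cover}. For $\text{LP}_\text{N}$ you sum the edge inequality $\sum_{S\in\potatoessub(e)}\potvar_S+x_u+x_v\ge 1$ over the forest edges (obtained, as in Section~\ref{sec:proof}, from the edge clique constraint) and bound the result using $|\SUBedges\cap\edges[S]|\le|S|-1$, which is exactly Lemma~\ref{lem:auxstrength}; the only difference is that you inline that lemma as a termwise comparison rather than stating it separately.
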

{That is to say}, \zigbaseLP is not weaker than both,~\ILPnatLP and~\oldBPLP.
In the remainder of this section, we provide a proof of this theorem. {It is worth noticing that, throughout this section, we focus on feasible instances of the $k$-VCP, as the comparison of LP relaxation values is meaningful only in this case. The infeasible case is discussed separately in Section \ref{sec:initRMP}.}
\begin{lemma}
  \label{lem:auxstrength}
  Let~$\graph = (\vertices,\edges)$ be an undirected graph and let~$\subgraph = (\SUBvertices, \SUBedges) \in \SUBgraphs$.
  Let~$(\bx,\potvarvec)$ be a feasible solution to~\zigbaseLP.
  Then,
  \[
    \sum_{e \in \SUBedges} \sum_{S \in \potatoessub(e)} \potvar_S \leq \card{\vertices} - \sum_{v \in \vertices} x_v - k.
  \]
\end{lemma}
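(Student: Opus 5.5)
The plan is to exchange the order of summation and bound the contribution of each set~$S$ separately. We then finish with the same identity that was used in Section~\ref{sec:proof} to show that~\eqref{eq:singlelevel:linking} is redundant.

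Since all $\potvar_S \ge 0$, we write
\[
  \sum_{e \in \SUBedges} \sum_{S \in \potatoessub(e)} \potvar_S
  = \sum_{S \in \potatoes} \card{\SUBedges \cap \edges[S]}\,\potvar_S .
\]
Here $\SUBedges \cap \edges[S]$ is the set of edges of the forest~$\subgraph$ with both endpoints in~$S$. These edges form an acyclic graph on the vertex set~$S$, so there are at most $\card{S}-1$ of them. (For $\card{S}=1$ the bound $0$ is trivial.) Hence the left-hand side is at most $\sum_{S \in \potatoes} (\card{S}-1)\potvar_S$.

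It remains to bound $\sum_{S} (\card{S}-1)\potvar_S$. We sum the constraints~\eqref{eq:zig-base:vertex_cover} over all $v \in \vertices$, exactly as in Section~\ref{sec:proof}, to get
\[
  \sum_{S \in \potatoes} \card{S}\potvar_S = \card{\vertices} - \sum_{v\in\vertices} x_v .
\]
Subtracting the cardinality constraint~\eqref{eq:zig-base:cardinality}, $\sum_S \potvar_S = k$, then gives
\[
  \sum_{S \in \potatoes} (\card{S}-1)\potvar_S = \card{\vertices} - \sum_{v} x_v - k ,
\]
which is the claimed bound. Both equalities hold for fractional $(\bx,\potvarvec)$, since only linear constraints of \zigbaseLP{} are used.

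No step is a real obstacle. The only point needing care is the forest-edge count $\card{\SUBedges\cap\edges[S]} \le \card{S}-1$, which holds because a subgraph of a forest is a forest. The clique constraints~\eqref{eq:zig-base:clique_constrs} are not needed here. The lemma will presumably then be combined with $\deg$-counting to show that $\bx$ satisfies~\eqref{INT_R_0}.
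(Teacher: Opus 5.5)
Your proposal is correct and follows essentially the same argument as the paper. The paper likewise exchanges the order of summation to obtain $\sum_{S}\card{\SUBedges[S]}\,\potvar_S$ and bounds this by $\sum_S(\card{S}-1)\potvar_S$, using acyclicity and $\potvarvec\ge\boldsymbol{0}$. It then rewrites this bound as $\card{\vertices}-\sum_v x_v-k$ via the vertex constraints~\eqref{eq:zig-base:vertex_cover} and the cardinality constraint~\eqref{eq:zig-base:cardinality}.
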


\begin{proof}{Proof}
  We observe
  \begin{align*}
    \sum_{e \in \SUBedges} \sum_{S \in \potatoessub(e)} \potvar_S
    &=
      \sum_{S \in \potatoes} \card{\SUBedges[S]} \; \potvar_S
    \leq
      \sum_{S \in \potatoes} (\card{S} - 1) \; \potvar_S
    =
      \sum_{S \in \potatoes} \card{S} \potvar_S - \sum_{S \in \potatoes} \potvar_S\\[1 ex]
    &=
      \sum_{S \in \potatoes} \sum_{v \in S} \potvar_S - \sum_{S \in \potatoes} \potvar_S
    =
      \sum_{v \in \vertices} \sum_{S \in \potatoes(v)} \potvar_S - \sum_{S \in \potatoes} \potvar_S
    \overset{\eqref{eq:zig-base:vertex_cover}}{=}
      \sum_{v \in \vertices} (1 - x_v) - \sum_{S \in \potatoes} \potvar_S\\[1 ex]
    &=
      \card{\vertices} - \sum_{v \in \vertices} x_v - \sum_{S \in \potatoes} \potvar_S
    \overset{\eqref{eq:zig-base:cardinality}}{=}
      \card{\vertices} - \sum_{v \in \vertices} x_v - k,
  \end{align*}
  where the first inequality holds, because a forest contains at most~$\card{S} - 1$ edges from a set~$S$ and~$\potvarvec$ is non-negative.\qed
\end{proof}
\begin{proposition}
  Let~$\graph = (\vertices,\edges)$ be an undirected graph, let~$\bc \in \R^\vertices$, and let~$k$ be a positive integer such that the~$k$-vertex cut problem is feasible.
  Then, $\lpvalnat(\graph,\bc,k) \leq \lpvalzig(\graph,\bc,k)$.
\end{proposition}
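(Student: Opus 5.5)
We will show that the $\bx$-part of every feasible solution of \zigbaseLP is feasible for \ILPnatLP. Since both LPs minimize the same objective $\sum_{v\in\vertices} c_v x_v$, this gives $\lpvalnat(\graph,\bc,k) \leq \lpvalzig(\graph,\bc,k)$. So fix a feasible $(\bx,\potvarvec)$ of \zigbaseLP. The bounds $0\le x_v\le 1$ are immediate: $\potvarvec\ge \boldsymbol{0}$ together with \eqref{eq:zig-base:vertex_cover} gives $x_v\le 1$, and $x_v\ge 0$ is part of the relaxation. It remains to verify \eqref{INT_R_0} for an arbitrary spanning forest $\subgraph=(\vertices,\SUBedges)\in\SUBgraphs$.

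Rewrite \eqref{INT_R_0} as
\[
|\SUBedges| - \sum_{v\in\vertices}\deg_{\subgraph}(v)\,x_v \le |\vertices| - \sum_{v\in\vertices} x_v - k .
\]
By Lemma~\ref{lem:auxstrength}, the right-hand side is at least $\sum_{e \in \SUBedges} \sum_{S \in \potatoessub(e)} \potvar_S$. It therefore suffices to prove the edgewise bound
\[
1 - x_u - x_v \le \sum_{S\in\potatoessub(e)}\potvar_S \qquad \text{for every } e=\{u,v\}\in\SUBedges,
\]
and then sum over $e\in\SUBedges$. The sum is correct because $\sum_{e\in\SUBedges}(x_u+x_v)=\sum_{v}\deg_{\subgraph}(v)\,x_v$.

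The edgewise bound is exactly Constraint~\eqref{eq:singlelevel:dual}. Its redundancy was already shown in Section~\ref{sec:proof}, and that argument uses only the linear constraints \eqref{eq:zig-base:vertex_cover} and \eqref{eq:zig-base:clique_constrs}, never integrality. Concretely, substituting $x_u,x_v$ via \eqref{eq:zig-base:vertex_cover} reduces the bound to $\sum_{S\in\potatoes(e)}\potvar_S\le 1$, which is the clique constraint \eqref{eq:zig-base:clique_constrs} for the clique $e$. So the whole argument carries over verbatim to fractional $\potvarvec$.

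The only step needing care is the bookkeeping in the summation: each forest edge must contribute its two endpoint terms so that they sum exactly to $\sum_v \deg_{\subgraph}(v)\,x_v$, and the sign convention $\deg_{\subgraph}(v)-1$ in \eqref{INT_R_0} must match after moving $\sum_v x_v$ to the other side. No real obstacle is expected beyond this.
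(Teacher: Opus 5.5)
Your proposal is correct and follows the paper's proof essentially step for step. You obtain the edge inequalities \eqref{eq:singlelevel:dual} from \eqref{eq:zig-base:vertex_cover} and the edge clique constraints, sum them over the edges of the spanning forest, and bound the resulting $\potvarvec$-term with Lemma~\ref{lem:auxstrength}; your explicit check that $0 \le x_v \le 1$ is a small detail the paper leaves implicit.
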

\begin{proof}{Proof}
  Let~$(\bx,\potvarvec)$ be feasible for~\zigbaseLP.
  It suffices to prove that~$\bx$ satisfies~\eqref{INT_R_0}.
  To this end, let $\subgraph = (\SUBvertices,\SUBedges) \in \SUBgraphs$.
  Recall that, for all~$e \in \edges$, Inequality~\eqref{eq:singlelevel:dual} is redundant for~\zigbaseLP.
  Hence, these inequalities are valid and summing them for all~$\{u,v\} \in \SUBedges$, we obtain
  \begin{align*}
    \card{\SUBedges}
    &\leq
    \sum_{\{u,v\} \in \SUBedges} \bigg( \sum_{S \in \potatoessub(\{u,v\})} \potvar_S + x_u + x_v\bigg)\\[1 ex]
    &=
    \sum_{v \in \vertices} \deg_\subgraph(v) x_v + \sum_{e \in \SUBedges} \sum_{S \in \potatoessub(e)} \potvar_S
    \overset{\text{Lemma~\ref{lem:auxstrength}}}{\leq}
    \card{\vertices} - k + \sum_{v \in \vertices} (\deg_\subgraph(v) - 1) x_v.
  \end{align*}
  Hence, $\bx$ is feasible for~\ILPnatLP, which proves the assertion.
  \qed
\end{proof}

\begin{proposition}
  Let~$\graph = (\vertices,\edges)$ be an undirected graph, let~$\bc \in \R^\vertices$, and let~$k$ be a positive integer such that the~$k$-vertex cut problem is feasible.
  Then, $\lpvalbp(\graph,\bc,k) \leq \lpvalzig(\graph,\bc,k)$.
  Moreover, if~$\cliques' = \cliques$ for defining~\eqref{eq:EF_2} in \oldBP, then~$\lpvalbp(\graph,\bc,k) = \lpvalzig(\graph,\bc,k)$.
\end{proposition}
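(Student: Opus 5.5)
We map feasible solutions of \zigbaseLP to feasible solutions of \oldBPLP with matching objective values, and also in the reverse direction when $\cliques'=\cliques$. The dictionary between the two models is the one suggested by Constraint~\eqref{eq:zig-base:vertex_cover}: $x_v$ is the slack of~\eqref{eq:EF_1}. Since \oldBP maximizes $\sum_S c(S)\potvar_S$ and $\lpvalbp$ is defined as $\sum_v c_v$ minus the \oldBPLP value, we need to show that the \oldBPLP maximum is at least $\sum_v c_v - \lpvalzig$.

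First, let $(\bx,\potvarvec)$ be an optimal solution of \zigbaseLP. We claim that $\potvarvec$ alone is feasible for \oldBPLP, with the $\potvar_S\in[0,1]$ bounds included.
\begin{itemize}
\item Constraint~\eqref{eq:EF_3} coincides with~\eqref{eq:zig-base:cardinality}.
\item Constraint~\eqref{eq:EF_1} follows from~\eqref{eq:zig-base:vertex_cover} together with $x_v\ge 0$.
\item Constraint~\eqref{eq:EF_2} holds for every $C\in\cliques'\subseteq\cliques$, because it appears verbatim among~\eqref{eq:zig-base:clique_constrs}.
\item The bound $\potvar_S\le 1$ follows from~\eqref{eq:zig-base:vertex_cover} for any $v\in S$, since $S\neq\emptyset$.
\end{itemize}

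Next we compare objective values by exchanging the order of summation, exactly as in the proof of Theorem~\ref{thm:zig-basecont}:
\[
\sum_{S\in\potatoes} c(S)\potvar_S=\sum_{v\in\vertices} c_v\sum_{S\in\potatoes(v)}\potvar_S=\sum_{v\in\vertices} c_v(1-x_v).
\]
The \oldBPLP value of $\potvarvec$ is therefore $\sum_v c_v-\lpvalzig$. Taking the maximum over \oldBPLP gives $\sum_v c_v - \lpvalbp \ge \sum_v c_v-\lpvalzig$, that is, $\lpvalbp\le\lpvalzig$.

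For the equality case with $\cliques'=\cliques$, we reverse the map. Take an optimal $\potvarvec$ of \oldBPLP and set $x_v := 1-\sum_{S\in\potatoes(v)}\potvar_S$.
\begin{itemize}
\item By~\eqref{eq:EF_1}, $x_v\ge 0$, and $x_v\le 1$ follows from non-negativity of $\potvarvec$.
\item Constraint~\eqref{eq:zig-base:vertex_cover} holds by construction.
\item Constraints~\eqref{eq:zig-base:cardinality} and~\eqref{eq:zig-base:clique_constrs} are inherited, since $\cliques'=\cliques$.
\end{itemize}
The same summation identity shows that the two objective values correspond, so $\lpvalzig\le\lpvalbp$.

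The main point of care is that the LP relaxations are meant with $\potvarvec\in[0,1]$ and $\bx\in[0,1]$, so the implied upper bounds must be checked in both directions; we did this above. If the relaxations are understood as $\potvarvec\ge 0$ only, the arguments go through unchanged.
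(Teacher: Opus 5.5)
Your proof is correct and takes essentially the same route as the paper. The paper eliminates the $\bx$-variables from the LP relaxation of $\text{ILP}_\text{E2}$ via $x_v = 1 - \sum_{S \in \potatoes(v)} \potvar_S$, notes that the resulting $\potvarvec$ is feasible for the relaxation of $\text{ILP}_\text{E1}$ with matching objective, and reverses the map when $\cliques' = \cliques$; this is exactly your solution-mapping argument, and your explicit check of the implied bounds $\potvar_S \le 1$ and $x_v \le 1$ spells out a detail the paper leaves tacit.
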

\begin{proof}{Proof}
  To prove the assertion, we eliminate the~$\bx$-variables from~\zigbaseLP.
  By substituting~$x_v$ via~\eqref{eq:zig-base:vertex_cover}, we obtain
  \begin{align*}
     \min_{{\potvarvec \ge \boldsymbol{0}}} \left\{ \sum_{v \in \vertices} c_v \bigg(1 - \sum_{S \in \potatoes(v)} \potvar_S\bigg): ~~
     \sum_{S \in \potatoes} \potvar_S  = k, ~~
     \sum_{S \in \potatoes(C)} \potvar_S \leq 1,~  C \in \cliques,~~
     \sum_{S \in \potatoes(v)} \potvar_S  \leq 1,~  v \in \vertices
     \right\}. 
  \end{align*}
  Since~\eqref{eq:zig-base:clique_constrs} is not weaker than~\eqref{eq:EF_2}, $\potvarvec$ is feasible for~\oldBPLP.
  
  Moreover, note that the objective function simplifies to
  \[
    \sum_{v \in \vertices} \bigg(c_v - c_v \sum_{S \in \potatoes(v)} \potvar_s\bigg)
    =
    \sum_{v \in \vertices} c_v - \sum_{S \in \potatoes}\sum_{v \in S} c_v\potvar_s
  \]
  and is thus equivalent to the objective of~\oldBP.
  Consequently, $ \lpvalbp(\graph,\bc,k) \leq  \lpvalzig(\graph,\bc,k) $.

  If~$\cliques' = \cliques$, let~$\potvarvec$ be a feasible solution to \oldBPLP.
  Then, $(\bx,\potvarvec)$, where~$x_v = 1 - \sum_{S \in \potatoes(v)} \potvar_S$, $v \in \vertices$, is easily seen to be feasible to \zigbaseLP.
  Since arguments analogous to the above show that both solutions have the same objective value, we conclude~$\lpvalzig(\graph,\bc,k) \leq \lpvalbp(\graph,\bc,k)$, and thus~$\lpvalzig(\graph,\bc,k) = \lpvalbp(\graph,\bc,k)$.
  \qed
\end{proof}

\medskip
Combining these results thus yields Theorem~\ref{thm:strength}.

\section{A branch-and-price algorithm to solve the new extended formulation}
\label{sec:ALGO}
In this section, we describe the branch-and-price framework that we use to solve~\zigbase.
Since~\zigbase has exponentially many variables and possibly exponentially many constraints, we do not initialize it with all~$\potvarvec$-variables and constraints of type~\eqref{eq:zig-base:clique_constrs}.
Instead, we describe which subsets of~$\potvarvec$-variables and constraints are used to initialize Model~\zigbase to obtain the so-called restricted master problem (RMP) in Section~\ref{sec:initRMP}.
We continue the discussion by analyzing the pricing problem at the root node (Section~\ref{sec:priceRMP}), which is followed by describing our branching scheme and how it is respected by the pricing problem (Section~\ref{sec:branchRMP}).
Afterwards, we explain the separation problem for~\eqref{eq:zig-base:clique_constrs} (Section~\ref{sec:sepaRMP}).
We conclude the section by providing symmetry handling techniques (Section~\ref{sec:symmetry}) and further algorithmic enhancements (Section~\ref{sec:furtherEnhancements}) for accelerating the BP algorithm.
\subsection{The branch-and-price framework}
\label{sec:RMP}
In the following, we describe the building blocks of our branch-and-price framework to solve~\zigbase.
Before doing so, we observe that all equality constraints of~\zigbase can be relaxed {into inequalities} and still yield a correct model of the~$k$-VCP.
Although relaxing these constraints can, in principle, weaken the LP relaxation, we observed that the model with relaxed constraints performs better in practice, and we discuss possible explanations for this behavior later in Section~\ref{sec:ablation}.
\begin{proposition}
\label{prop:zigbases}
  Let~$\graph = (\vertices,\edges)$ be an undirected graph with vertex costs~$\bc \in \R^\vertices$ and let~$k$ be a positive integer.
  Moreover, let~$\cliques'$ be an edge-covering family of cliques of~$\graph$.
  Then, 
  
  \begin{subequations}
  \label{eq:zig-bases}
        \begin{align}
        \text{(\zigbases)} & & \min_{{\substack{\bx \in \{0,1\}^{\vertices}\\ \potvarvec \ge \boldsymbol{0}}}} \, \, \sum_{v \in \vertices} c_v \, x_v &&&\\[1 ex]
          & & \sum_{S \in \potatoes} \potvar_S &\geq k, &&\label{eq:zig-bases:cardinality}\\[1 ex]
          & &  \sum_{S \in \potatoes(v)} \potvar_S  + x_v&\geq 1, && v \in \vertices,\label{eq:zig-bases:vertex_cover}\\[1 ex]
          & & \sum_{S \in \potatoes(C)} \potvar_S &\leq 1, && C \in \cliques'.\label{eq:zig-bases:clique_constrs}
        \end{align}    
  \end{subequations}

\bigskip
  
  is a correct model of the~$k$-VCP.
\end{proposition}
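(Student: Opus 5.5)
The plan is to prove two inequalities between optimal values and then transfer optimality to the projected $\bx$. Let $\mathrm{OPT}$ denote the optimal value of the $k$-VCP. I will show (i) every $k$-vertex cut $\vertices_0$ yields a feasible solution of \zigbases{} of cost $c(\vertices_0)$, so the optimum of \zigbases{} is at most $\mathrm{OPT}$. I will then show (ii) every feasible $(\bx,\potvarvec)$ of \zigbases{} yields a $k$-vertex cut $\vertices_0'\subseteq\{v : x_v=1\}$. Since $\bc\ge \boldsymbol 0$ as in the definition of the $k$-VCP, this gives $c(\vertices_0')\le \sum_v c_vx_v$. Together these show the optimal values coincide. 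Statement (ii) alone is not enough to conclude that the \emph{original} support $\{v : x_v=1\}$ of an optimal solution is itself a cut; that needs a separate argument, discussed at the end.

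Step (i) is routine. Given clusters $\vertices_1,\dots,\vertices_k$, set $x_v=1$ exactly on $\vertices_0$ and $\potvar_{\vertices_i}=1$ for each $i$. Then \eqref{eq:zig-bases:cardinality} and \eqref{eq:zig-bases:vertex_cover} hold with equality. For a clique $C\in\cliques'$, the vertices of $C\setminus\vertices_0$ are pairwise adjacent, hence lie in one cluster, so at most one $\vertices_i$ meets $C$ and \eqref{eq:zig-bases:clique_constrs} holds.

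Step (ii) is the core and the main obstacle. The difficulty is that relaxing to $\ge$ and to an edge-covering family $\cliques'$ lets $\potvarvec$ put mass on sets lying inside the cut. For example, on the path $a\!-\!b\!-\!c$ with $\bx=\boldsymbol 1$ and $\potvar_{\{a\}}=\potvar_{\{c\}}=1$, the solution is feasible, yet $\graph-\{v:x_v=1\}$ is empty; here one must shrink the cut to $\{b\}$. This is why I aim for a cut that is a \emph{subset} of the $\bx$-support rather than the support itself. The key propagation fact is as follows. For every edge $\{u,v\}$ covered by some $C\in\cliques'$, the mass on sets meeting $\{u,v\}$ is at most $1$. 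If moreover $x_u=x_v=0$, then \eqref{eq:zig-bases:vertex_cover} forces mass at least $1$ at each endpoint, so all of that mass lies on sets containing both $u$ and $v$, and it totals exactly $1$. Propagating along paths, every component $K$ of $\graph(\bx)$ with at least two vertices is covered by sets containing all of $K$, with total mass $1$.

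To find the subset cut, I would first try the following rule. Retain every $x$-zero vertex, and for each support set $S$ lying inside the cut, retain one representative vertex $r(S)$. I would choose the representatives greedily so that retained vertices drawn from different support sets are non-adjacent; the edge constraints $\sum_{S\in\potatoes(\{u,v\})}\potvar_S\le 1$ are what should make such a choice possible. The goal is that each support set then lies in its own component of the resulting graph. If that holds, the number of components is at least the number of support sets, which is at least $\sum_S\potvar_S\ge k$ because each $\potvar_S\le 1$ by the clique constraints whenever $S$ contains a non-isolated vertex. Isolated vertices of $\graph$ need separate care, since $\potvar_{\{v\}}$ is unconstrained there; I would simply un-cut them, which costs nothing when $\bc\ge\boldsymbol 0$ and adds a component each. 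I am not certain the greedy choice always succeeds when supports overlap fractionally along longer paths. If it fails, my fallback is an uncrossing and extreme-point argument: fix $\bx$ and optimise over $\potvarvec$ to show that the $\potvarvec$-polytope of \zigbases{} has vertices whose support is a family of mutually disconnected sets.

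Finally, correctness of the projection. For an optimal $(\bx,\potvarvec)$, step (ii) gives a $k$-vertex cut inside the $\bx$-support of cost at most $\sum_v c_vx_v$, which by step (i) equals $\mathrm{OPT}$. When all removed vertices have positive cost, this forces the subset cut to be the whole support, which is then an optimal $k$-vertex cut. When some removed vertices have zero cost, the shrunk cut remains optimal but the support itself need not be; in that case I would state the correctness claim up to this zero-cost reduction, with the support recovered after un-cutting such vertices.
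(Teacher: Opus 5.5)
Your Step (i) is correct, and so is your propagation fact for adjacent $u,v$ with $x_u=x_v=0$. Step (ii), however, is false, so neither the greedy choice of representatives nor the extreme-point fallback can succeed. On cut vertices, the edge constraints bound $\sum_S\lambda_S$ only by a fractional stable-set number, not by $\alpha$.

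Take $\mathcal G$ to be two disjoint $5$-cycles with unit costs plus a path $a$--$b$--$c$ with $c_a=c_c=1$, $c_b=100$, and take $k=6$. A $5$-cycle leaves at most $\alpha(C_5)=2$ components, and leaves two only after deleting at least two of its vertices. The path leaves two components only if $b$ is deleted. Hence every $6$-vertex cut costs at least $104$.

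Now set $x_v=1$ on the ten cycle vertices and $x_a=x_b=x_c=0$. Set $\lambda_{\{v\}}=\tfrac12$ for each cycle vertex $v$, $\lambda_{\{a,b,c\}}=1$, and all other $\lambda_S=0$. The graph is triangle-free, so every clique in any edge-covering $\mathscr C'$ is a vertex or an edge. Each cycle edge meets mass $\tfrac12+\tfrac12=1$, each path edge meets mass $1$, all vertex constraints hold, and $\sum_S\lambda_S=5+1=6$. This feasible solution costs $10$. Yet every subset of $\{v:x_v=1\}$ leaves $\{a,b,c\}$ connected, so it yields at most $5$ components.

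Your fallback fails on the same example. The half-integral point on each $5$-cycle is an extreme point of the $\lambda$-polytope for this $\bx$. Moreover, no feasible $\lambda$ for this $\bx$ has mutually disconnected support: such a family consists of the one set carrying $\{a,b,c\}$ plus at most four pairwise disconnected sets inside the cycles, each of mass at most $1$, for a total of at most $5$.

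Isolated vertices are a second failure that un-cutting does not fix. If $v$ lies in no clique of $\mathscr C'$, nothing bounds $\lambda_{\{v\}}$. For example, take an isolated vertex $v$ plus the path $a$--$b$--$c$ with $c_b>0$ and $k=3$. Then $\bx=\mathbf 0$, $\lambda_{\{v\}}=2$, $\lambda_{\{a,b,c\}}=1$ is feasible at cost $0$, although every $3$-vertex cut must delete $b$.

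The first example shows that the proposition as stated is false: the optimum of $\text{ILP}_\text{E2}^\star$ is at most $10<104$. No argument can close your gap. The paper's proof fails at the corresponding point, in its last paragraph. To restore $\sum_{S\in\mathscr S(v)}\lambda_S+x_v=1$ it ``decreases $x_v$''. For binary $x_v$ this is possible only if $\sum_{S\in\mathscr S(v)}\lambda_S=1$, whereas the cut cycle vertices above carry mass $\tfrac12$. That paragraph also tacitly assumes every vertex lies in a clique of $\mathscr C'$.

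What the earlier part of the paper's proof does establish is the variant that keeps the vertex constraints as equalities $\sum_{S\in\mathscr S(v)}\lambda_S+x_v=1$, while still using $\sum_S\lambda_S\ge k$ and an arbitrary edge-covering $\mathscr C'$. Your propagation fact proves this directly in a few lines:
cut vertices carry no mass;
every support set is a union of components of $\mathcal G(\bx)$;
each component receives total mass exactly $1$;
hence the number of components is at least $\sum_S\lambda_S\ge k$.
Relaxing the vertex equalities is valid for continuous $\bx$, as in the LP relaxation, where $x_v$ can be lowered to $1-\sum_{S\in\mathscr S(v)}\lambda_S$. It is not valid for binary $\bx$.
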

The branch-and-price framework that we discuss in the following is based on Model \zigbases.
\begin{proof}{Proof}
  Based on Theorem~\ref{thm:zig-basecont}, we can relax the integrality requirement on the~$\potvarvec$-variables.
  To conclude the proof, we first show that it is possible to relax the equality in Constraint~\eqref{eq:zig-base:cardinality} and to define~\eqref{eq:zig-base:clique_constrs} only in terms of an edge-covering family of cliques rather than all cliques.
  Afterwards, we also argue that~\eqref{eq:zig-base:vertex_cover} can be relaxed.
  
  Based on Theorem~\ref{thm:propertiesSolEF} and~Corollary~\ref{cor:strengtheningZIG}, replacing~$\boldsymbol{\beta}$-variables in Model~\eqref{eq:singlelevelNonlin} by~$\bx$-variables and augmenting the resulting model by~$\sum_{S \in \potatoes} \potvar_S \geq k$ and~$\sum_{S \in \potatoes(v)} \potvar_S + x_v = 1$ for all~$v \in \vertices$ is a correct model of the $k$-VCP.
  Following the same arguments as in Section~\ref{sec:proof}, Constraint~\eqref{eq:singlelevelNonlin:linking} is redundant in the augmented model.
  Moreover, for every~$\{u,v\} \in \edges$, we substitute~$x_u$ and~$x_v$ in~\eqref{eq:singlelevelNonlin:dual} via~\eqref{eq:singlelevel:iff} to obtain
  \begin{align*}
    &&1
    &\leq
    \sum_{S \in \potatoessub(\{u,v\})} \potvar_S
    +
    \bigg(1 -  \sum_{S \in \potatoes(u)} \potvar_S\bigg)
    +
    \bigg(1 -  \sum_{S \in \potatoes(v)} \potvar_S\bigg)\\
    \iff&&
    1
    +
    \sum_{S \in \potatoessub(\{u,v\})} \potvar_S
    &\geq
      \sum_{S \in \potatoes(u)} \potvar_S
      +
      \sum_{S \in \potatoes(v)} \potvar_S
      =
      \sum_{S \in \potatoes(\{u,v\})} \potvar_S
      +
      \sum_{S \in \potatoessub(\{u,v\})} \potvar_S.
  \end{align*}
  That is, the \emph{edge constraint} $\sum_{S \in \potatoes(\{u,v\})} \potvar_S \leq 1$ is an equivalent representation of Constraint~\eqref{eq:singlelevelNonlin:dual} in the augmented model.
  Since the clique constraints~\eqref{eq:singlelevel:cliques} are valid for the augmented model by Corollary~\ref{cor:strengtheningZIG} and such a constraint for a clique~$C$ dominates the edge constraints for edges~$\{u,v\} \subseteq C$, it is sufficient to define~\eqref{eq:zig-base:clique_constrs} in terms of an edge-covering family of cliques.

  To conclude the proof, we show that we can relax equality in~\eqref{eq:zig-base:vertex_cover}.
  Let~$(\bx,\potvarvec)$ be an optimal solution of~\eqref{eq:zig-bases}.
  The assertion follows if we can establish that~$(\bx,\potvarvec)$ satisfies~$\sum_{S \in \potatoes(v)} \potvar_S  + x_v = 1$.
  This is indeed the case, since every vertex~$v \in \vertices$ is contained in a clique~$C \in \cliques'$.
  Hence, \eqref{eq:zig-bases:clique_constrs} and non-negativity of the~$\potvarvec$-variables imply~$\sum_{S \in \potatoes(v)} \potvar_S \leq 1$.
  If there was a~$v \in \vertices$ such that
  $\sum_{S \in \potatoes(v)} \potvar_S + x_v > 1$,
  we could improve the objective value by decreasing~$x_v$ because~$c_v > 0$.
  This, however, contradicts optimality of~$(\bx,\potvarvec)$.
  \qed
\end{proof}
\subsubsection{Initial sets of variables and constraints}
\label{sec:initRMP}
\zigbases contains two families of variables, the~$\bx$-variables and the~$\potvarvec$-variables.
All the $\card{\vertices}$ variables of the first type are added to the RMP.
For the second type of variables, we follow~\citet{CFLMMM18} and only add~$\potvar_{\{v\}}$, $v \in \vertices$, to the RMP.
Using the same arguments as~\citet{CFLMMM18}, this choice guarantees that the RMP is feasible whenever the~$k$-VCP admits a feasible solution.
To make this article self-contained, we provide the proof of this statement.
\begin{lemma}
  \label{lem:stableset}
  Let~$\graph = (\vertices,\edges)$ be an undirected graph and let~$k$ be a positive integer.
  Then, $\graph$ admits a~$k$-vertex cut if and only if~$\graph$ admits a stable set
  of size~$k$.
  In particular, if~$S \subseteq \vertices$ is a stable set of size~$k$, then~$\vertices
  \setminus S$ is a~$k$-vertex cut.
\end{lemma}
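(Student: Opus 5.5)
The proof has two directions, plus the ``in particular'' clause, which I handle first because it gives one direction directly.

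\emph{Stable set gives a cut.} Let $S \subseteq \vertices$ be a stable set with $\card{S} = k$, and set $\vertices_0 = \vertices \setminus S$. The remaining graph $\graph[\vertices \setminus \vertices_0] = \graph[S]$ has vertex set $S$ and edge set $\edges[S] = \emptyset$, because $S$ is stable. Each vertex of $S$ is therefore its own connected component, so $\graph[S]$ has exactly $k$ components. Hence $\vertices_0$ is a $k$-vertex cut.

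\emph{Cut gives a stable set.} Let $\vertices_0$ be a $k$-vertex cut. By the structural characterization in the introduction, $\vertices \setminus \vertices_0$ splits into $k$ non-empty clusters $\vertices_1,\dots,\vertices_k$, and no edge of $\edges$ joins two different clusters. Pick one vertex $s_i \in \vertices_i$ for each $i \in \mathcal K$, and let $S = \{s_1,\dots,s_k\}$.
\begin{itemize}
\item The clusters are pairwise disjoint, so the $s_i$ are distinct and $\card{S} = k$.
\item For $i \neq j$, the vertices $s_i$ and $s_j$ lie in different clusters, so $\{s_i,s_j\} \notin \edges$. Since $\graph$ is simple, there are no loops either, so $S$ is stable.
\end{itemize}
Equivalently, one can take one representative from each of $k$ distinct connected components of $\graph[\vertices \setminus \vertices_0]$.

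No step is a real obstacle. The only care needed is to use the cluster characterization, so that ``at least $k$ components'' yields exactly $k$ representatives. The representatives are pairwise non-adjacent because components (or clusters) are never joined by an edge.

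As a consequence, feasibility of the $k$-VCP can be checked through stable sets. The RMP containing only the singleton columns $\potvar_{\{v\}}$ is then feasible: set $\potvar_{\{v\}} = 1$ for $v \in S$ and $x_v = 1$ for $v \notin S$. This satisfies \eqref{eq:zig-bases:cardinality} and \eqref{eq:zig-bases:vertex_cover}. It also satisfies \eqref{eq:zig-bases:clique_constrs}, because a clique contains at most one vertex of a stable set.
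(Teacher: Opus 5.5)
Your proof is correct and follows the paper's argument. For one direction you take one representative per component; using the $k$ clusters gives exactly $k$ pairwise non-adjacent vertices directly, whereas the paper picks from all $\ell \geq k$ components and obtains a stable set of size $\ell \geq k$, implicitly passing to a subset. The converse is the immediate observation that $\graph[S]$ is edgeless, and your closing remark that the singleton-column RMP is feasible is also correct and matches how the paper uses the lemma.
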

\begin{proof}{Proof}
  On the one hand, if~$\graph$ admits a~$k$-vertex cut, let~$D \subseteq \vertices$ be a
  set of vertices whose removal disconnects~$\graph$ into at least~$\ell \geq k$
  connected components~$K_1,\dots,K_{\ell}$.
  Then, for all distinct~$i,j \in \{1,\dots,\ell\}$, no pair of vertices~$v_i \in K_i$ and~$v_j
  \in K_j$ is adjacent.
  Thus, by selecting a vertex from each of the connected components, we
  create a stable set of size~$\ell \geq k$.

  On the other hand, if~$\graph$ admits a stable set~$S$ of size~$k$, then~$\vertices
  \setminus S$ is a~$k$-vertex cut of~$\graph$.\qed
\end{proof}
Based on Theorem~\ref{thm:zig-basecont} and Lemma~\ref{lem:stableset} we thus obtain the following proposition.
\begin{proposition}
  Let~$\graph = (\vertices,\edges)$ be an undirected graph with vertex costs~$\bc \in \R^\vertices$ and let~$k$ be a positive integer.
  If~$\graph$ admits a~$k$-vertex cut, then the RMP for \zigbases for the choice~$\potatoes = \{ \{v\} : v \in \vertices\}$ is feasible.
\end{proposition}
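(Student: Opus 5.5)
The plan is to exhibit an explicit feasible point of the RMP, i.e., a vector $(\bx,\potvarvec)$ that uses only the singleton columns $\potvar_{\{v\}}$, $v \in \vertices$. The natural candidate comes from Lemma~\ref{lem:stableset}. Since $\graph$ admits a $k$-vertex cut, the lemma gives a stable set $I \subseteq \vertices$ with $\card{I} = k$. I would then set
\[
  \potvar_{\{v\}} = 1,\ x_v = 0 \text{ for } v \in I, \qquad \potvar_{\{v\}} = 0,\ x_v = 1 \text{ for } v \in \vertices \setminus I.
\]
This point is integral in $\bx$ as the RMP requires, and $\potvarvec \ge \boldsymbol{0}$.

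Next I would verify the three constraint families of \zigbases in turn.
\begin{itemize}
\item Constraint~\eqref{eq:zig-bases:cardinality}: the left-hand side equals $\card{I} = k$.
\item Constraint~\eqref{eq:zig-bases:vertex_cover}: within the restricted column set, $\potatoes(v) = \{\{v\}\}$, so the left-hand side is $\potvar_{\{v\}} + x_v = 1$ for every $v$.
\item Constraint~\eqref{eq:zig-bases:clique_constrs}: for any clique $C \in \cliques'$, the restricted family $\potatoes(C)$ consists of the singletons $\{v\}$ with $v \in C$. The left-hand side is therefore $\card{C \cap I}$. A clique meets a stable set in at most one vertex, so this is at most $1$.
\end{itemize}
The same check would also cover the equality version \zigbase with the full clique family $\cliques$, since the point satisfies all constraints with equality where required.

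I do not expect a real obstacle. The only point needing care is that, when restricting to singleton columns, $\potatoes(v)$ and $\potatoes(C)$ must be read as the sub-families of available columns. The stable-set property then does all the work for the clique constraints. If the paper intends $k \geq 2$ with no further assumptions, nothing else is needed, because Lemma~\ref{lem:stableset} already converts feasibility of the $k$-VCP into the existence of the stable set.
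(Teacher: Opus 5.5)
Your proposal is correct and matches the paper's argument, which derives the proposition directly from Lemma~\ref{lem:stableset} via the same point: $\potvar_{\{v\}}=1$ on a stable set of size $k$, $x_v=1$ elsewhere, with the clique constraints holding because a clique meets a stable set in at most one vertex. One small remark: the RMP in the paper is an LP (integrality and the bounds $x_v \le 1$ are dropped), so integrality of your point is not required, though it does no harm.
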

That is, whenever~$\graph$ admits a~$k$-vertex cut, the restricted master problem is always feasible.
Moreover, if the restricted master problem is infeasible, we can conclude that~$\graph$ does not admit a~$k$-vertex cut.

The RMP contains the Constraint~\eqref{eq:zig-bases:cardinality} and the $\card{\vertices}$ Constraints~\eqref{eq:zig-bases:vertex_cover}.
{To construct the edge-covering family of cliques $\cliques'$ used in Constraints~\eqref{eq:zig-bases:clique_constrs}, we propose the following constructive algorithm that is an iterative procedure. At each step, it builds one inclusion-wise maximal clique and maintains a set of uncovered edges, which is initially equal to the original edge set.
To build a clique $C$, the algorithm selects an uncovered edge $\{u,v\}$ and initializes $C$ as the set containing its two endpoints. It then repeatedly attempts to enlarge $C$ by adding a vertex $w \in \vertices \setminus C$ such that $w$ is adjacent to every vertex already in $C$. 
When no further vertex can be added, the clique $C$ is inclusion-wise maximal. The clique is then added to the family~$\cliques'$, and all edges whose endpoints both belong to $C$ are removed from the set of uncovered edges.
The algorithm terminates when no uncovered edges remain. Both edges and vertices are processed according to the natural ordering induced by the input instance. By construction, this constructive algorithm produces at most $\card{\edges}$ cliques.}

\subsubsection{The pricing problem}
\label{sec:priceRMP}
{We solve an} LP relaxation of \zigbases via the RMP, {by relaxing the integrality
constraints on the~$\bx$-variables} to non-negativity constraints, and {by considering} only a subset~$\overline{\potatoes} \subseteq \potatoes$ {for $\potvarvec$-variables}. 

Note that we do not impose the upper bound~$x_v \leq 1$, $v \in \vertices$, in the RMP, because in every \emph{optimal} solution~$x_v \leq 1$ will hold, which is a consequence of {the last paragraph of} the proof of Proposition~\ref{prop:zigbases}. {Dropping the upper bounds on the $\bx$-variables does not affect the pricing problem.}

To find an optimal solution of the LP relaxation of \zigbases, it might not be enough to solve the RMP for the selected set~$\overline{\potatoes}$.
Given a solution~$(\bx, \potvarvec)$ of the RMP, we therefore need to decide whether there exists a variable~$\potvar_S$ for some~$S \in \potatoes \setminus \overline{\potatoes}$ with negative reduced cost.
This problem is called the pricing problem and coincides with the separation problem of a dual solution associated with~$(\bx, \potvarvec)$.
Let~$\sigma \geq 0$ be the dual variable associated with~\eqref{eq:zig-bases:cardinality}, let~$\mu_v \geq 0$, $v \in \vertices$, be the dual variables associated with~\eqref{eq:zig-bases:vertex_cover}, and let~$\pi_C \geq 0$, $C \in \cliques'$, be the dual variables associated with~\eqref{eq:zig-bases:clique_constrs}.
The dual constraints associated to the $\potvarvec$-variables are
\begin{equation*}
  \sigma + 
  \sum_{v \in S} \mu_v - \sum_{\substack{C \in \cliques' :\\ S \cap C \neq \emptyset}} \pi_C  \le 0,~~~~ S \in \overline{\potatoes}.
\end{equation*}

Given a dual solution~$(\sigma, \boldsymbol{\mu}, \boldsymbol{\pi})$, the pricing problem for~$\potvarvec$-variables is to decide if there exists~$S \in \potatoes \setminus \overline{\potatoes}$ such that
\[
  \sigma
  +
  \sum_{v \in S} \mu_v
  -
  \sum_{\substack{C \in \cliques'\colon\\ S \cap C \neq \emptyset}} \pi_C
  >
  0
  \qquad
  \iff
  \qquad
  \sum_{v \in S} \mu_v
  -
  \sum_{\substack{C \in \cliques' :\\ S \cap C \neq \emptyset}} \pi_C
  >
  -\sigma.
\]
Deciding whether such~$S$ exists is therefore equivalent to solving the optimization problem
\begin{equation}
  \label{eq:pricing}
  \max_{S \subseteq \vertices}
  \left\{
    \sum_{v \in S} \mu_v - \sum_{\substack{C \in \cliques'\colon\\ S \cap C \neq \emptyset}} \pi_C : \card{S} \geq 1
  \right\}.
\end{equation}
Indeed, there exists a violated dual constraint if and only if the maximum takes a value that is larger than~$-\sigma$.

In the following, we discuss how to solve the pricing problem.
To better understand its structure, it is useful to express it as an ILP.
To model the selected set~$S$, let~$\varphi_v$, $v \in \vertices$, be a binary variable
that takes value~1 if and only if~$v$ belongs to~$S$.
Moreover, for every clique~$C \in \cliques'$, let~$\psi_C$ be a binary variable that
takes value~1 whenever the generated subset~$S$ intersects clique~$C$ (though the
converse does not necessarily hold).
An ILP model of the pricing problem, for a given dual
solution~$(\sigma, \boldsymbol{\mu}, \boldsymbol{\pi})$, is then
\begin{align}
\label{PP_ILP}
   \max_{{\substack{\boldsymbol{\varphi} \in \{0,1\}^{\vertices}\\[0.25ex]
                   \boldsymbol{\psi} \in \{0,1\}^{\cliques'}}}}
  \left\{
     \sum_{v \in \vertices} \mu_v \, \varphi_v 
     \;-\;
     \sum_{C \in \cliques'} \pi_C \, \psi_C
     :
     \quad
     \sum_{v \in \vertices} \varphi_v \ge 1,
     \quad
     \varphi_v \le \psi_C,\;
       C \in \cliques',~ v \in C
  \right\}.
\end{align}
The objective function in~\eqref{PP_ILP} maximizes the violation of the dual constraint.
The first constraint (\textit{cardinality constraint}) prevents selecting the empty set,
while the second family of constraints (\textit{precedence constraints}) ensures that
$\psi_C$ takes value 1 whenever the chosen set~$S$ contains a vertex belonging to the
clique~$C$.

The pricing problem~\eqref{eq:pricing} can be solved in polynomial time{:} as shown by \citet{CFLMMM18}, if the cardinality constraint  is removed from~\eqref{PP_ILP}, 
the resulting ILP has a totally unimodular constraint matrix. 
Indeed, the precedence constraints define a bipartite
incidence structure between vertices and cliques, while all remaining conditions
are simple (integer) bounds. Consequently, the LP relaxation of the reduced model is
integral and can be solved efficiently.
However, removing the cardinality constraint may yield the trivial solution
$S=\emptyset$. To enforce $S\neq\emptyset$, one can solve the LP relaxation of the
reduced model once for each vertex~$v\in\vertices$, adding the fixing constraint
$\varphi_v = 1$. Total unimodularity is preserved, so each resulting LP again
admits an integral optimal solution, which corresponds to a valid non-empty set
$S_v$. Among all such sets, one achieving the largest objective value is
selected as the solution of the pricing problem.
Since a polynomial number of LPs is solved and each LP can be solved in
polynomial time, the pricing routine is polynomial as well. By the classical
equivalence between optimization and separation
\citep{GroetschelLovaszSchrijver1981}, this also implies that the LP relaxation of \zigbases can be solved in polynomial time.

Complementing the LP techniques for solving the pricing problem, we discuss a combinatorial algorithm that is faster in practice.
A variant of this algorithm was already used by \citet{CFLMMM18} for \oldBP.
To make this paper self-contained and since their explanation of this approach omits many details, we provide a detailed explanation. Moreover, we provide some important  algorithmic improvements that fully preserve the use of the combinatorial algorithm for the pricing phase. 
We exploit a result by \citet{Barahona1998} who describe how to optimize a linear objective subject to precedence constraints (like the one{s} in \eqref{PP_ILP}).
Their idea is to solve the maximization problem by computing a minimum cut in an auxiliary network~$\mathcal N = (\vertices(\mathcal N), \mathcal A(\mathcal N))$.
Note that this approach neglects the cardinality constraint and we will discuss how to enforce it at the end of the section.

The vertex set of the network is given by
$
  \vertices(\mathcal N) = \{s,t\} ~\cup~ \vertices ~\cup~ \cliques',
$
where~$s$ and~$t$ are the source and sink of the network.
The arc set~$\mathcal{A}(\mathcal N)$ consists of (i) the arcs~$(s,v)$ with capacity~$\mu_v$ for all~$v \in \vertices$, (ii) the arcs~$(C,t)$ with capacity~$\pi_C$ for all~$C \in \cliques'$, and (iii) the arcs~$(v,C)$ with infinite capacity for all~$C \in \cliques'$ and~$v \in C$. {An example of this construction is shown in Figure~\ref{fig:pricingNetwork}.}
An~$s$--$t$ cut in~$\mathcal{N}$ is a partition~$(S',T')$ of~$\vertices(\mathcal{N})$ such that~$s \in S'$ and~$t \in T'$.
The arcs crossing a cut~$(S',T')$ are the arcs~$\varUpsilon(S',T') = \{(u,v) \in \mathcal{A}(\mathcal{N}) : u \in S',\; v \in T'\}$; the capacity of the cut is the total capacity of its crossing arcs.
\citet{Barahona1998} noted that there is a one-to-one correspondence between $s$--$t$ cuts with finite capacity and feasible solutions of \eqref{PP_ILP} without the cardinality constraint.
Indeed, given a cut~$(S',T')$ of finite capacity, the corresponding set~$S=S(S',T')$ is~${\{ v \in \vertices : (s,v) \notin \varUpsilon(S',T')\}}$.
The capacity of this cut is
\[
  \sum_{v \in \vertices \setminus S} \mu_v + \sum_{\substack{C \in \cliques'\colon\\ C \cap S \neq \emptyset}} \pi_C,
\]
Thus, since
\[
  \sum_{v \in S} \mu_v - \sum_{\substack{C \in \cliques'\colon\\ C \cap S \neq \emptyset}} \pi_C
  =
  \sum_{v \in \vertices} \mu_v - \sum_{v \in \vertices \setminus S} \mu_v - \sum_{\substack{C \in \cliques'\colon\\ C \cap S \neq \emptyset}} \pi_C
  =
  \sum_{v \in \vertices} \mu_v
  -
  \bigg(
  \sum_{v \in \vertices \setminus S} \mu_v
  +
  \sum_{\substack{C \in \cliques'\colon\\ C \cap S \neq \emptyset}} \pi_C
  \bigg),
\]
an optimal solution of~\eqref{PP_ILP} without the cardinality constraint  can be found by computing a minimum cut in the auxiliary network~$\mathcal N$.

\begin{figure}[t]
  \begin{subfigure}[t]{0.2\textwidth}
    \centering
    \begin{tikzpicture}
      \tikzstyle{v} += [circle,draw=black,thick,inner sep=2pt,minimum size=2mm];

      \draw[fill=blue!30,opacity=0.5] (0,1) ellipse (0.65cm and 1.5cm);
      \draw[fill=red!30,opacity=0.5] (0,-1) ellipse (0.65cm and 1.5cm);

      \node (v1) at (0,2) [v,label=left:{\scriptsize $u$}] {};
      \node (v2) at (0,0) [v,label=left:{\scriptsize $v$}] {};
      \node (v3) at (0,-2) [v,label=left:{\scriptsize $w$}] {};

      \draw[-] (v1) -- (v2) -- (v3);

      \node (C1) at (0.25,1) {\textcolor{blue}{\scriptsize $C_1$}};
      \node (C2) at (0.25,-1) {\textcolor{red}{\scriptsize $C_2$}};
    \end{tikzpicture}
  \end{subfigure}
  \quad
  \begin{subfigure}[t]{0.35\textwidth}
    \centering
    \begin{tikzpicture}
      \tikzstyle{v} += [circle,draw=black,thick,inner sep=2pt,minimum size=2mm];


      \node (s) at (0,0) [v,fill=black,label=below:{\scriptsize $s$}] {};
      \node (v1) at (2,2) [v,label=below:{\scriptsize $u$}] {};
      \node (v2) at (2,0) [v,label=below:{\scriptsize $v$}] {};
      \node (v3) at (2,-2) [v,label=below:{\scriptsize $w$}] {};
      \node (c1) at (4,1) [v,label=below:{\scriptsize $C_1$}] {};
      \node (c2) at (4,-1) [v,label=below:{\scriptsize $C_2$}] {};
      \node (t) at (5,0) [v,label=below:{\scriptsize $t$}] {};

      \draw[->,dashed] (s) to node [above] {\scriptsize $1$} (v1);
      \draw[->,dashed] (s) to node [above] {\scriptsize $1$} (v2);
      \draw[->,dashed] (s) to node [above] {\scriptsize $1$} (v3);
      \draw[->] (v1) to node [above] {\scriptsize $\infty$} (c1);
      \draw[->] (v2) to node [above] {\scriptsize $\infty$} (c1);
      \draw[->] (v2) to node [above] {\scriptsize $\infty$} (c2);
      \draw[->] (v3) to node [above] {\scriptsize $\infty$} (c2);
      \draw[->] (c1) to node [above] {\scriptsize $3$} (t);
      \draw[->] (c2) to node [above] {\scriptsize $3$} (t);
    \end{tikzpicture}
    \subcaption{$S = \emptyset$.}\label{fig:pricingNetwork:cutA}
  \end{subfigure}
  \quad
  \begin{subfigure}[t]{0.4\textwidth}
    \centering
    \begin{tikzpicture}
      \tikzstyle{v} += [circle,draw=black,thick,inner sep=2pt,minimum size=2mm];


      \node (s) at (0,0) [v,fill=black,label=below:{\scriptsize $s$}] {};
      \node (v1) at (2,2) [v,fill=black,label=below:{\scriptsize $u$}] {};
      \node (v2) at (2,0) [v,label=below:{\scriptsize $v$}] {};
      \node (v3) at (2,-2) [v,label=below:{\scriptsize $w$}] {};
      \node (c1) at (4,1) [v,fill=black,label=below:{\scriptsize $C_1$}] {};
      \node (c2) at (4,-1) [v,label=below:{\scriptsize $C_2$}] {};
      \node (t) at (5,0) [v,label=below:{\scriptsize $t$}] {};

      \draw[->] (s) to node [above,xshift=-2mm] {\scriptsize $1+3$} (v1);
      \draw[->,dashed] (s) to node [above] {\scriptsize $1$} (v2);
      \draw[->,dashed] (s) to node [above] {\scriptsize $1$} (v3);
      \draw[->] (v1) to node [above] {\scriptsize $\infty$} (c1);
      \draw[->] (v2) to node [above] {\scriptsize $\infty$} (c1);
      \draw[->] (v2) to node [above] {\scriptsize $\infty$} (c2);
      \draw[->] (v3) to node [above] {\scriptsize $\infty$} (c2);
      \draw[->,dashed] (c1) to node [above] {\scriptsize $3$} (t);
      \draw[->] (c2) to node [above] {\scriptsize $3$} (t);
    \end{tikzpicture}
    \subcaption{$S = \{u\}$.}\label{fig:pricingNetwork:cutB}
  \end{subfigure}

  \caption{{Illustration of the auxiliary network for solving the pricing problem on a graph with three vertices $u, v, w$ and two maximal cliques, $C_1=\{u,v\}$ (blue) and $C_2=\{v,w\}$ (red). Arc capacities are given next to the arcs. 
  }}\label{fig:pricingNetwork}
\end{figure}

To solve the pricing problem, we propose a two stage procedure.
In the first stage, we find a minimum cut~$(S',T')$ in the network defined above and extract the set~$S = S(S',T')$.
If~$S \neq \emptyset$ 

The second stage is entered only when~$S = \emptyset$ and therefore
\[
  -\sigma
  <
  \sum_{v \in S} \mu_v
  -
  \sum_{\substack{C \in \cliques'\colon\\ C \cap S \neq \emptyset}} \pi_C
  =
  0.
\]
We then need to decide if there exists an alternative minimum cut~$(S'',T'')$ with~$S(S'',T'') \neq \emptyset$ whose objective value in~\eqref{PP_ILP} is greater than~$-\sigma$.
A situation in which this happens is illustrated in Figure~\ref{fig:pricingNetwork:cutA}.
Here, a cut corresponding to~$S = \emptyset$ is produced although there exist a variable with a negative reduced cost associated to the non-empty set $S = \{u\}$.
To determine such type of variables, it turns out that it is sufficient to solve~$\card{\vertices}$ auxiliary minimum cut problems as we explain next.

Assume there exists a set~$S \subseteq \vertices$ such that~$\potvar_S$ has negative reduced cost.
Let us assume that we had access to a set~$S$ of minimum reduced cost.
We then could construct an alternative dual solution~$(\bar{\sigma}, \bar{\boldsymbol{\mu}}, \bar{\boldsymbol{\pi}})$ such that the optimal value in~\eqref{PP_ILP} remains invariant by selecting~$\bar{v} \in S$ and defining
\begin{align*}
  \bar{\sigma} &= 0,
  &
  \bar{\mu}_v
  &=
  \begin{cases}
    \mu_{\bar{v}} + \sigma, &\text{if } v = \bar{v},\\[1 ex]
    \mu_v, & \text{otherwise},
  \end{cases}
  &
  \bar{\boldsymbol{\pi}}
  &=
  \boldsymbol{\pi}.
\end{align*}
Indeed, for every non-empty subset~$S' \subseteq \vertices$ that contains~$\bar{v}$, the objective value in~\eqref{PP_ILP} remains unchanged and, if~$\bar{v} \notin S'$, the objective value drops by~$\sigma$.
Therefore, the set~$S$ that we started with is still an optimizer of~\eqref{PP_ILP}.
Since the second stage is only entered when~$\sigma > 0$, this means that solving the minimum cut problem on the auxiliary network~$\mathcal{N}$ for the dual solution~$(\bar{\sigma}, \bar{\boldsymbol{\mu}}, \bar{\boldsymbol{\pi}})$ will result in finding a set~$S$ that contains~$\bar{v}$.

In practice, of course, we do not know a set~$S$ of minimum reduced cost.
But nevertheless we can adopt the ideas above.
For each~$v \in \vertices$, we solve a minimum cut problem in the network~$\mathcal{N}$, where we increase the capacity of arc~$(s,v)$ by~$\sigma$.
If one of these minimum cuts yields a non-empty set~$S$ such that its associated variable~$\potvar_S$ has negative reduced cost, we add it as an improving column to the RMP.
Otherwise, no variable with negative reduced cost exists.

{Figure~\ref{fig:pricingNetwork} illustrates the auxiliary network on an example graph with three vertices~$u$, $v$, $w$, whose edge-covering clique family consists of two cliques, $C_1 = \{u,v\}$ (highlighted in blue) and $C_2 = \{v,w\}$ (highlighted in red). In the auxiliary network, black-filled nodes denote the source side~$S'$ of the min-cut, white nodes denote the sink side~$T'$, and dashed arcs are the arcs of~$\varUpsilon(S',T')$ crossing the cut.
In this example, the dual solution is $\mu_u = \mu_v = \mu_w = 1$, $\pi_{C_1} = \pi_{C_2} = 3$ (defining the arc capacities shown next to the arcs), and $\sigma = 3$.
In Figure~\ref{fig:pricingNetwork:cutA}, the auxiliary network yields a min-cut associated to the arcs $(s,u)$, $(s,v)$, $(s,w)$, with capacity~$3$, producing the solution~$S = \emptyset$.
Augmenting the capacity of arc~$(s,u)$ by~$\sigma = 3$ (Figure~\ref{fig:pricingNetwork:cutB}) changes the min-cut: it is now associated to the arcs~$(s,v)$, $(s,w)$, and~$(C_1,t)$ with total capacity~$5 < 1 + 4 = 6$, yielding the solution~$S = \{u\}$.}

\subsubsection{The branching scheme}
\label{sec:branchRMP}
A central component of a branch-and-price algorithm is the branching scheme.
For \oldBP, \citet{CFLMMM18} suggest at two-step branching procedure.
In \zigbases, however, the~$\potvarvec$-variables are continuous and it is not necessary to branch on them.
Instead, we only branch on the~$\bx$-variables. We use the reliable pseudo cost branching rule~\citep{Achterberg2007}.
{When solving the RMP at a node of the branching tree that is different from the root node, the branching decisions on the~$\bx$-variables must be respected by the pricing problem.}

{We describe how the branching decisions are incorporated into the combinatorial pricing algorithm.}
If there exists a vertex~$v \in \vertices$ such that~$x_v = 1$ holds due to branching, then~$v$ is contained in the vertex cut and, therefore, cannot be included in the generated set $S$.
This decision can be incorporated into the pricing problem by adding the arc~$(v,t)$ of infinite capacity to the network used for solving the pricing problem.
Due to the infinite capacity, this arc will never be contained in a minimum cut, thus representing the branching decision~$x_v = 1$.

To model the branching decision~$x_v = 0$, we need to make sure that~$v$ is not contained in the neighborhood
\[
N(S) = \{w \in \vertices \setminus S : \exists u \in S \text{ with } \{u,w\} \in \edges\}
\]
of the set~$S$ that is generated by the pricing problem.
Indeed, since~$x_v = 0$, vertex~$v$ needs to be contained in some cluster.
But if~$v \in N(S)$ this would mean that~$v$ is not contained in~$S$ and the cluster~$S'$ containing~$v$ is connected with~$S$---a contradiction to a~$k$-vertex cut.
To incorporate the neighborhood condition into the pricing problem, we can add arcs~$(w, v)$ with infinite capacity for all~$w \in \vertices$ with~$\{v,w\} \in \edges$.
Thus, in a minimum cut, it is impossible to include a neighbor of~$v$ into~$S$ while excluding~$v$ from~$S$.

{For the sake of completeness, we now describe how the ILP-based pricing problem~\eqref{PP_ILP} can be modified according to the branching decisions.
In case~$x_v$ is fixed to $1$, vertex~$v$ belongs to the vertex cut and cannot be part of any generated cluster~$S$; this can be modeled by fixing~$\varphi_v = 0$.
Conversely, if~$x_v$ is fixed to $0$, a neighbor $w$ of vertex~$v$ can belong to $S$ only if $v$ is in $S$ as well; this can be modeled by including the following additional precedence constraints: $\varphi_w \le \varphi_v$, for all $w \in N(v)$. In both cases, the total unimodularity of the constraint matrix is preserved.}
\subsubsection{The separation problem for the clique-partitioning constraints~\eqref{eq:zig-base:clique_constrs}}
\label{sec:sepaRMP}

In principle, it is possible to strengthen formulation \zigbases by explicitly separating violated constraints of type \eqref{eq:zig-bases:clique_constrs}, rather than restricting ourselves to the inequalities associated with the initial edge-covering family of cliques $\cliques'$. Given a fractional solution $(\boldsymbol{x}, \potvarvec)$ of the LP relaxation, the corresponding separation problem asks whether there exists a clique $C \subseteq \vertices$ such that~$\sum_{S \in \potatoes(C)} \potvar_S > 1$,
and, if so, to identify a clique with maximum violation. 
In order to fully assess the strength of formulation~\zigbases, we also implemented a branch-and-price-and-cut algorithm in which the clique partitioning constraints~\eqref{eq:zig-bases:clique_constrs} are separated exactly at the root node. This setting allows us to exploit the full potential of these inequalities and to benchmark the best bound that can be obtained from~\zigbases. The computational results reported in Appendix~\ref{sec:sep} indicate that, for the instances considered, a comparable dual bound can already be achieved by adding only the constraints associated with the edge-covering family of cliques $\cliques'$ introduced in Section~\ref{sec:RMP}. In other words, the initial edge-covering family of cliques $\cliques'$ suffices to obtain a bound of very high quality, which is in most cases almost indistinguishable from that obtained with exact separation at the root. Since the corresponding separation problem is $\mathcal{NP}$-hard and adds a computational overhead, our final algorithm therefore does not include the exact separation of clique partitioning constraints and relies exclusively on the inequalities generated from~$\cliques'$. Further details, including the branch-and-price-and-cut performance and the proof of $\mathcal{NP}$-hardness of the separation problem, are provided in Appendix~\ref{sec:sep}.

\subsection{Symmetry handling techniques}
\label{sec:symmetry}
To further enhance solving \zigbases, we observe that the~$k$-vertex cut problem admits structurally equivalent solutions if the underlying graph is symmetric.
Consider, for example, the minimum 2-vertex cut problem on a cycle of length~$n \geq 4$, and assume its vertices are consecutively labeled by~$1,\dots, n$.
Then, a family of solutions is given by~$\{1,3\},\{2,4\},\dots,\{n-2,n\},\{1,n-1\}$, i.e., one removes two vertices which are at distance~2 from each other.
Although these solutions are structurally identical, they look differently to a solver because they are encoded by different variables.
In the MIP literature, it is well-known that, when not handled appropriately, the presence of such symmetries can considerably hinder solving integer programs by branch-and-bound \citep{PfetschRehn2019}.
For this reason, a wealth of techniques has been developed to handle symmetries within branch-and-bound in the last two decades.
The main goal of these techniques is to prune parts of the search tree when it is known that another part contains symmetric information, which arguably accelerates branch-and-bound.
The branch-and-price literature knows problem specific symmetry breaking and, of course, elimination of model symmetry by aggregation of identical subproblems. However, we may additionally have symmetry in the input instance. This can be exploited for pruning the branch-and-price tree using the generic symmetry handling machinery built into solvers.
In the following, we do exactly this when solving \zigbases. To the best of our knowledge, our application of symmetry handling (on what can be called the original variables) is the first of its kind in a branch-and-price context.

To formally introduce our approach, we require some notation.
A permutation~$\gamma$ of~$\{1,\dots,n\}$ acts on an~$n$-dimensional vector~$\by \in \R^n$ by permuting its indices, i.e., $\gamma(\by) \define (y_{\gamma^{-1}(1)},\dots,y_{\gamma^{-1}(n)})$.
Such a permutation~$\gamma$ defines a \emph{symmetry} of the integer program~$\min_{{\by \in \Z^n}}\{\sprod{\bc}{\by} : \bA\by \leq \bb\}$, where~$\bA$ is a matrix and~$\bb$ and~$\bc$ are vectors of suitable dimensions, if for all~$\by \in \Z^n$ we have that~$\bA\by \leq \bb$ holds if and only if~$\bA\gamma(\by) \leq \bb$ holds, and~$\sprod{\bc}{\gamma(\by)} = \sprod{\bc}{\by}$.
That is, $\gamma$ preserves both feasibility and the objective value of a solution.
In the following, we want to derive a class of symmetries for \zigbases.
As mentioned above, a natural class of symmetries is given by symmetries of the underlying graph.
To formally define graph symmetries, let~$\graph = (\vertices,\edges)$ be an undirected graph with vertex costs~$\bc \in \R^V$.
An \emph{automorphism} of~$\graph$ is a map~${\pi\colon \vertices \to \vertices}$ such that~$c_v = c_{\pi(v)}$ for all~$v \in \vertices$ and~$\{\pi(u),\pi(v)\} \in \edges$ for all~$\{u,v\} \in \edges$.
That is, $\pi$ can be considered a relabeling of the vertices of~$\graph$ that preserves vertex costs and adjacency.
\begin{proposition}
  \label{prop:symmetries}
  Let~$\graph = (\vertices,\edges)$ be an undirected graph with vertex costs~$\bc \in \R^V$, let~$\gamma$ be an automorphism of~$\graph$, and let~$k$ be a positive integer.
  Let~$\potvarvec = (\potvar_S)_{S \in \potatoes}$.
  Then, $\gamma$ defines a symmetry of~\zigbases, where~$\gamma(\bx) = (x_{\gamma^{-1}(1)},\dots,x_{\gamma^{-1}(n)})$ and~$\gamma(\potvarvec) = (\potvar_{\gamma^{-1}(S)})_{S \in \potatoes}$, where~$\gamma^{-1}(S) = \{\gamma^{-1}(v) : v \in S\}$.
\end{proposition}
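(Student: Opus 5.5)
The plan is to check the definition of a symmetry directly: $\gamma$ must preserve the objective value and map every feasible point of \zigbases to a feasible point. The $\potvarvec$-variables are continuous, so "feasible" here means feasible for the mixed-integer program. Throughout, write $\gamma(S)=\{\gamma(v): v\in S\}$. The map $S\mapsto\gamma(S)$ is a bijection of $\potatoes$ onto itself that preserves cardinality, so every sum over $\potatoes$ can be reindexed.

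\textbf{Objective and the first two constraint families.}
\begin{itemize}
\item Objective: the reindexed objective is $\sum_v c_v x_{\gamma^{-1}(v)}=\sum_w c_{\gamma(w)}x_w=\sum_w c_w x_w$, using $c_{\gamma(w)}=c_w$.
\item Constraint~\eqref{eq:zig-bases:cardinality}: it is a sum over all of $\potatoes$, so it is invariant under the reindexing $S\mapsto\gamma^{-1}(S)$.
\item Constraint~\eqref{eq:zig-bases:vertex_cover} for vertex $v$: evaluated at $(\gamma(\bx),\gamma(\potvarvec))$, its left-hand side is $\sum_{S\ni v}\potvar_{\gamma^{-1}(S)}+x_{\gamma^{-1}(v)}=\sum_{T\ni\gamma^{-1}(v)}\potvar_T+x_{\gamma^{-1}(v)}$. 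This is exactly the left-hand side of the original constraint for $\gamma^{-1}(v)$, so it holds.
\item Integrality of $\bx$ is preserved, because $\gamma$ only permutes coordinates.
\item Inverse direction: the same computation with $\gamma^{-1}$, which is also an automorphism, gives the "if and only if" required by the definition.
\end{itemize}

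\textbf{Clique constraints.}
\begin{itemize}
\item An automorphism maps cliques to cliques, since it preserves adjacency and is a bijection on a finite graph.
\item The same reindexing gives $\sum_{S\in\potatoes(C)}\potvar_{\gamma^{-1}(S)}=\sum_{T\in\potatoes(\gamma^{-1}(C))}\potvar_T$.
\item Hence the constraint for $C$ at the permuted point is the constraint for $\gamma^{-1}(C)$ at the original point.
\end{itemize}

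\textbf{Main obstacle: invariance of $\cliques'$.} The previous step only closes if $\gamma^{-1}(C)\in\cliques'$ whenever $C\in\cliques'$. For an arbitrary edge-covering family this can fail, for example with the greedy construction of Section~\ref{sec:initRMP}. Nor can I fall back on weaker constraints: for fractional $\potvarvec$, a constraint for $\gamma^{-1}(C)$ is not implied by edge or other clique constraints.

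I would resolve this by making the statement rely on an invariant family.
\begin{itemize}
\item Replace $\cliques'$ by its orbit closure $\{\gamma'(C): C\in\cliques',\ \gamma'\in\Gamma\}$, where $\Gamma$ is the automorphism group considered.
\item This closure is still an edge-covering family of cliques, so Proposition~\ref{prop:zigbases} guarantees the model remains correct.
\item The same argument applies verbatim if $\cliques'=\cliques$ or consists of all maximal cliques; both are automatically $\gamma$-invariant.
\item With an invariant family, the clique step above closes and the constraint system is mapped onto itself.
\end{itemize}

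If one prefers to keep an arbitrary $\cliques'$, there is a fallback, which I regard as secondary. Any two correct models of the $k$-VCP have the same feasible and optimal $x$-projections by Proposition~\ref{prop:zigbases} and Theorem~\ref{thm:zig-basecont}. Moreover, $\gamma$ maps $k$-vertex cuts to $k$-vertex cuts of equal cost, because it maps connected components of $\graph-\vertices_0$ onto those of $\graph-\gamma(\vertices_0)$. This yields symmetry of the $x$-projection, which is all that symmetry handling on the original variables uses.
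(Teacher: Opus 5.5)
Your verification of the objective, of the cardinality constraint, and of \eqref{eq:zig-bases:vertex_cover} by reindexing $S\mapsto\gamma^{-1}(S)$ is exactly the paper's proof. For the clique constraints, the paper only says that ``the same arguments as for \eqref{eq:zig-bases:vertex_cover}'' apply. That silently assumes $\gamma(\cliques')=\cliques'$. You are right that this is needed, and for continuous $\potvarvec$ it genuinely fails.

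Here is a concrete instance. Take the octahedron on $1,\bar 1,2,\bar 2,3,\bar 3$ (the non-edges are $\{i,\bar i\}$), add a disjoint edge $\{p,q\}$, use unit costs and $k=2$. Let $\cliques'$ consist of $\{p,q\}$ and the four triangles with an even number of bars; these triangles partition the octahedron's edges. The swap $\gamma$ of $3$ and $\bar 3$ is an automorphism sending these triangles to the odd ones. The point $\bx=\mathbf{1}$, $\potvar_{\{1\}}=\potvar_{\{2\}}=\potvar_{\{\bar 3\}}=\tfrac{1}{2}$, $\potvar_{\{p\}}=1$ is feasible. Its image puts $\tfrac{1}{2}$ on $\{1\},\{2\},\{3\}$ and violates the constraint of $\{1,2,3\}\in\cliques'$. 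So under your mixed-integer reading the proposition needs an invariance hypothesis, and your orbit-closure or all-maximal-cliques repair supplies it correctly.

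What you missed is the reading under which the statement holds verbatim. The paper defines symmetries for integer programs, and its proof takes $(\bx,\potvarvec)\in\{0,1\}^{\vertices}\times\{0,1\}^{\potatoes}$. Assume every vertex lies in some clique of $\cliques'$, as the paper does in the proof of Proposition~\ref{prop:zigbases}. Then for 0/1 vectors the constraints of any edge-covering family already imply $\sum_{S\in\potatoes(C)}\potvar_S\le 1$ for \emph{every} clique $C$. Suppose two distinct selected sets met $C$ at $u$ and $w$. Either $u=w$ lies in both sets and in some clique of $\cliques'$, or $\{u,w\}$ is an edge covered by some clique of $\cliques'$; in both cases that clique's constraint is violated. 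Hence the constraint for $\gamma^{-1}(C)$ holds automatically and no change of $\cliques'$ is needed. This is the argument the paper's ``same arguments'' should have supplied. Your remark that weaker constraints cannot help is correct only for fractional $\potvarvec$.

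In your secondary fallback, the claim that correct models share the same \emph{feasible} $\bx$-projection is false. Take $\bx=\mathbf{1}$ and $\potvar_{\{v\}}=1$ for the vertices $v$ of a stable set of size $k$. This point is feasible for the relaxed model, although $\vertices$ is not a $k$-vertex cut, and it is infeasible for the equality version. Only the optimal $\bx$-parts are pinned down: for positive costs they are exactly the optimal $k$-vertex cuts, which form a $\gamma$-invariant set. That route therefore gives invariance of the optimal set, not a symmetry in the paper's sense.
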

For a proof, we refer the reader to Appendix~\ref{sec:proofs}.

Knowing a class of symmetries of~\zigbases, we can apply state-of-the-art methods for handling them.
A common feature of most existing symmetry handling techniques is to define a lexicographic order on the variable space and to enforce that only those solutions can be computed that are lexicographically maximal in their class of symmetric solutions \citep{DoornmalenHojny2024a}.
In a branch-and-price context, however, the challenge arises that not all variables are known explicitly.
We therefore suggest to define the lexicographic order only on the~$\bx$-variables and ignore the~$\potvarvec$-variables for symmetry considerations.

We have experimented with two approaches for handling symmetries.
For both approaches, we assume that we are given access to a set~$\Gamma$ of automorphisms of~$\graph$.
To explain the differences between these approaches, let us assume~${\vertices = \{1,\dots,n\}}$.

The first approach defines a global lexicographic order~$\lexgeq{}$, where~$\bx \lexgeq{} \by$ holds for vectors~$\bx,\by \in \R^n$ if~$\bx = \by$ or, for the first position~$i$ at which~$\bx$ and~$\by$ differ, we have~$x_i > y_i$.
Given an automorphism~$\gamma\in\Gamma$, we enforce~$\bx \lexgeq{} \gamma(\bx)$ by the lexicographic reduction algorithm~\citep{DoornmalenHojny2024a}.

The second approach defines for every node of the branch-and-bound tree a possibly different partial lexicographic order.
To define the order at a node~$b$ of the branch-and-bound tree, let~$x_{i_1},\dots,x_{i_j}$ be the ordered sequence of branching variables from the root node to node~$b$.
Two vectors~$\bx, \by \in \R^n$ then satisfy the partial lexicographic order~$\lexgeq{,b}$ at node~$b$ if~$(x_{i_1},\dots,x_{i_j}) \lexgeq{} (y_{i_1},\dots,y_{i_j})$.
\citet{Ostrowski2009} has shown that a valid symmetry handling approach is to enforce~$\lexgeq{,b}$ at every node~$b$ of the branch-and-bound tree.
We enforce this by the lexicographic reduction algorithm and the orbital fixing method~\citep{Margot2003,OstrowskiEtAl2011}, which are both shown to be compatible by~\citet{DoornmalenHojny2024a}.

Preliminary experiments have shown that the second approach yields better reductions of running time.
For this reason, we will only discuss the impact of the second approach in our computational study.

\subsection{Further algorithmic enhancements}
\label{sec:furtherEnhancements}

{Two additional components are used to enhance the performance of our branch-and-price algorithm.
The first is the \emph{connectivity cut}, a valid inequality that strengthens the LP relaxation of \zigbases by enforcing a lower bound on the total vertex cut cost derived from the weighted vertex connectivity of the graph; full details are provided in Appendix~\ref{sec:connectivity_cut}.
The second is the \emph{Iterative Disconnection Heuristic} (IDH), a constructive algorithm that produces a high-quality initial feasible solution for the $k$-VCP, enabling early pruning of the BP tree; the implementation details are given in Appendix~\ref{sec:heur}.}

\color{black}
\section{Computational experience}
\label{sec:COMP}

{Our computational study pursues three main objectives.
The first goal is to assess the impact of the main algorithmic enhancements introduced in the new BP algorithm based on \zigbases, described in Section~\ref{sec:ALGO}, denoted by BP$^\star$, and implemented in \texttt{SCIP},  a framework and solver for ILP models that natively supports all elements of a full branch-and-price algorithm.
The second is to compare the performance of BP$^\star$ with that of the BP algorithm based on \oldBP proposed by \citet{CFLMMM18} and denoted by BP$^{\dagger}$.
The third goal is to benchmark BP$^\star$ against state-of-the-art exact approaches from the literature, namely those based on \ILPcomp and \ILPnat, both tackled using \texttt{CPLEX}, a commercial solver for ILP models.
Throughout this section, \textsc{COMP} denotes the approach that uses \texttt{CPLEX} to solve \ILPcomp, strengthened by a symmetry-breaking preprocessing as proposed in \citet{CFLMMM18}.
We denote by \textsc{HYB} the BC algorithm based on an enhanced version of \ILPnat, implemented within \texttt{CPLEX}, which natively supports branch-and-cut.
This model is further strengthened by incorporating a coefficient downlifting procedure for Constraints~\eqref{INT_R_0} together with additional valid inequalities, as proposed by \citet{FurLMP2020}.}

All experiments are conducted on the benchmark instances for the $k$-VCP introduced by \citet{FurLMP2020}.
The benchmark contains 304 instances, each considered in two variants: an unweighted version, where all vertices have unit cost, and a weighted version obtained by assigning integer vertex costs in the interval $[1,10]$.
The instances cover values of $k \in \{5,10,15,20\}$.
A detailed description of the full benchmark set of instances is provided in {Appendix}~\ref{sec:compInstances}.

All experiments comparing BP$^\star$ against BP$^\dag$, HYB, and COMP were executed single-threaded with a time limit of one hour on the compute cluster of the Chair of Operations Research at RWTH Aachen University, running Debian~11. The implementations of HYB and COMP are publicly available at \url{https://github.com/paoloparonuzzi/k-Vertex-Cut-Problem}, while the code for BP$^\dag$ was obtained through personal communication with the authors.
The cluster nodes are equipped with Intel Xeon L5630 quad-core processors operating at \SI{2.13}{\giga\hertz} and \SI{16}{\giga\byte} of DDR3 RAM.

The remainder of this section is organized as follows.
Section~\ref{sec:compDetails} describes the computational setup  and the implementation details of BP$^\star$.
Section~\ref{sec:ablation} presents an ablation study aimed at quantifying the contribution of each algorithmic component to the overall performance of BP$^\star$ and {at comparing BP$^\star$ with BP$^\dagger$.}
In Section~\ref{sec:comparison_exact} we report a detailed performance comparison between BP$^\star$ and the other state-of-the-art exact approaches, namely \textsc{COMP} and \textsc{HYB}. Finally, in Section~\ref{sec:lp_comparison}, we compare the strength of the LP relaxations of the different ILP models.

\subsection{Computational setup and implementation details  of BP$^\star$}
\label{sec:compDetails}

 BP$^\star$ has been implemented in \texttt{C++} using the BP framework \texttt{SCIP}~9.2.3 \citep{SCIP9}, which we extended through custom plugins to support the tailored column generation scheme.
LP relaxations within \texttt{SCIP} are solved using the LP solver \texttt{SoPlex}~7.1.5. {This choice is motivated by its native integration within the \texttt{SCIP} optimization suite and its availability, which aligns with our goal of developing a fully open-source algorithm. Moreover, preliminary tests on the benchmark set of unweighted instances indicated that using \texttt{CPLEX} as an LP solver did not yield substantial performance gains: when using \texttt{SoPlex}, we were indeed able to solve one additional instance within a one-hour time limit, compared to \texttt{CPLEX}.}
Maximum flow problems arising in the pricing subproblem and in the initial IDH are solved using the \texttt{LEMON} graph library~\citep{lemon}.
The complete \texttt{C++} source code, together with all benchmark instances and detailed computational results, is publicly available at \url{https://doi.org/10.5281/zenodo.17897865}.
This implementation is intended to facilitate reproducibility and to serve as a reference platform for the development and fair comparison of future exact algorithms for the $k$-VCP and related problems.

\subsection{Evaluation of components of BP$^\star$ and comparison with BP$^\dagger$}
\label{sec:ablation}

{Extensive preliminary experiments showed that the construction of the edge-covering family of cliques used in Constraints~\eqref{eq:zig-bases:clique_constrs} has a significant impact on the performance of BP$^\star$. For this reason, in addition to the constructive algorithm described in Section~\ref{sec:initRMP}, we test two alternative strategies.
The first alternative uses a modified version of the constructive algorithm in which, during the construction of a clique, a vertex is added only if it is connected to all vertices currently in the clique by uncovered edges. This modification produces an \textit{edge-partitioning} family of cliques, meaning that each edge belongs to exactly one clique. We denote the corresponding variant of BP$^\star$ by BP$^\star_a$.
The second alternative uses the edge set itself as an edge-partitioning family of cliques. The corresponding variant of BP$^\star$ is denoted by BP$^\star_b$.}

{A second key factor in the performance of BP$^\star$ is the relaxation of Constraints~\eqref{eq:zig-base:cardinality} and \eqref{eq:zig-base:vertex_cover} from equalities to inequalities, as in \eqref{eq:zig-bases:cardinality} and \eqref{eq:zig-bases:vertex_cover}. We therefore test a third variant of BP$^\star$, denoted as BP$^\star_c$, which uses the equality constraints.}

Figure~\ref{fig:ablation_perf} shows performance profiles \citep[see][]{Dolan2002} comparing BP$^\star$, its three variants, and BP$^\dagger$ on the 304 unweighted instances, with a time limit of one hour. For a fair comparison, we re-ran BP$^\dagger$ on our hardware using a recent version of \texttt{CPLEX} (22.1.0) as LP solver, obtaining results consistent with those reported in \citet{FurLMP2020}. Indeed, \citet{FurLMP2020} report only three more instances solved within the time limit: this gap can be explained by the significantly higher computational power of the hardware used in their experiments. The solid red curve represents BP$^\star$, the three dash-dotted curves correspond to the BP$^\star$ variants introduced above, and the dotted blue curve represents BP$^\dagger$.

\begin{figure}[h]
    \centering
    \input{FIGURES/ablation/all_k}
    \caption{Performance profiles comparing BP$^\star$ with its three variant and BP$^\dagger$ on the benchmark set of unweighted instances, with a time limit of one hour.
    }
    \label{fig:ablation_perf}
\end{figure}

Figure~\ref{fig:ablation_perf} highlights the strong impact of the construction of the clique family $\cliques'$ on the performance of~BP$^\star$. In particular, both BP$^\star_a$ and BP$^\star_b$ are clearly dominated by BP$^\star$: moving from an edge-covering family of cliques to an edge-partitioning one, and then to single edges, leads to a progressive deterioration of performance, confirming that the construction of $\cliques'$ is a key component of the algorithm. This is also reflected in the number of instances solved within the time limit: BP$^\star$ solves 242 instances, compared with 189 for~BP$^\star_a$ and only 122 for BP$^\star_b$.
A similar effect is observed when comparing BP$^\star$ with BP$^\star_c$. The performance profile of BP$^\star_c$ grows substantially more slowly and plateaus at a lower fraction of solved instances, with 205 instances solved versus 242 for BP$^\star$. This confirms that using Constraints~\eqref{eq:zig-bases:cardinality} and~\eqref{eq:zig-bases:vertex_cover} yields a significant performance gain.
Finally, we note that the implementation of BP$^\dagger$ uses an edge-partitioning family of cliques for Constraints~\eqref{eq:EF_2}, as in BP$^\star_a$. BP$^\dagger$ solves 165 instances and is therefore substantially outperformed by BP$^\star$ and all its variants except BP$^\star_b$. Interestingly, the comparison between BP$^\dagger$ and BP$^\star_a$ shows that, even when using the same type of clique family, the new algorithm achieves a marked improvement.

{In the following, we highlight the main differences between BP$^\star$, based on \zigbases, and BP$^\dagger$, based on~\oldBP, and discuss the reasons we think are behind their performance gap. We first observe that the construction of the clique family $\cliques'$ has a significant impact on the strength of the LP relaxation of both ILP formulations (see also Section~\ref{sec:lp_comparison}). In particular, the edge-covering family of cliques used in BP$^\star$ induces stronger constraints than the edge-partitioning family adopted in BP$^\dagger$. Moreover, on the unweighted benchmark instances, constructing an edge-partitioning family of cliques produces 392 cliques on average, whereas the edge-covering construction used in BP$^\star$ produces 194 cliques on average. Hence, the latter not only yields stronger constraints, but also does so with a smaller initial family, which further contributes to the improved performance of BP$^\star$.}

{Another important difference concerns Constraints \eqref{eq:EF_3} and \eqref{eq:zig-bases:cardinality}. BP$^\dagger$ enforces the equality constraint~\eqref{eq:EF_3}, which requires selecting exactly $k$ clusters, whereas BP$^\star$ uses the inequality constraint~\eqref{eq:zig-bases:cardinality}, which only requires selecting at least $k$ clusters. This difference can significantly affect the column generation process. In particular, the inequality constraint does not force the algorithm to generate columns corresponding to clusters that merge multiple connected components, if these are associated with already existing columns.
For instance, suppose that columns associated with two clusters, each containing a single connected component, have already been generated: if an optimal solution of \oldBP requires a single cluster obtained by merging these two components, BP$^\dagger$ must continue the column generation process until the corresponding merged cluster is produced. In contrast, BP$^\star$ may terminate earlier, since the two connected components can be selected separately and still satisfy Constraint \eqref{eq:zig-bases:cardinality}.}

{The column generation process of the two algorithms is also affected by an additional element of difference between them. BP$^\dagger$ enforces Constraints~\eqref{eq:EF_1}, which require that each vertex belongs to at most one cluster in any feasible solution of \oldBP. As introduced in Section~\ref{sec:ext}, BP$^\dagger$ adopts a bilevel branching scheme: at the first level, a partially covered vertex $v \in \vertices$ (i.e., one for which $\sum_{S \in \potatoes(v)} \potvar_S$ is fractional) is selected, and two child nodes are created by forcing $v$ either to belong to the cut ($\sum_{S \in \potatoes(v)} \potvar_S = 0$) or to a cluster ($\sum_{S \in \potatoes(v)} \potvar_S = 1$). Such branching decisions may require the generation of new columns in order to restore feasibility of the RMP.
In contrast, BP$^\star$ uses Constraints~\eqref{eq:zig-bases:vertex_cover}, which impose that a vertex not assigned to the cut must be covered by a selected cluster, and vice versa. These constraints are formulated as inequalities, allowing a (fractional) LP solution to simultaneously assign a vertex to a cluster and to the cut. This is particularly advantageous during column generation. For instance, when branching forces a vertex into the cut, BP$^\star$ does not necessarily require the generation of new columns associated with clusters that explicitly exclude that vertex: indeed, the restricted master problem can remain feasible by reusing existing cluster columns. }

{Furthermore, as detailed in Section~\ref{sec:ext}, first-level branching on individual vertices in BP$^\dagger$ is not sufficient to enforce integrality. As a consequence, BP$^\dagger$ applies a second branching level based on a Ryan--Foster-type scheme on pairs of vertices: when two vertices $u,v\in\vertices$ are partially assigned to the same cluster, two child nodes are generated by enforcing that $u$ and $v$ must either belong to the same cluster or to different clusters.
This additional branching level may significantly enlarge the search tree, as fractional solutions can persist even when all vertices are integrally assigned. Moreover, it complicates the pricing subproblem, since newly generated columns must respect pairwise compatibility decisions. In contrast, BP$^\star$ does not require any additional branching on vertex pairs, leading to a smaller search tree and keeping the pricing problem unchanged.}

Finally, on the implementation side, the code by \citet{CFLMMM18} uses a custom branch-and-bound framework to implement BP$^\dagger$, where the branching tree is explored in a depth-first fashion, and first-level branching decisions are based on the lowest-index vertex~$v$ for which~${\sum_{S \in \potatoes(v)} \potvar_S \notin \Z}$ in an LP solution.
On the other hand, BP$^\star$ makes use of the branch-and-bound framework of \texttt{SCIP}.
Next to more sophisticated tree exploration and branching rules, \texttt{SCIP} also implements advanced methods such as strong branching \citep{ApplegateBCC1995} which help to reduce the size of the branching tree.
Moreover, the implementation of BP$^\star$ benefits from the primal heuristics that are available in \texttt{SCIP}, whereas the implementation of BP$^\dagger$ requires LP solutions to be integral to improve the primal bound.

\subsubsection{Evaluation of additional components of BP$^{\star}$}

We also evaluated the impact of three additional features: the symmetry breaking techniques detailed in Section \ref{sec:symmetry}, providing BP$^\star$ with the initial primal solution returned by IDH (see Section \ref{sec:furtherEnhancements}) and adding the connectivity cut \eqref{eq:connectivity_cut} to \zigbases. Symmetry handling on the $\bx$-variables has only a modest visible impact on the running time for most instances, since the corresponding branching trees are relatively small and the techniques detailed in Section~\ref{sec:symmetry} are particularly effective on large search trees.
Nonetheless, their inclusion allows BP$^\star$ to solve four additional instances within the time limit compared with the variant of the algorithm without symmetry handling, which proves useful in terms of the overall number of instances solved and motivates keeping these techniques in the final version of the algorithm. 
Providing BP$^\star$ with the initial solution returned by IDH improves the performance of the algorithm as well: with an initial primal solution, BP$^\star$ solves eight more instances within the time limit than the variant started without any initial solution.  Similarly, the connectivity cut is generally beneficial and is therefore included in our default configuration. Enabling it for moderate values of $k$ ($\le 15$), as discussed in Section \ref{sec:furtherEnhancements}, allows BP$^\star$ to solve three additional instances within the time limit compared with the variant without it.

\subsection{Quantitative numerical results} \label{sec:algoSpecs}
Table \ref{tab:zig-stats} summarizes the behavior of the BP$^\star$ configuration on the unweighted benchmark, grouped by value of $k$. 
For each block of instances, we report the total number of instances and that of the optimally solved ones (\#inst, \#opt), the average size of the initial clique family and the average size of the cliques in terms of number of vertices (\#clq, \#vert), the average solution time and the portion of time spent in pricing (time total, pricing) on the optimally solved instances, the average number of generated columns (total, at the root node, and in Farkas pricing iterations) for the closed instances, the average number of explored branch-and-bound nodes and tree depth (B\&B nodes, depth) in the optimally solved instances, and finally the average integrality gap on the subset of instances whose optimal value is known and for which our best configuration algorithm manages to compute a valid dual bound at the root node within the time limit.
More precisely, let $z^\star$ denote the optimal value of a given instance, whenever it is known either from our runs or from the literature \citep[see][]{FurLMP2020}, and let $z^{\mathrm{LP}}$ be the value of the root-node LP relaxation produced by BP$^\star$. For each such instance on which BP$^\star$ computes a valid dual bound $z^{\mathrm{LP}}$ within the time limit, we define the \emph{integrality gap} as $100 \cdot \frac{z^\star - z^{\mathrm{LP}}}{z^\star}$, which is therefore expressed as a percentage.

Overall, BP$^\star$ solves $242$ out of $304$ unweighted instances, i.e., roughly 80\% of the test set, with fairly stable performance across different values of $k$. 
The size of the initial edge-covering family of cliques remains moderate (about 195 cliques on average, with a mean of around 7 vertices per clique), with an increase in the number of clique---and an associated decrease in the number of vertices per clique---as the value of $k$ increases. 
Average total running times for solved instances are on the order of one to two minutes (104.3 seconds on average), with pricing accounting for about one third of the time, which confirms that column generation is not a bottleneck component from the computational perspective.
The number of generated columns is in the low thousands (about 2{,}760 on average), with a substantial fraction already produced at the root node and a relatively small number of Farkas columns.
The branch-and-bound tree remains shallow and compact on the solved instances: on average, fewer than 50 nodes are explored, with depth around 5, which is consistent with a strong LP relaxation and effective branching. 
Finally, the average integrality gaps are very small: on instances whose optimal value is known and for which BP$^\star$ computes a valid root-node dual bound within the time limit, the average difference between this LP bound and the optimum is about $11\%$. This indicates that the LP relaxation of our formulation is quite strong and that often our branch-and-price starts from a high-quality bound. 

\begin{table}[h!]
\small
\renewcommand\arraystretch{1.7}
\tabcolsep=3pt
\caption{Summary statistics for the final BP$^\star$ configuration on the benchmark set of unweighted instances, grouped by value of $k$, with a time limit of one hour.}
\label{tab:zig-stats}
\centering
\begin{tabular}{lrrccccccccccccccc}
\hline
        &        &       &  & \multicolumn{2}{c}{cliques} &  & \multicolumn{2}{c}{time} &  & \multicolumn{3}{c}{gen cols} &  & \multicolumn{2}{c}{branching tree} &  &          \\ \cline{5-6} \cline{8-9} \cline{11-13} \cline{15-16}
        & \#inst & \#opt &  & \#clq        & \#vert       &  & total      & pricing     &  & total    & root     & farkas &  & nodes       & depth      &  & 
        gap (\%) \\ \cline{1-3} \cline{5-6} \cline{8-9} \cline{11-13} \cline{15-16} \cline{18-18} 
        &        &       &  &              &              &  &            &             &  &          &          &        &  &             &            &  &          \\[-2ex]
$k=5$   & 107    & 85    &  & 176.3        & 8.1          &  & 98.6       & 18.3        &  & 2,395.3  & 1,124.6  & 57.4   &  & 56.1        & 4.4        &  & 14.0     \\
$k=10$  & 80     & 63    &  & 195.7        & 6.4          &  & 160.4      & 72.1        &  & 3,462.9  & 843.4    & 186.7  &  & 72.3        & 6.6        &  & 11.5     \\
$k=15$  & 65     & 50    &  & 201.5        & 5.5          &  & 48.9       & 12.7        &  & 2,124.9  & 1,200.1  & 5.4    &  & 10.2        & 3.2        &  & 7.1      \\
$k=20$  & 52     & 44    &  & 226.9        & 5.3          &  & 98.0       & 57.7        &  & 3,198.7  & 894.8    & 410.1  &  & 30.4        & 6.4        &  & 10.2     \\
        &        &       &  &              &              &  &            &             &  &          &          &        &  &             &            &  &          \\[-2ex] \cline{1-3} \cline{5-6} \cline{8-9} \cline{11-13} \cline{15-16} \cline{18-18} 
Tot/Avg & 304    & 242   &  & 195.4        & 6.6          &  & 104.3      & 38.3        &  & 2,763.4  & 1,025.2  & 144.4  &  & 46.1        & 5.1        &  & 11.2     \\ \hline
\end{tabular}
\end{table}

\subsection{Computational comparison between exact approaches}
\label{sec:comparison_exact}

In this section, we compare the three exact approaches COMP, HYB, and BP$^\star$ on the benchmark set, considering both unweighted and weighted instances and a time limit of 3600 seconds. {To ensure a fair comparison, we re-executed \textsc{COMP} and \textsc{HYB} on our hardware using the same time limit and a more recent version of \texttt{CPLEX} (22.1.0). Overall, the relative performance trends of \textsc{COMP} and \textsc{HYB} are in line with those reported in \citet{FurLMP2020}.
Nevertheless, since the computational platform used in \citet{FurLMP2020} was more powerful than ours, some of the hardest instances are not closed within the time limit in our experiments. As a result, we solve fewer instances than what reported in \citet{FurLMP2020}: on the unweighted set, we solve 7 and 3 fewer instances with \textsc{HYB} and \textsc{COMP}, respectively, while on the weighted set both methods solve 7 fewer instances.}

Table~\ref{tab:agg_k_results} reports, for each $k \in \{5,10,15,20\}$, the number of instances solved to proven optimality by each algorithm. On unweighted instances, BP$^\star$ solves 242 out of 304 cases, against 195 for HYB and 165 for COMP.
The advantage of BP$^\star$ over COMP increases with $k$: for $k=5$ the two methods are relatively close whereas, for $k=20$, BP$^\star$ solves more than twice as many instances as the other algorithm. Conversely, the performance gap between BP$^\star$ and HYB decreases as $k$ grows: although BP$^\star$ keeps solving almost the same fraction of instances for all values of $k$, the performance of HYB improves with $k=15$ and $k=20$, thus reducing the gap between the two algorithms from 18 instances ($k=5$) to just 6 instances ($k {\in \{15,20\}}$).
The results are similar on weighted instances, where BP$^\star$ solves 239 instances, while HYB and COMP solve 201 and 170 instances, respectively.
For $k=5$, COMP is slightly ahead of the other two algorithms, confirming its effectiveness on instances with a smaller value of $k$, but for $k \ge 10$ BP$^\star$ obtains the largest number of solved instances in all cases.
Overall, these results indicate that BP$^\star$ dominates HYB and COMP in terms of number of closed instances on both test sets, and across almost all values of $k$.

\begin{table}[h!]
\centering
\small
\renewcommand\arraystretch{1.1}
\tabcolsep=10pt
\caption{Performance comparison of the exact algorithms COMP, HYB and BP$^\star$ on the full set of weighted and unweighted instances with a time limit of one hour.}
\label{tab:agg_k_results}
\begin{tabular}{l r rr rrr rr rrr}
\toprule
 &  && 
\multicolumn{3}{c}{\textbf{Unweighted instances}} 
&& 
\multicolumn{3}{c}{\textbf{Weighted instances}} \\
 \cmidrule(lr){4-6} \cmidrule(lr){8-10}

 &  && BP$^\star$ & HYB & COMP 
&& BP$^\star$ & HYB & COMP \\

 & \#inst && \#opt & \#opt & \#opt 
&& \#opt & \#opt & \#opt \\
\cmidrule(lr){1-2} \cmidrule(lr){4-6} \cmidrule(lr){8-10}
\\[-1.5ex]

$k=5$    & 107 && 85 & 67 & \textbf{88} && 85 & 68 & \textbf{90} \\
$k=10$   & 80  && \textbf{63} & 46 & 32 && \textbf{61} & 49 & 34 \\
$k=15$   & 65  && \textbf{50} & 44 & 28 && \textbf{50} & 45 & 28 \\
$k=20$   & 52  && \textbf{44} & 38 & 17 && \textbf{43} & 39 & 18 \\
\\[-1.2ex]
\cmidrule(lr){1-2} \cmidrule(lr){4-6} \cmidrule(lr){8-10}

Tot      & 304 && \textbf{242} & 195 & 165 && \textbf{239} & 201 & 170 \\
\bottomrule
\end{tabular}
\end{table}

The trends observed in Table~\ref{tab:agg_k_results} are confirmed by the plots in Figure~\ref{fig:survival_plots}, showing the cumulative percentage of instances solved to optimality over time by each of the three algorithms.
For both unweighted and weighted instances, the curve of BP$^\star$ lies above those of HYB and COMP over almost the entire time horizon.
In the weighted case, HYB and BP$^\star$ exhibit very similar performance for running times up to about one second, but afterwards the curve of BP$^\star$ grows more rapidly and remains clearly ahead.
This shows that BP$^\star$ is not only competitive on the easiest instances, but also both faster and more effective on the more difficult ones, namely those requiring tens or hundreds of seconds to be solved to optimality.

\begin{figure}[h]
    \centering

    \begin{subfigure}[t]{0.48\textwidth}
        \centering
        \input{FIGURES/survivalPlots/unweighted}
        \vspace{-1em}
    \end{subfigure}
    \hfill
    \begin{subfigure}[t]{0.48\textwidth}
        \centering
        \input{FIGURES/survivalPlots/weighted}
        \vspace{-1em}
    \end{subfigure}
    \smallskip
    \caption{{Cumulative percentage of instances solved to optimality over time by each algorithm on the complete sets of 304 unweighted instances (left) and 304 weighted instances (right), with a time limit of one hour.}}
    \label{fig:survival_plots}
\end{figure}

{To further assess the contribution of BP$^\star$ relative to previously available exact approaches, we compare our results with the instance-wise outcomes reported in \citet{FurLMP2020}, obtained under the same time limit but on a substantially more powerful hardware. In that study, the computational campaign involved three methods, namely \textsc{COMP}, \textsc{HYB}, and BP$^\dagger$. An instance is considered previously solved if it was solved to proven optimality by at least one of these approaches in \citet{FurLMP2020}.
With respect to this baseline, BP$^\star$ closes 73 additional instances that were not solved by any exact method in the reference study: 33 unweighted and 40 weighted instances. For reference, among the three algorithms tested in \citet{FurLMP2020}, \textsc{HYB} closes 33 instances not solved by either of the other methods (17 unweighted and 16 weighted), \textsc{COMP} closes 8 (2 unweighted and 6 weighted), and BP$^\dagger$ closes 3 (1 unweighted and 2 weighted).
Finally, some instances that were solved in \citet{FurLMP2020} are not closed by BP$^\star$ within the time limit in our experiments.
Detailed instance-wise results, including optimal or best known objective values, are reported in Appendix~\ref{sec:suppTables}, where we highlight both the instances solved for the first time by BP$^\star$ and those on which BP$^\star$ fails while at least one method from \citet{FurLMP2020} succeeds.}

\subsection{Strength of the LP relaxations} \label{sec:lp_comparison}

In this section, we compare the strength and computational cost of the LP relaxations associated with the different formulations considered in this paper. The goal of this analysis is twofold: on the one hand, to quantify how the choice of the initial clique family in \zigbases (edge-covering family of cliques, edge-partitioning family of cliques, or single edges) affects the quality of the root-node bound; on the other hand, to position the resulting relaxations with respect to previously proposed models, namely the enhanced version of~\ILPnat introduced in Section \ref{sec:COMP} and detailed in \citet{FurLMP2020} (denoted as \ILPhyb), the natural formulation~\ILPnat, and the compact formulation \ILPcomp with the symmetry breaking enhancement mentioned in Section \ref{sec:COMP} (denoted as \ILPcompstar).

Table~\ref{tab:lp-strength} reports, for each value of $k$, the average integrality gap between the LP bound and the optimal integer solution value (columns “gap”) and the average solution time of the root-node LP (columns “time”) for all formulations under comparison, on the subset of instances whose optimal solution value is known and such that it was possible to compute a valid LP bound within one hour of running time. Variants \zigbases, \zigbasesa, and \zigbasesb correspond to the three choices of initial clique family $\cliques'$: \zigbases uses the edge-covering family of cliques described in Section~\ref{sec:RMP}, \zigbasesa uses the edge-partitioning family of cliques of \citet{CFLMMM18}, and \zigbasesb replaces cliques by single edges. None of these variants includes the connectivity cut introduced in Section~\ref{sec:furtherEnhancements}, so as to isolate the effect of the initial clique family on the strength of the relaxation.

Among all variants, \zigbases consistently has the strongest relaxation: its average gaps range from 15.6\% for~$k=5$ down to 6.4\% for $k=20$, with an overall average of 11.1\%.
The two variants based on weaker initial clique families yield progressively weaker bounds: \zigbasesa has gaps between 22.9\% and 8.3\%, while \zigbasesb is much weaker, with gaps between 52.3\% and 23.6\%, comparable to those of the natural formulation.
This shows that the initial clique family is a decisive modeling aspect: moving from an edge-covering family of cliques to an edge-partitioning family of cliques, and further to single edges, quickly weakens the LP relaxation down to the level of the natural model.

As a computational counterpart to the theoretical results in Section~\ref{sec:strength}, we observe that \zigbasesb and \ILPnat provide the same LP bound (identical gap entries for each $k$); nonetheless, \zigbasesb requires substantially more time to compute it, especially for $k=5$ and $k=10$.
In terms of relaxation strength alone, \zigbasesb therefore does not improve over the natural formulation, while incurring a higher computational overhead.
Such overhead can be explained by the high number of columns generated at the root node (roughly 50\% more than \zigbases), but also by the increased complexity of the pricing problem, which takes more time to be solved due to the greater number of cliques considered.

The hybrid model \ILPhyb yields intermediate gaps, systematically better than \ILPnat but clearly looser than \zigbases, with average gaps around 34\% versus 11.1\% for \zigbases.
Finally, the compact formulation \ILPcompstar has by far the weakest relaxation, with gaps close to 90\% across all values of $k$, although with very small LP times.

Overall, the table illustrates that the edge-covering family of cliques used in \zigbases is key to obtaining a much stronger LP bound than all other formulations considered.
Although computing the LP relaxation of \zigbases requires more root-node time than \ILPhyb, \ILPnat, or \ILPcompstar, its LPs remain reasonably fast to solve and the improvement in bound quality is substantial.
This tighter relaxation of~\zigbases is a promising indication of the effectiveness of BP$^\star$ and is likely to translate, in the full branch-and-price algorithm, into fewer explored nodes and a larger number of instances closed within the time limit compared with the other existing approaches.

\begin{table}[h!]
\centering
\small
\renewcommand\arraystretch{2}
\tabcolsep=4.5pt
\caption{Average integrality gaps (in \%) and root-node solution times (in seconds) for all formulations on the unweighted benchmark set, grouped by $k$}
\label{tab:lp-strength}
\begin{tabular}{lrrrrrrrrrrrrrrrrrrrr}
\toprule
        &        &  & \multicolumn{2}{c}{\textbf{\zigbases}} &  & \multicolumn{2}{c}{\textbf{\zigbasesa}} &  & \multicolumn{2}{c}{\textbf{\zigbasesb}} &  & \multicolumn{2}{c}{\textbf{\ILPhyb}} &  & \multicolumn{2}{c}{\textbf{\ILPnat}} &  & \multicolumn{2}{c}{\textbf{\ILPcompstar}} \\ 
\cline{4-5} \cline{7-8} \cline{10-11} \cline{13-14} \cline{16-17} \cline{19-20}
$k$     & \#inst &  & gap & time &  & gap & time &  & gap & time &  & gap & time &  & gap & time &  & gap & time \\ 
\cline{1-2} \cline{4-5} \cline{7-8} \cline{10-11} \cline{13-14} \cline{16-17} \cline{19-20}


5  & 88 && \textbf{15.6} & 59.3  && 22.9 & 119.8 && 52.3 & 193.9 && 39.6 & 17.0 && 52.3 & 1.9  && 90.6 & 0.2 \\
10 & 69 && \textbf{10.9} & 61.8  && 16.5 & 196.1 && 40.4 & 149.4 && 37.2 & 29.6 && 40.4 & 2.8  && 92.2 & 1.9 \\
15 & 52 && \textbf{7.5}  & 31.6  && 10.8 & 90.0  && 26.9 & 72.6  && 26.5 & 13.6 && 26.9 & 3.8  && 88.5 & 7.6 \\
20 & 42 && \textbf{6.4}  & 48.2  && 8.3  & 120.0 && 23.6 & 98.7  && 23.2 & 13.0 && 23.6 & 5.3  && 90.3 & 10.6 \\

\midrule
Tot/Avg & 251 && \textbf{11.1} & 52.4 && 16.2 & 134.6 && 38.9 & 140.6 && 33.5 & 19.1 && 38.9 & 3.1 && 90.5 & 4.0 \\
\bottomrule
\end{tabular}
\end{table}

\section{Conclusions}
\label{sec:CONCL}

Despite its practical relevance for numerous areas such as network design or vulnerability assessment, existing methods for exactly solving the~$k$-VCP are not powerful enough to solve medium sized instances.
This article therefore took a fresh look into the problem and identified unused potential in an existing branch-and-price framework:
while all variables in the previous extended formulation were required to take binary values, our in-depth study revealed that the integrality requirement on those variables can be dropped when introducing the binary variables of the natural formulation.
This new perspective on the extended model allowed us to significantly simplify the underlying branching scheme and to enhance the new branch-and-price algorithm by novel components.
While results by \citet{FurLMP2020} suggest that branch-and-price was not the right technique for tackling the~$k$-VCP, we demonstrate in an extensive numerical study that, when properly set up, the new branch-and-price substantially improves on the state of the art.
Our algorithmic framework thus not only provides a new benchmark for solving the~$k$-VCP to proven optimality, but also changes the perspective on how to solve the~$k$-VCP: branch-and-price strikes back by solving for the first time 73 open instances.

Moreover, although our algorithm can be considered state of the art, there are still instances in our test set that remain unsolved. This suggests several directions for further improvement. One promising line of research is the development of more powerful presolving routines aimed at simplifying particularly difficult instances. Another potential enhancement is the incorporation of connectivity constraints within the subsets associated with the variables of the extended formulation. Considering such constraints could further strengthen the LP relaxation, but this would also have to be taken into account in the pricing problem, which may significantly increase its complexity. A careful study of this trade-off between bound quality and pricing difficulty can therefore be an interesting topic for future work.
 
The theoretical insights developed in this work may indicate a more general pathway for deriving extended formulations from natural bilevel models (see, e.g., \citet{FurLMP2020, FuriniLMP22}). Such models arise frequently in graph partitioning problems, where a leader removes vertices or edges and a follower solves a subproblem that captures a structural property of the modified graph. As discussed in Section~\ref{sec:alternativeForm}, if the follower's problem admits a polyhedral description with a totally unimodular constraint matrix, then dualization and subsequent linearization yield an extended formulation of the original problem with an exponential number of variables which can then be solved by means of a branch-and-price algorithm. For the $k$-VCP, this procedure led to formulation~\zigbase, in other settings the same construction may give rise to new formulations and, in turn, potentially efficient branch-and-price algorithms. Among others, this concerns the $k$-way vertex cut problem~\citep{BGZ14}, whose optimization version aims at maximizing the number of connected components of the residual graph under a budget on the cardinality of the vertex cut. We believe that our branch-and-price algorithm can be easily adapted and used to tackle this problem as well. More generally, the same approach might apply to other critical vertex detection problems that seek to fragment the graph by deleting vertices.

\newpage
\setcounter{table}{0}
\begin{APPENDICES}
\numberwithin{table}{section}

\section{Deferred proofs}
\label{sec:proofs}

\begin{proof}{Proof of Lemma~\ref{lem:treeLP}}
  Let~$\bx \in \B{\vertices}$ and let~$\tilde{\graph} \define \graph(\bx) = (\tilde{\vertices},\tilde{\edges})$ as defined above.
  Let~$\mathscr{M} \subseteq 2^{\tilde{\edges}}$ be the collection of subsets of~$\tilde{\edges}$
  that define an acyclic subgraph of~$\tilde{\graph}$.
  The set~$\mathscr{M}$ is the so-called cycle matroid of~$\tilde{\graph}$,
  see~\cite{KorteVygen2018}, with associated rank function~$r\colon 2^{\tilde{\edges}} \to
  \Z_+$,
  \[
    S \mapsto r(S)
    \define
    \card{S} - \text{number of connected components of~$\tilde{\graph}[S]$}.
  \]
  Due to~\cite{Edmonds1970}, the polytope
  \[
    P = \{ y \in \R_+^{\tilde{\edges}} : y(S) \leq r(S) \text{ for all } S \subseteq \tilde{\edges}\}
  \]
  is integral and its vertices are the incidence vectors of (edge sets of)
  acyclic subgraphs of~$\tilde{\edges}$.
  Moreover, it is easy to see that if~$S \subseteq \tilde{\edges}$ has multiple
  connected components, then the inequality~$y(S) \leq r(S)$ can be
  dominated by the sum of the inequalities for the respective connected
  components.
  Therefore,
  \[
    P
    =
    \{ y \in \R_+^{\tilde{\edges}} : y(S) \leq \card{S} - 1 \text{ for all } S \subseteq \tilde{\edges}\}
    =
    \{ y \in \R_+^{\tilde{\edges}} : y(S) \leq \card{S} - 1 \text{ for all } S \subseteq \tilde{\vertices},\; \card{S} \geq 2\}.
  \]
  In particular, by selecting~$S = \{u,v\}$, we find~$y_{e} \leq 1$ for all~$e \in \tilde{\edges}$.

  The assertion follows by lifting~$P$ from~$\R^{\tilde{\edges}}$ to~$\R^{\edges}$.
  This is achieved by introducing variables~$y_{e}$ for all~$e = \{u,v\} \in \edges\setminus\tilde{\edges}$ and fixing~$y_{e} = 0$ (via the inequalities~$y_{e} \geq 0$, $y_{e} \leq 1 - x_u$, and~$y_e \leq 1 - x_v$).
  \qed
\end{proof}

\begin{proof}{Proof of Theorem~\ref{thm:propertiesSolEF}}
  Let~$(x',\potvar',\beta')$ be an optimal solution to~\eqref{eq:singlelevelNonlin}.
  Then, $x' \in \B{\vertices}$, and we construct an optimal solution~$(\bar{x},\bar{\potvar},\bar{\beta})$ with the claimed properties.

  Let~$\mathscr{K}$ be the set of connected components of the graph arising from~$\graph$ by removing~$\{v \in \vertices : x'_v = 1\}$.
  Since~$x'$ is a solution to the~$k$-vertex cut problem, we have~$\card{\mathscr{K}} \geq k$.
  Consider the solution
  \begin{align*}
    \bar{\bx} &= \bx',
    &
       \bar{\potvar}_S
    &=
      \begin{cases}
        1, & \text{if } S \in \mathscr{K},\\
        0, & \text{otherwise,}
      \end{cases}
    &
       \bar{\beta}_{ew}
    &=
      x'_w.
  \end{align*}
  We claim that this solution is an optimal solution of~\eqref{eq:singlelevelNonlin}.
  Because the objective value is determined by the~$\bx$-variables only and~$\bar{\bx}=\bx'$, optimality follows once we can show that~$(\bar{\bx},\bar{\potvarvec},\bar{\boldsymbol{\beta}})$ is feasible.

  To show that~\eqref{eq:singlelevelNonlin:linking} holds, note that~$(1-\bar{x}_w)\bar{\beta}_{ew} = (1-\bar{x}_w)\bar{x}_w = 0$ because~$\bar{\bx} \in \B{\vertices}$.
  Therefore,
  \[
    \sum_{S \in \potatoes} (\card{S} - 1)\bar{\potvar}_S + \sum_{e \in \edges}\sum_{w \in e}(1 - \bar{x}_w)\bar{\beta}_{ew}
    =
    \sum_{S \in \potatoes} (\card{S} - 1)\bar{\potvar}_S
    =
    \sum_{S \in \mathscr{K}} (\card{S} - 1)
    \leq
    n - \sum_{v \in \vertices} x_v - k ,
  \]
  where that last inequality holds, because~$\mathscr{K}$ are the (at least)~$k$ connected components of the graph obtained from removing~$\{v \in \vertices : x'_v = 1\}$ from~$\graph$.
  That is, \eqref{eq:singlelevelNonlin:linking} holds.

  Regarding~\eqref{eq:singlelevelNonlin:dual}, let~$e = \{u,v\} \in \edges$.
  If~$\bar{\beta}_{eu} + \bar{\beta}_{ev} < 1$, then~$\bar{\beta}_{eu} + \bar{\beta}_{ev} = \bar{x}_u + \bar{x}_v = 0$, i.e., neither~$u$ nor~$v$ is deleted from~$\graph$.
  Consequently, since~$u$ and~$v$ are adjacent, there exists a connected component in~$\mathscr{K}$ that contains both~$u$ and~$v$.
  Hence, \eqref{eq:singlelevelNonlin:dual} is satisfied.

  Since~$(\bar{\bx},\bar{\potvarvec},\bar{\boldsymbol{\beta}})$ clearly satisfies the bound and integrality constraints of~\eqref{eq:singlelevelNonlin}, this solution is feasible for~\eqref{eq:singlelevelNonlin}, which concludes the proof.\qed
\end{proof}

\begin{proof}{Proof of Corollary~\ref{cor:strengtheningZIG}}
  On the one hand, if the~$k$-vertex cut problem for~$\graph$ is infeasible, also~\eqref{eq:singlelevelNonlin} is infeasible.
  Thus, adding further constraints does not change the feasible region.
  On the other hand, when the~$k$-vertex cut problem is feasible, Theorem~\ref{thm:propertiesSolEF} guarantees the existence of an integer optimal solution, where~$\potvar_S = 1$ if and only if~$S$ corresponds to a connected component of the reduced graph.
  When this solution has exactly~$k$ connected components, it is immediate that this solution satisfies all of the constraints proposed in the assertion.
  Otherwise, when the solution has more than~$k$ connected components, observe that the solution remains feasible when setting~$\potvar_{S_1} = \potvar_{S_2} = 0$ and~$\potvar_{S_1 \cup S_2} = 1$ for two distinct connected components~$S_1$ and~$S_2$. That is, we can merge some of the~$\potvarvec$-variables to ensure that exactly~$k$ of them take value~1.
  \qed
\end{proof}

\begin{proof}{Proof of Proposition~\ref{prop:symmetries}}
  Let~$(\bx,\potvarvec) \in \B{\vertices} \times \B{\potatoes}$ and let~$\gamma$ be an automorphism of~$\graph$.
  We need to show that~$(\gamma(\bx), \gamma(\potvarvec))$ is feasible for~\zigbases if and only if~$(\bx,\potvarvec)$ is feasible, and that both solutions have the same objective value.
  The latter is an immediate consequence of~$\gamma$ being an automorphism of~$\graph$, because
  \[
    \sum_{v \in \vertices} c_v \gamma(\bx)_v
    =              
    \sum_{v \in \vertices} c_v x_{\gamma^{-1}(v)}
    =              
    \sum_{v \in \vertices} c_{\gamma(v)} x_v
    =              
    \sum_{v \in \vertices} c_v  x_v.
  \]
  To prove that feasibility is preserved, we discuss the different families of constraints of~\zigbases in turn.

  First, consider~\eqref{eq:zig-bases:cardinality}.
  Since~$\gamma$ permutes the entries of both the~$\bx$-variables and~$\potvarvec$-variables, Constraint~\eqref{eq:zig-bases:cardinality} holds for~$(\bx,\potvarvec)$ if and only if it holds for~$(\gamma(\bx),\gamma(\potvarvec))$.

  Second, regarding~\eqref{eq:zig-bases:vertex_cover}, consider a solution~$(\bx,\potvarvec)$.
  Then, for every~$v \in \vertices$, we have
  \begin{align*}
    \sum_{S \in \potatoes(v)} \potvar_S + x_v
    =
    \sum_{S \in \potatoes(v)} \potvar_{\gamma^{-1}\gamma(S)} + x_{\gamma^{-1}\gamma(v)}
    =
    \sum_{S \in \potatoes(v)} \gamma(\potvarvec)_{\gamma(S)} + \gamma(\bx)_{\gamma(v)}
    =
    \sum_{S \in \potatoes(\gamma(v))} \gamma(\potvarvec)_{S} + \gamma(\bx)_{\gamma(v)}.
  \end{align*}
  That is, $(\bx,\potvarvec)$ satisfies~\eqref{eq:zig-bases:vertex_cover} for vertex~$v$ if and only if~$(\gamma(\bx),\gamma(\potvarvec))$ satisfies~\eqref{eq:zig-bases:vertex_cover} for vertex~$\gamma(v)$.
  As a consequence, $(\bx,\potvarvec)$ adheres to family~\eqref{eq:zig-bases:vertex_cover} if and only if~$(\gamma(\bx),\gamma(\potvarvec))$ does.

  Finally, to show that~$(\bx,\potvarvec)$ satisfies~\eqref{eq:zig-bases:clique_constrs} if and only if~$(\gamma(\bx),\gamma(\potvarvec))$ satisfies these constraints, we can use the same arguments as for~\eqref{eq:zig-bases:vertex_cover}.
  This concludes the proof.  
  \qed
\end{proof}

\section{Solution analysis of the Zachary’s karate club network}
\label{sec:karate}

The {\tt karate} instance represents the classical Zachary’s karate club network \citep{zachary1977information}, a widely used benchmark in community detection and graph partitioning. In this graph, vertices correspond to members of a university karate club, and edges connect pairs of individuals who were observed interacting outside formal activities. A conflict between the instructor (vertex~1) and the club administrator (vertex~34) eventually led to the split of the club into two factions, a structural characteristic that makes this instance particularly suitable for illustrating graph disconnection phenomena.
The optimal solutions of the $k$-VCP on this network exhibit a progressively increasing level of fragmentation as $k$ grows, with the $k$-vertex cut shifting from a single highly central vertex to a larger set of influential ones. For $k=3$, an optimal $3$-vertex cut consists of the single vertex~1 (the instructor), whose removal is sufficient to disconnect the graph into exactly three components. For $k=5$, an optimal $5$-vertex cut has cardinality $2$ and includes vertices~1 and~2. For $k=10$, an optimal $10$-vertex cut has cardinality $ 4$ and includes vertices~1, 3, 33, and~34, whose removal disconnects the network into ten components.
A noticeable property of this network is its low density: many vertices have only two or three neighbors. This explains why the size of optimal vertex cuts remains moderate even for larger values of $k$, and why the corresponding solutions contain several components that are essentially singletons (one singleton for $k=3$ and seven for $k=10$). As expected, the vertices selected in the cuts tend to be those with high degree, which justifies the repeated appearance of vertices~1 and~34 (the instructor and the club administrator) in optimal solutions for larger values of~$k$.

\section{The connectivity cut}
\label{sec:connectivity_cut}

{For both the connectivity cut and the IDH (Appendix~\ref{sec:heur}), we use the following notation. Given a connected graph $\graph[U]$, with $U \subseteq \vertices$, and a vector of non-negative vertex costs $\bc_U=(c_v)_{v \in U}$, we define its \emph{weighted vertex connectivity} as the minimum total cost of a vertex set $W \subseteq U$ whose removal disconnects $\graph[U]$. Equivalently, $W$ is such that $\graph[U \setminus W]$ has more connected components than $\graph[U]$. We denote the weighted vertex connectivity by $\tilde{\kappa}(\graph[U],\bc_U)$ and we set it to $+\infty$ for complete graphs since they cannot be disconnected.
 In particular, if $\graph[U]$ is not complete and $c_v=1$ for all $v \in U$, then $\tilde{\kappa}(\graph[U],\bc_U)$ coincides with the \emph{vertex connectivity} of $\graph[U]$. We extend this definition to graphs that are not necessarily connected by taking the minimum weighted vertex connectivity over their connected components. Thus, if $\graph[U]$ is disconnected, then $\tilde{\kappa}(\graph[U],\bc_U)$ is defined as the minimum value of $\tilde{\kappa}(\graph[Q],\bc_Q)$ over all connected components $\graph[Q]$ of $\graph[U]$, with $Q \subseteq U$.}

{If the number of connected components of $\graph$ is smaller than $k$, as in all non-trivial $k$-VCP instances, then every feasible integer solution of \zigbases satisfies
\begin{equation}
  \label{eq:connectivity_cut}
  \sum_{v \in \vertices} c_v x_v \geq \tilde{\kappa}(\graph,\bc_{\vertices}),
\end{equation}
and inequality~\eqref{eq:connectivity_cut} can be added to \zigbases to strengthen its LP relaxation. We refer to~\eqref{eq:connectivity_cut} as the \emph{connectivity cut}.
Intuitively, this inequality states that, whenever the current graph has fewer than $k$ connected components, any feasible solution must remove vertices whose total cost is at least the minimum cost required to disconnect one of its connected components.}

{For a given $U \subseteq \vertices$ such that $\graph[U]$ is a non-complete and connected graph, the value $\tilde{\kappa}(\graph[U],\bc_U)$  can be computed in polynomial time as follows. We consider all pairs of nonadjacent vertices $u,v \in U$ and solve a minimum $u$-$v$ cut problem on an auxiliary directed network. The value $\tilde{\kappa}(\graph[U],\bc_U)$ is then equal to the minimum $u$-$v$ cut value obtained over all these pairs. In the auxiliary directed network, vertices $u$ and $v$ are the source and the sink, respectively, whereas each other vertex $w$ is split into two nodes $w_1$ (the ``in'' copy) and $w_2$ (the ``out'' copy), connected by an arc $(w_1,w_2)$ with capacity $c_w$. For each edge $\{p,q\}$ of the original graph, we add infinite-capacity arcs from the ``out'' copy of one endpoint to the ``in'' copy of the other endpoint. In particular, if both endpoints are split, we add arcs~$(p_2,q_1)$ and~$(q_2,p_1)$. Otherwise, if one endpoint, say $p$, coincides with $u$ or $v$, then the added arcs are~$(p,q_1)$ and~$(q_2,p)$, since $u$ and $v$ are not split. Any finite-capacity minimum $u$-$v$ cut in this auxiliary network therefore cuts only arcs of the form~$(w_1,w_2)$. This construction follows the standard way of computing the vertex connectivity of a graph, with the only difference that, in our case, we set the capacities of the arcs~$(w_1,w_2)$ to the weights of the corresponding vertices~$w$, instead of $1$. As an illustration, consider the example graph $\graph[U]$ displayed in Figure~\ref{fig:aux_network_connectivity_cut:graph}, where $U=\{u, p, q, v\}$ and $\edges[U] = \{\{u, p\},\{p, q\}, \{q, v\}\}$. To illustrate the construction of the auxiliary network, we consider the pair of nonadjacent vertices $\{u,v\}$.Hence, vertex~$p$ is split into $p_1$ and $p_2$, connected by an arc of capacity~$c_p$. Similarly, vertex~$q$ is split into $q_1$ and $q_2$, connected by an arc of capacity~$c_q$. The source node $u$ and the sink node $v$ are linked with infinite-capacity arcs to their neighbors (see Figure~\ref{fig:aux_network_connectivity_cut:network}). Assuming $c_p < c_q$, a minimum~$u$-$v$ cut is crossed by arc~$(p_1,p_2)$ and has value~$c_p$.}

\begin{figure}[htbp]
  \centering
  \begin{subfigure}[c]{0.35\textwidth}
    \centering
    \begin{tikzpicture}[baseline=(current bounding box.center)]
      \tikzstyle{v} += [circle,draw=black,thick,inner sep=2pt,minimum size=2mm];
      \node (u)  at (0,0)   [v,label=above:{\scriptsize $u$}] {};
      \node (p)  at (1.2,0) [v,label=above:{\scriptsize $p$},
                               label=below:{\scriptsize $c_p$}] {};
      \node (q)  at (2.4,0) [v,label=above:{\scriptsize $q$},
                               label=below:{\scriptsize $c_q$}] {};
      \node (vv) at (3.6,0) [v,label=above:{\scriptsize $v$}] {};
      \draw[-,thick] (u) -- (p) -- (q) -- (vv);
    \end{tikzpicture}
    \subcaption{ }
    \label{fig:aux_network_connectivity_cut:graph}
  \end{subfigure}
  \qquad
  \begin{subfigure}[c]{0.5\textwidth}
    \centering
    \begin{tikzpicture}
      \tikzstyle{v} += [circle,draw=black,thick,inner sep=2pt,minimum size=2mm];
      \node (u)  at (0,0)   [v,fill=black,label=below:{\scriptsize $u$}] {};
      \node (p1) at (2,1) [v,fill=black,label=above:{\scriptsize $p_1$}] {};
      \node (p2) at (2,-1) [v,label=below:{\scriptsize $p_2$}] {};
      \node (q1) at (4,1) [v,label=above:{\scriptsize $q_1$}] {};
      \node (q2) at (4,-1) [v,label=below:{\scriptsize $q_2$}] {};
      \node (vv) at (6,0) [v,label=below:{\scriptsize $v$}] {};
      \draw[->,thick]        (u)  to (p1);
      \draw[->,thick]        (p2)  to (u);
      \draw[->,thick,dashed] (p1) to node[left]{\scriptsize $c_p$}   (p2);
      \draw[->,thick]        (q1) to node[right]{\scriptsize $c_q$}   (q2);
      \draw[->,thick]        (p2) to   (q1);
      \draw[->,thick]        (q2) to   (p1);      
      \draw[->,thick]        (q2) to (vv);
      \draw[->,thick]        (vv) to (q1);
    \end{tikzpicture}
    \subcaption{ }
    \label{fig:aux_network_connectivity_cut:network}
  \end{subfigure}

  \smallskip
  
  \caption{{Auxiliary network construction for a graph $\graph[U]$, where $U=\{u, p, q, v\}$ and~$\edges[U] = \{\{u, p\},\{p, q\}, \{q, v\}\}$. In the right subfigure, the dashed arc $(p_1,p_2)$ belongs to a minimum $u$-$v$ cut of value $c_p$. Black-filled nodes belong to the source side of the cut, while white nodes belong to the sink side.  Arcs without labels have infinite capacity.}
  }
  \label{fig:aux_network_connectivity_cut}
\end{figure}

{Preliminary computational experiments indicate that the effectiveness of the connectivity cut depends on the value of $k$.
For moderate values (up to $k = 15$), the cut typically strengthens the LP relaxation and improves the overall performance, while incurring only a limited computational overhead.
For larger values of $k$, the cut may instead become less beneficial and interfere with the internal branching mechanisms of the solver.
For this reason, in our computational study we activate the connectivity cut only for $k\le 15$.}

\section{{The Iterative Disconnection Heuristic}}
\label{sec:heur}
{
Our heuristic, denoted by the Iterative Disconnection Heuristic (IDH), aims to 
find a set~$\vertices_0 \subseteq \vertices$ such that 
$\graph[\vertices \setminus \vertices_0]$ has at least~$k$ connected components, i.e., a heuristic solution to the $k$-VCP.
Starting with an empty set $\vertices_0 \gets \emptyset$, IDH repeatedly considers the 
induced subgraph $\graph[\vertices \setminus \vertices_0]$. As long as this 
subgraph has fewer than~$k$ connected components, the heuristic proceeds as 
follows. For each non-complete connected component $\graph[H]$ of 
$\graph[\vertices \setminus \vertices_0]$, with 
$H \subseteq \vertices \setminus \vertices_0$, let 
$\bc_H = (c_v)_{v \in H}$ and compute 
$\tilde{\kappa}(\graph[H],\bc_H)$, together with a minimum-cost vertex set 
$W_H \subseteq H$ whose removal disconnects $\graph[H]$ (see Appendix \ref{sec:connectivity_cut} for the definition of the notation).  IDH then selects a 
component $H^\star$ attaining the smallest value of 
$\tilde{\kappa}(\graph[H],\bc_H)$ and updates~$\vertices_0 \gets \vertices_0 \cup W_{H^\star}$. If, after this first 
iteration, $\graph[\vertices \setminus \vertices_0]$ already has at least~$k$ 
connected components, then the solution found is optimal for the $k$-VCP: indeed, 
any feasible solution must remove vertices of total cost at least 
$\tilde{\kappa}(\graph,\bc)$, and IDH has removed a vertex set attaining this 
bound. The heuristic terminates once~$\graph[\vertices \setminus \vertices_0]$ has at least~$k$ connected components. 
If, before reaching this condition, all connected components of~$\graph[\vertices \setminus \vertices_0]$ are complete graphs, IDH fails, 
since no vertex removal can disconnect any of the remaining components. In this case, the heuristic does not find a solution, even if one may exist.
}

\section{Benchmark instances}
\label{sec:compInstances}
For our numerical experiments, we use the same instances as~\citet{FurLMP2020}.
Their test set consists of~118 graphs, where~50 originate from the 2nd DIMACS challenge 
(called the {\tt Coloring} family), 9 come from the 10th DIMACS challenge on graph 
partitioning and clustering (called the {\tt Partitioning} family), and~59 intersection graphs from 
\citet{BS05b} (called the {\tt Intersection} family) which have been proposed
for the vertex separator problem.
Following~\citet{FurLMP2020}, we consider the~$k$-VCP for~$k \in \{5,10,15,20\}$ for these graphs.

Across these three families of graphs, the number of vertices ranges from 
11 to 297 (with an average of about~101), and the number of edges ranges 
from 20 to 7763 (with an average of about 1426).
On average, instances in the {\tt Coloring} and {\tt Intersection} families 
tend to have substantially more edges (typically between one and several thousand), 
whereas the {\tt Partitioning} family contains graphs with noticeably larger vertex sets 
(on average close to 150 vertices).
Tables~\ref{tab:unweighted_best_known_coloring_partitioning}--\ref{tab:weighted_best_known_intersection} provide for each graph in our test set the exact number of vertices and edges as well as the stability number.
Recall that the stability number~$\alpha(\graph$) of a graph~$\graph$ is the maximum size of a stable set in~$\graph$.

Due to Lemma~\ref{lem:stableset}, an instance of the~$k$-VCP for graph~$\graph$ is feasible if and only if~$\alpha(\graph) \geq k$.
As~\citet{FurLMP2020}, we therefore consider, for fixed~$k \in \{5,10,15,20\}$, only graphs~$\graph$ with~$\alpha(\graph) \geq k$.
For~$k=5$, all graphs in our test set have a stability number of at least~5, i.e., no graph is discarded.
Following~\citet{FurLMP2020}, we further simplify an instance~$(\graph,k)$ of the~$k$-VCP by
(i) identifying vertices that must belong to any feasible $k$-vertex cut for the given~$k$; such vertices must be fixed to the vertex cut and removed from the graph; and  
(ii) instances whose reduced graphs already contain at least~$k$ connected components are eliminated because they are trivial.
For details on this process, we refer to \citet{FurLMP2020}, which also reports the number of vertices and edges of the graphs after preprocessing.  
This reduction is effective for a substantial subset of instances: depending on the value of~$k$,
between 11 and 20 graphs undergo vertex deletions, and while many reductions remove only a few vertices,
in some cases more than one hundred vertices are eliminated, leading in several instances to a substantial shrinkage
of the input graph.

After the filtering and reduction procedure, a total of 304 feasible and non-trivial $k$-VCP instances are obtained.
Table~\ref{tab:instances-final} reports the number of instances for each family of graphs and each
value of~$k$.
Based on these graphs we define two test sets.
The \emph{unweighted} test set consists of all~304 graphs with unit vertex costs.
The \emph{weighted} test set uses the same graphs, but assign each vertex~$v \in \vertices$ a cost~$c_v$ drawn uniformly from $\{1,2,\dots,10\}$.

The preprocessing and filtering of instances as  described above is implemented in \texttt{Gurobi} 11.0.3 and is applied off-line to all instances. 

\begin{table}[h!]
  \centering
  \small
  \renewcommand\arraystretch{1.3}
  \tabcolsep=12pt
  \caption{Number of $k$-VCP instances used in the experiments, grouped by family of graphs and value of $k$.}
  \label{tab:instances-final}
  \begin{tabular}{lrrrrr}
    \toprule
    {Family} & \textbf{$k=5$} & \textbf{$k=10$} & \textbf{$k=15$} & \textbf{$k=20$} & {Total} \\
    \midrule
    {\tt Coloring}      & 42 & 32 & 29 & 28 & 131 \\
    {\tt Partitioning}  & 9  & 9  & 9  & 8  & 35  \\
    {\tt Intersection}  & 56 & 39 & 27 & 16 & 138 \\
    \midrule
    {Total} & {107} & {80} & {65} & {52} & {304} \\
    \bottomrule
  \end{tabular}
\end{table}

\section{Separation of the clique-partitioning constraints}
\label{sec:sep}

The clique-partitioning constraints~\eqref{eq:zig-base:clique_constrs} of \oldBP and \zigbase guarantee that the subsets of vertices selected by the variables~$\potvarvec$ are pairwise disjoint on the induced graph~$\graph[\vertices\setminus\vertices_0]$.
Since the family of clique-partitioning constraints can be exponentially large, it is practically infeasible to explicitly add all of them to~\eqref{eq:zig-base} though.
As observed by \citet{CFLMMM18}, to obtain a valid formulation, it is sufficient to consider a family of clique-partitioning inequalities associated with a clique cover of the edge set. This is the approach adopted in formulation \oldBP, where a collection of maximal cliques is used to partition all edges.
The quality of the LP relaxation, however, strongly depends on the family of clique-partitioning constraints that is selected, cf.\ Section~\ref{sec:lp_comparison}.

A natural question is to understand which dual bound can be obtained when we explicitly separate violated clique-partitioning inequalities, instead of restricting ourselves to those associated with a fixed family of cliques.
To address this question, we introduce and analyze the separation problem for the clique-partitioning inequalities.
Given a vector~$\potvarvec \in \R^{\potatoes}$, the separation problem of clique-partitioning inequalities is to decide whether there exists a clique~$C \in \cliques$ such that~$\sum_{S \in \potatoes(C)} \potvar_S > 1$.
In the following, we prove that the separation problem is $\mathcal{NP}$-hard for solutions of a restricted master problem.
\begin{proposition}
  \label{prop:sep-np-hard}
  The separation problem for clique-partitioning inequalities in $\mathcal{NP}$-hard.
\end{proposition}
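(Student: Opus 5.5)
We reduce from the maximum-weight clique problem, which is $\mathcal{NP}$-hard even with unit weights (the decision version of \textsc{Clique}: given $\graph$ and an integer $q$, does $\graph$ contain a clique of size at least $q$?). Given such an instance, we build a vector $\potvarvec \in \R^{\potatoes}$ with polynomially many nonzero entries, so that it could arise as a solution of a restricted master problem. We then show that a violated clique-partitioning inequality exists if and only if $\graph$ has a clique of size at least $q$.

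\textbf{Construction.} The natural choice is to support $\potvarvec$ only on singletons: set $\potvar_{\{v\}} = 1/(q-1)$ for every $v \in \vertices$ (assume $q \ge 2$) and $\potvar_S = 0$ otherwise. For any clique $C$, the family $\potatoes(C)$ restricted to the support of $\potvarvec$ consists exactly of the singletons $\{v\}$ with $v \in C$. Hence
\[
\sum_{S \in \potatoes(C)} \potvar_S = \frac{\card{C}}{q-1},
\]
which exceeds $1$ if and only if $\card{C} \ge q$. So deciding whether some clique yields a violated inequality is exactly deciding whether $\omega(\graph) \ge q$, and the reduction is polynomial.

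\textbf{Making the vector look like an RMP solution.} To make the statement meaningful for restricted-master-problem solutions, we check that this $\potvarvec$ is compatible with the structure of \zigbases. The support consists only of the initial columns $\potvar_{\{v\}}$ from Section~\ref{sec:initRMP}. With $x_v = 1 - 1/(q-1)$, Constraints~\eqref{eq:zig-bases:vertex_cover} hold with equality. Constraint~\eqref{eq:zig-bases:cardinality} requires $n/(q-1) \ge k$, so we choose $k \le n/(q-1)$. For the clique constraints over the initial family $\cliques'$, we may take $\cliques' = \edges$ (valid by Proposition~\ref{prop:zigbases}); each edge constraint then reads $2/(q-1) \le 1$, which holds for $q \ge 3$.

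\textbf{Main obstacle.} The delicate step is arguing that $\potvarvec$ is actually an (optimal) RMP solution rather than merely feasible. If this is required, I would first try to choose costs $\bc$ so that this point is an optimal vertex of the RMP. Failing that, I would fall back on the weaker reading that the separation problem takes arbitrary vectors supported on the current columns. The core hardness comes from the clique reduction itself, which also works with weighted singleton values $\potvar_{\{v\}} = w_v$ to encode maximum-weight clique directly.
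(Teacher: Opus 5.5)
Your proposal is correct and follows essentially the same route as the paper: the paper also reduces from the clique decision problem by placing the value $\frac{1}{\ell-1}$ on every singleton column (and zero elsewhere), so that a clique $C$ yields a violated inequality exactly when $\card{C} \geq \ell$. Your extra check that this point is feasible for the restricted master problem (singleton columns, edge constraints, $q \geq 3$, suitable $k$) goes beyond what the paper does. The paper simply takes the singleton family as the current column set and never argues optimality, so the ``main obstacle'' you flag does not need to be resolved for the stated result.
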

\begin{proof}{Proof}
We prove the claim by a reduction from the $\mathcal{NP}$-hard Maximum Clique Problem (MCP).
That is, given an undirected graph~$\graph = (\vertices,\edges)$ and a positive integer~$\ell \geq 2$, one needs to decide whether there exists a clique of size at least~$\ell$ in~$\graph$.
We construct an instance of the separation problem as follows.

Let $\potatoes$ be the family of all singleton subsets of $\vertices$, i.e., $\potatoes = \{\{v\} : v \in \vertices \}$.
For each $S = \{v\} \in \potatoes$, set $\potvar_S = \frac{1}{\ell - 1}$.
On the one hand, if there exists a clique~$C$ of size at least~$\ell$ in~$\graph$, then~$\sum_{S \in \potatoes(C)} \potvar_S = \sum_{v \in C} \potvar_{\{v\}}\geq \frac{\ell}{\ell - 1} > 1$, i.e., there exists a violated clique-covering inequality. 
On the other hand, if there exists a violated clique-covering inequality, there exists a clique~$C$ in~$\graph$ with~$\sum_{v \in C} \potvar_{\{v\}} = \sum_{S \in \potatoes(C)} \potvar_S > 1$.
Since all~$\potvarvec$-variables have the same value~$\frac{1}{\ell - 1}$, this means that the left-hand side of this inequality evaluates to~$\frac{\card{C}}{\ell - 1} > 1$.
That is, $C$ has size at least~$\ell$, which concludes the proof.
\qed
\end{proof}
Due to the separation problem being $\mathcal{NP}$-hard, we cannot expect to find a polynomial time algorithm for separating clique-covering inequalities.
We therefore suggest to solve the separation problem by means of integer programming.
To this end, observe that there is a clique-covering inequality that is violated by a non-negative vector~$\potvarvec$ if and only if
\[
\max_{C \in \cliques} \sum_{S \in \potatoes(C)} \potvar_C > 1.
\]
We can find a maximizing clique~$C$ by solving an integer program.
 Let us define a variable $\beta_v \in \B{}$, for all $v \in \vertices$, taking value $1$ if vertex $v$ is in $C$, and 0 otherwise; we also define a variable $\gamma_S \in \B{}$ for all $S \in \potatoes$, equal to $1$ if clique $C$ intersects $S$, and $0$ otherwise. An ILP formulation for the separation problem reads as follows:

\begin{subequations}
    \label{eq:separation_ilp}
        \begin{align}
          \text{(SEP)} & & \max_{{\substack{\boldsymbol{\beta} \in \{0,1\}^{\vertices}\\ \boldsymbol{\gamma} \in \{0,1\}^{\potatoes}}}} \, \, \sum_{S \in \potatoes} \potvar_S \, \gamma_S  \label{separation_ilp:obj}&&&\\[1 ex]
          & &  \beta_u + \beta_v & \le 1,  && \{u, v\} \in \overline{\edges},\label{separation_ilp:clique_constrs}\\[1 ex]
          & &  \gamma_S &\le \sum_{v \in S} \beta_v, && S \in \potatoes. \label{separation_ilp:intersection_constrs}
        \end{align}
\end{subequations}
where $\overline{\edges} = \{\{u, v\}  \, : \, u, v \in \vertices \text{ and } \{u, v\} \notin \edges \}$ is the set of non-edges of $\graph$. The objective function \eqref{separation_ilp:obj} maximizes the total weight of the subsets in $\potatoes$ that are intersected by the selected clique, where the weight of each subset $S$ is given by the value $\potvar_S$ of the corresponding variable in the current fractional solution. Constraints \eqref{separation_ilp:clique_constrs} ensure that, for each non-edge $\{u, v\} \in \overline{\edges}$, at most one of its endpoints can be part of the selected clique, thus guaranteeing that any subset of vertices $C = \{v \in V \, : \, \beta_v = 1\}$ is a clique. Finally, Constraints \eqref{separation_ilp:intersection_constrs} enforce that, for each subset of vertices $S \in \potatoes$, its associated variable $\gamma_S$ can take value $1$ only if at least one of the vertices in $S$ is part of the solution clique~$C$. We remark that, since $\potvar_S \ge 0$ for all $S \in \potatoes$, any solution $(\boldsymbol{\beta}, \boldsymbol{\gamma})$ to \eqref{eq:separation_ilp} with $\gamma_S < \min \{1, \, \sum_{v \in S} \beta_v \}$ for some $S \in \mathcal{S}$ cannot be optimal. This is because setting $\gamma_S = \min \{1, \, \sum_{v \in S} \beta_v \}$ we obtain a solution with a greater objective function value, without violating any constraint.
It follows that the integrality of the $\boldsymbol{\beta}$ variables suffices to guarantee that the $\boldsymbol \gamma$ variables are integral.
As a consequence, the integrality of the latter set of variables can be relaxed.

\medskip 
We now turn to a computational assessment of the effectiveness of the exact separation procedure described above. Given the $\mathcal{NP}$-hardness result of Proposition~\ref{prop:sep-np-hard}, we restrict the use of Model~\eqref{eq:separation_ilp} to the root node of the branch-and-bound tree.
In this way, we can evaluate the impact of optimizing over the closure of the clique-partitioning constraints at the root node, without incurring an excessive computational overhead.
In our experiments, we compare two settings on the entire set of unweighted instances.
In the first one, denoted by \BPs, we do not perform any separation of clique-partitioning inequalities.
Instead, we generate at the beginning a family of maximal cliques that covers all edges of the graph, and we add the corresponding clique-partitioning constraints to the formulation.
In the second setting, denoted by \BPCs, we start from the same initial family of maximal cliques and then apply the exact separation procedure at the root node, by solving~\eqref{eq:separation_ilp} iteratively until no violated clique-partitioning inequality is found.
The initial family of maximal cliques used in both settings is constructed by means of the greedy edge-cover procedure detailed in Section~\ref{sec:initRMP}.
The resulting family of maximal cliques provides an edge cover of $\edges$ and is used to define the initial set of clique-partitioning constraints in both \BPs\ and \BPCs.

Table~\ref{tab:root-sep} reports a comparison between the \BPCs\ and \BPs\ settings at the root node. The table includes only the instances for which \BPCs\ generates at least one violated clique-partitioning inequality at the root node. Over these~${47}$ instances, \BPCs\ generates on average $8.5$ clique-partitioning cuts at the root node, with a maximum of $44$ cuts on a single instance. Rows are grouped by the value of parameter $k$. For each value of $k$, {column ``\# inst'' reports the number of instances for which \BPCs\ generates at least one violated clique-partitioning inequality at the root node, over the total number of instances with that value of $k$.}
The first part of Table~\ref{tab:root-sep} (columns \emph{integrality gap}) reports the integrality gap, defined as the percentage difference between the best known integer solution value and the optimal value of the LP relaxation at the root node for a given setting, taken with respect to the best known integer solution value, i.e, $100 \cdot \frac{z^{\text{BEST}} - z^{\text{LP}}}{z^{\text{BEST}}}$,
where $z^{\text{BEST}}$ is the best known integer solution objective value and $z^{\text{LP}}$ is the optimal value of the root-node LP relaxation under the considered setting.
For each $k$, we then report the average and maximum gap over the corresponding instances.
Columns ``LP time'' report instead the average and maximum CPU time (in seconds) to solve the root-node LP relaxation in each setting.

\begin{table}
\centering
\small
\renewcommand
\arraystretch{1.5}
\tabcolsep=4.0pt
\caption{Evaluation of the effect of separating clique-partitionig inequalities at the root node. {The last row reports the total number of instances for the ``\# inst'' column and average values for the remaining columns.}}
\label{tab:root-sep}
\begin{tabular}{l d{3.0} rrrrrrrrrrrrrr}
\hline
                &         &  &  & \multicolumn{5}{c}{\BPCs}                                             &  &  & \multicolumn{5}{c}{\BPs}                                             \\ \cline{5-9} \cline{12-16} 
                &         &  &  & \multicolumn{2}{r}{LP time} &  & \multicolumn{2}{r}{integrality gap} &  &  & \multicolumn{2}{r}{LP time} &  & \multicolumn{2}{r}{integrality gap} \\ \cline{5-6} \cline{8-9} \cline{12-13} \cline{15-16} 
$k$               & \multicolumn{1}{c}{\phantom{aaa,}\# inst} &  &  & avg            & time       &  & avg                & max            &  &  & avg            & max        &  & avg                & max            \\ \hline
                &         &  &  &                &            &  &                    &                &  &  &                &            &  &                    &                \\[-1.5ex]
5               & 22/107  &  &  & 9.7            & 35.3       &  & 30.7               & 63.5           &  &  & 0.8            & 4.6        &  & 32.6               & 63.6           \\
10              & 14/80   &  &  & 18.5           & 90.3       &  & 17.7               & 59.0           &  &  & 1.2            & 5.1        &  & 19.7               & 59.0           \\
15              & 7/65    &  &  & 30.9           & 113.6      &  & 22.7               & 55.3           &  &  & 0.8            & 1.2        &  & 24.0               & 55.4           \\
20              & 4/52    &  &  & 16.9           & 31.2       &  & 18.9               & 49.5           &  &  & 0.7            & 0.8        &  & 20.6               & 49.5           \\[1.5ex] \hline
{Overall} & 47/304  &  &  & {16.1}  &   &  & {24.6}      &       &  &  & {0.9}   &   &  & {26.4}      &       \\ \hline
\end{tabular}
\end{table}

We observe that the use of exact separation at the root node significantly increases the LP time.
For example, for~$k = 5$ the average root-node time grows from $0.8$ seconds in \BPs\ to $9.7$ seconds in \BPCs\, and for $k = 10$ it grows from $1.2$ to $18.5$ seconds.
On average over all values of $k$, the root-node time increases from $0.9$ seconds for \BPs\ to~$16.1$ seconds for \BPCs.
Similar trends can be seen for the maximum times, where \BPCs\ frequently requires one to two orders of magnitude more CPU time than \BPs\ to process the root node.
In contrast, the improvement in the root-node integrality gap obtained by \BPCs\ is limited.
For $k = 5$, the average gap decreases only from $32.6\%$ to~$30.7\%$, and for $k = 10$ from $19.7\%$ to $17.7\%$.
For larger values of $k$, the average gaps of the two settings are very close.
Overall, the average gap over all instances with cuts decreases from $26.4\%$ in \BPs\ to $24.6\%$ in \BPCs, which corresponds to a rather modest improvement compared with the increase in root-node time.
We also evaluated the impact of exact separation at the root node on the overall performance of the algorithm.
Running the full algorithm with \BPCs\ (separating clique-partitioning inequalities only at the root node) does not lead to any noticeable advantage compared with \BPs\ in terms of total solution time or number of explored nodes.
On the contrary, when looking at the optimality gaps at the time limit on the subset of instances that remain unsolved, the behavior of \BPCs\ is slightly worse.
On the $28$ out of $47$ instances with root-node cuts that are not solved to proven optimality within the time limit, the average optimality gap --computed as $100 \cdot ( \text{UB} - \text{LB} ) / \text{UB}$ at the time limit-- is $29.08\%$ for \BPCs\ and $28.88\%$ for \BPs.
These results indicate that, for the instances considered, the exact separation of clique-partitioning inequalities at the root node is not an effective strategy.
The reduction in the root-node integrality gap is marginal and does not translate into better global performance, while the additional cost in terms of LP time is significant.

\section{Supplementary detailed computational results }
\label{sec:suppTables}

Figure~\ref{fig:perf-all} reports performance profiles for COMP, HYB, and BP$^\star$ on the unweighted (left column) and weighted (right column) instances, for each value of $k$.
Overall, the plots confirm the already observed good behavior of BP$^\star$, but they also highlight some regimes where COMP or HYB can be competitive.
For unweighted instances, BP$^\star$ is generally the most robust algorithm: its profile is typically above that of HYB, which in turn dominates COMP for most values of the performance ratio~$\tau$ and for most $k$. In particular, for $k \in \{10,15,20\}$ the curve of BP$^\star$ remains clearly above that of COMP over the whole range of~$\tau$, while the gap with HYB depends on~$k$ and tends to decrease for larger values of~$k$.
The performance difference is less pronounced on weighted instances.
For $k=5$, COMP has the best profile, especially for large values of~$\tau$, while BP$^\star$ dominates HYB over the entire range.
For $k=10$, BP$^\star$ keeps dominating HYB for all values of $\tau$, while for $k=15$ and $k=20$ the two algorithms have very close profiles, although the curve of BP$^\star$ eventually overtakes that of HYB. The performance of COMP deteriorates as the value of $k$ increases, and its profile stays well below both HYB and BP$^\star$ for all $k \in \{10, 15, 20\}$.
Taken together, these profiles indicate that BP$^\star$ is overall the most effective algorithm, particularly on unweighted instances and moderate values of $k$, while COMP can still be preferable on the easiest instances with small $k$ (particularly if weighted), and HYB remains competitive on all the instances, but mostly on weighted instances with higher values of $k$.

\begin{figure}[!h]
    \centering

    \begin{subfigure}[t]{0.48\textwidth}
        \centering
        \begin{tikzpicture}
 \begin{axis}[const plot,
  cycle list={
  {blue, dashed},
  {green!80!black, dashdotted},
  {red,solid},
  {pink, solid}},
                xmode = log,
                log basis x = 10,
                xmin = 1, xmax = 1000,
                xtick = {10, 100, 1000},
                xticklabels = {$10$, $10^2$, $10^3$},
                ymin=0.00, ymax=0.85,
                ymajorgrids,
                ytick={0,0.2,0.4,0.6,0.8,1.0},
                yticklabels={0,20,40,60,80,100},
                xlabel={ $\tau$ ($k=5$)},
                ylabel={ \% of instances},
                legend pos={south east},
                width=\textwidth,
                height=5cm,
                tick align=outside,
                tick style={semithick},
                grid style={dashed,gray!70}
            ]
  \addplot+[mark=none, line width=1.2pt] coordinates {
    (1.0000,0.1589)
    (1.0524,0.1682)
    (1.2765,0.1682)
    (1.2881,0.1776)
    (1.3085,0.1776)
    (1.3665,0.1869)
    (1.5183,0.1869)
    (1.5284,0.1963)
    (1.5435,0.2056)
    (1.5477,0.2150)
    (1.6603,0.2150)
    (1.7338,0.2243)
    (1.9883,0.2243)
    (2.0863,0.2336)
    (2.1483,0.2430)
    (2.3568,0.2523)
    (2.4205,0.2617)
    (2.4927,0.2710)
    (2.4942,0.2710)
    (2.5488,0.2804)
    (2.6697,0.2897)
    (2.8834,0.2991)
    (3.0876,0.2991)
    (3.4169,0.3084)
    (3.5554,0.3084)
    (3.7540,0.3178)
    (3.9031,0.3271)
    (4.2503,0.3364)
    (4.2579,0.3458)
    (4.3476,0.3551)
    (4.5962,0.3551)
    (4.9172,0.3645)
    (4.9725,0.3738)
    (5.1881,0.3832)
    (5.2666,0.3925)
    (5.5258,0.3925)
    (5.5383,0.4019)
    (5.5596,0.4019)
    (5.9979,0.4112)
    (6.5570,0.4206)
    (7.1200,0.4206)
    (7.2202,0.4299)
    (7.8524,0.4299)
    (8.2825,0.4393)
    (9.0268,0.4486)
    (9.3476,0.4486)
    (9.8226,0.4579)
    (11.2098,0.4579)
    (11.4140,0.4673)
    (11.7978,0.4766)
    (12.0010,0.4860)
    (13.6336,0.4953)
    (14.1364,0.5047)
    (16.5539,0.5140)
    (16.8817,0.5140)
    (17.6049,0.5234)
    (17.7529,0.5327)
    (20.5411,0.5421)
    (22.7966,0.5421)
    (23.8326,0.5514)
    (26.0474,0.5514)
    (29.8645,0.5607)
    (34.1242,0.5607)
    (35.5712,0.5701)
    (39.7143,0.5701)
    (42.4466,0.5794)
    (43.6629,0.5888)
    (48.7887,0.5981)
    (52.3329,0.5981)
    (53.3261,0.6075)
    (55.2531,0.6075)
    (70.7793,0.6168)
    (73.7658,0.6168)
    (83.7091,0.6262)
    (84.9671,0.6262)
    (85.3663,0.6355)
    (102.0609,0.6449)
    (105.9213,0.6542)
    (108.0463,0.6636)
    (111.3129,0.6729)
    (120.2771,0.6729)
    (141.1637,0.6822)
    (147.8013,0.6916)
    (167.4308,0.7009)
    (172.4598,0.7009)
    (174.9621,0.7103)
    (176.2219,0.7103)
    (177.8958,0.7196)
    (185.3844,0.7290)
    (202.9538,0.7383)
    (250.1390,0.7477)
    (332.6204,0.7570)
    (395.9383,0.7570)
    (432.2144,0.7664)
    (563.6042,0.7664)
    (615.7330,0.7757)
    (624.1752,0.7757)
    (790.5379,0.7850)
    (810.6394,0.7944)
    (812.5865,0.8037)
    (833.9576,0.8037)
    (3355.3001,0.8131)
    (4747.1468,0.8224)
    (9216.8265,0.8224)
  };
  \addlegendentry{COMP}
  \addplot+[mark=none, line width=1.2pt] coordinates {
    (1.0000,0.3084)
    (1.0524,0.3084)
    (1.1211,0.3178)
    (1.2881,0.3178)
    (1.3085,0.3271)
    (1.3665,0.3271)
    (1.3894,0.3364)
    (1.3979,0.3364)
    (1.4653,0.3458)
    (1.4956,0.3458)
    (1.5074,0.3551)
    (1.7338,0.3551)
    (1.7485,0.3645)
    (1.7491,0.3645)
    (1.8810,0.3738)
    (1.9360,0.3738)
    (1.9549,0.3832)
    (1.9883,0.3925)
    (2.4927,0.3925)
    (2.4942,0.4019)
    (2.9269,0.4019)
    (2.9525,0.4112)
    (2.9770,0.4206)
    (2.9792,0.4206)
    (3.0118,0.4299)
    (3.0876,0.4393)
    (4.3476,0.4393)
    (4.5962,0.4486)
    (5.2666,0.4486)
    (5.2954,0.4579)
    (6.5570,0.4579)
    (7.1200,0.4673)
    (7.3901,0.4673)
    (7.8524,0.4766)
    (9.2139,0.4766)
    (9.3476,0.4860)
    (9.8226,0.4860)
    (10.7285,0.4953)
    (10.9565,0.4953)
    (11.2098,0.5047)
    (29.8645,0.5047)
    (32.0356,0.5140)
    (48.7887,0.5140)
    (50.3812,0.5234)
    (52.3329,0.5327)
    (53.3261,0.5327)
    (55.2531,0.5421)
    (70.7793,0.5421)
    (73.7658,0.5514)
    (83.7091,0.5514)
    (84.9671,0.5607)
    (167.4308,0.5607)
    (172.4598,0.5701)
    (174.9621,0.5701)
    (176.2219,0.5794)
    (475.9526,0.5794)
    (484.7676,0.5888)
    (563.6042,0.5981)
    (615.7330,0.5981)
    (624.1752,0.6075)
    (812.5865,0.6075)
    (833.9576,0.6168)
    (4747.1468,0.6168)
    (5160.8910,0.6262)
    (9216.8265,0.6262)
  };
  \addlegendentry{HYB}
  \addplot+[mark=none, line width=1.2pt] coordinates {
    (1.0000,0.5327)
    (1.1211,0.5327)
    (1.1267,0.5421)
    (1.1269,0.5514)
    (1.2765,0.5607)
    (1.3894,0.5607)
    (1.3979,0.5701)
    (1.4653,0.5701)
    (1.4956,0.5794)
    (1.5074,0.5794)
    (1.5183,0.5888)
    (1.5477,0.5888)
    (1.6603,0.5981)
    (1.7485,0.5981)
    (1.7491,0.6075)
    (1.8810,0.6075)
    (1.9360,0.6168)
    (2.8834,0.6168)
    (2.9269,0.6262)
    (2.9770,0.6262)
    (2.9792,0.6355)
    (3.4169,0.6355)
    (3.5554,0.6449)
    (5.2954,0.6449)
    (5.5258,0.6542)
    (5.5383,0.6542)
    (5.5596,0.6636)
    (7.2202,0.6636)
    (7.3901,0.6729)
    (9.0268,0.6729)
    (9.2139,0.6822)
    (10.7285,0.6822)
    (10.9565,0.6916)
    (16.5539,0.6916)
    (16.8817,0.7009)
    (20.5411,0.7009)
    (20.9393,0.7103)
    (22.7966,0.7196)
    (23.8326,0.7196)
    (26.0474,0.7290)
    (32.0356,0.7290)
    (33.4061,0.7383)
    (34.1242,0.7477)
    (35.5712,0.7477)
    (39.7143,0.7570)
    (111.3129,0.7570)
    (120.2771,0.7664)
    (332.6204,0.7664)
    (395.9383,0.7757)
    (432.2144,0.7757)
    (475.9526,0.7850)
    (5160.8910,0.7850)
    (8777.9300,0.7944)
    (9216.8265,0.7944)
  };
  \addlegendentry{BP$^\star$}
  \end{axis}
\end{tikzpicture}
        \vspace{-1em}
    \end{subfigure}
    \hfill
    \begin{subfigure}[t]{0.48\textwidth}
        \centering
        \begin{tikzpicture}
 \begin{axis}[const plot,
  cycle list={
  {blue, dashed},
  {green!80!black, dashdotted},
  {red,solid},
  {pink, solid}},
                xmode = log,
                log basis x = 10,
                xmin = 1, xmax = 1000,
                xtick = {10, 100, 1000},
                xticklabels = {$10$, $10^2$, $10^3$},
                ymin=0.00, ymax=0.85,
                ymajorgrids,
                ytick={0,0.2,0.4,0.6,0.8,1.0},
                yticklabels={0,20,40,60,80,100},
                xlabel={ $\tau$ ($k=5$)},
                ylabel={ \% of weighted instances},
                legend pos={south east},
                width=\textwidth,
                height=5cm,
                tick align=outside,
                tick style={semithick},
                grid style={dashed,gray!70}
            ]
\addplot+[mark=none, line width=1.2pt] coordinates {
    (1.0000,0.1869)
    (1.0440,0.1869)
    (1.0440,0.1963)
    (1.1258,0.1963)
    (1.1277,0.2056)
    (1.1766,0.2056)
    (1.2437,0.2150)
    (1.2718,0.2243)
    (1.2935,0.2243)
    (1.3453,0.2336)
    (1.4291,0.2430)
    (1.5644,0.2430)
    (1.6494,0.2523)
    (1.7080,0.2617)
    (1.8215,0.2617)
    (1.8947,0.2710)
    (1.9454,0.2804)
    (1.9833,0.2897)
    (2.2099,0.2991)
    (2.3224,0.2991)
    (2.3567,0.3084)
    (2.4045,0.3178)
    (2.4052,0.3178)
    (2.4093,0.3271)
    (2.6154,0.3271)
    (2.6490,0.3364)
    (2.6548,0.3458)
    (2.6938,0.3551)
    (2.7934,0.3645)
    (2.8823,0.3645)
    (3.0599,0.3738)
    (3.1385,0.3832)
    (3.5793,0.3832)
    (3.9363,0.3925)
    (4.0979,0.4019)
    (4.7979,0.4019)
    (4.8036,0.4112)
    (4.8938,0.4112)
    (5.1657,0.4206)
    (5.4092,0.4206)
    (5.4287,0.4299)
    (5.4362,0.4299)
    (5.4909,0.4393)
    (5.9052,0.4393)
    (6.0591,0.4486)
    (6.3105,0.4579)
    (6.6281,0.4579)
    (6.9630,0.4673)
    (7.2993,0.4766)
    (8.8205,0.4766)
    (9.0377,0.4860)
    (9.3025,0.4953)
    (9.4056,0.5047)
    (9.8964,0.5140)
    (10.1804,0.5140)
    (10.2146,0.5234)
    (10.6001,0.5327)
    (11.4973,0.5421)
    (11.5108,0.5514)
    (12.1468,0.5514)
    (15.0980,0.5607)
    (16.3866,0.5701)
    (16.6532,0.5794)
    (17.4783,0.5794)
    (18.9134,0.5888)
    (19.1226,0.5981)
    (19.7606,0.6075)
    (21.5315,0.6075)
    (35.2552,0.6168)
    (39.7630,0.6262)
    (45.5495,0.6355)
    (49.6936,0.6449)
    (49.7760,0.6542)
    (63.0790,0.6542)
    (73.0702,0.6636)
    (91.1618,0.6636)
    (95.3602,0.6729)
    (97.2838,0.6729)
    (102.3565,0.6822)
    (102.6308,0.6916)
    (115.4220,0.7009)
    (116.6605,0.7103)
    (118.0136,0.7196)
    (133.2275,0.7196)
    (133.5716,0.7290)
    (156.3620,0.7290)
    (178.0815,0.7383)
    (185.7077,0.7477)
    (200.6736,0.7570)
    (204.2394,0.7664)
    (219.9668,0.7757)
    (257.4009,0.7757)
    (273.1901,0.7850)
    (310.5584,0.7850)
    (322.4711,0.7944)
    (341.2830,0.7944)
    (362.8952,0.8037)
    (431.4761,0.8037)
    (454.2858,0.8131)
    (472.0728,0.8224)
    (1014.9977,0.8224)
    (1384.5025,0.8318)
    (1962.7813,0.8411)
    (28575.7500,0.8411)
  };
  \addlegendentry{COMP}
  \addplot+[mark=none, line width=1.2pt] coordinates {
    (1.0000,0.3084)
    (1.0431,0.3084)
    (1.0440,0.3178)
    (1.0977,0.3271)
    (1.1258,0.3364)
    (1.1497,0.3364)
    (1.1539,0.3458)
    (1.4291,0.3458)
    (1.4329,0.3551)
    (1.4912,0.3645)
    (1.5531,0.3645)
    (1.5644,0.3738)
    (1.7456,0.3738)
    (1.7560,0.3832)
    (1.8215,0.3925)
    (2.2099,0.3925)
    (2.3224,0.4019)
    (2.4045,0.4019)
    (2.4052,0.4112)
    (2.4093,0.4112)
    (2.6154,0.4206)
    (3.5333,0.4206)
    (3.5793,0.4299)
    (4.4088,0.4299)
    (4.5570,0.4393)
    (4.8036,0.4393)
    (4.8938,0.4486)
    (5.4909,0.4486)
    (5.9052,0.4579)
    (6.3105,0.4579)
    (6.3503,0.4673)
    (6.6281,0.4766)
    (7.6746,0.4766)
    (7.9403,0.4860)
    (8.8205,0.4953)
    (11.5128,0.4953)
    (12.1468,0.5047)
    (16.6532,0.5047)
    (17.0117,0.5140)
    (17.4783,0.5234)
    (95.3602,0.5234)
    (97.2838,0.5327)
    (118.0136,0.5327)
    (127.6684,0.5421)
    (133.5716,0.5421)
    (137.3442,0.5514)
    (137.4734,0.5514)
    (155.6672,0.5607)
    (219.9668,0.5607)
    (222.4440,0.5701)
    (285.7113,0.5701)
    (310.5584,0.5794)
    (322.4711,0.5794)
    (331.7577,0.5888)
    (362.8952,0.5888)
    (431.4761,0.5981)
    (472.0728,0.5981)
    (705.5636,0.6075)
    (1962.7813,0.6075)
    (2047.4809,0.6168)
    (2873.6971,0.6262)
    (3049.7439,0.6262)
    (6169.9778,0.6355)
    (28575.7500,0.6355)
  };
  \addlegendentry{HYB}
  \addplot+[mark=none, line width=1.2pt] coordinates {
    (1.0000,0.4766)
    (1.0431,0.4860)
    (1.1277,0.4860)
    (1.1315,0.4953)
    (1.1497,0.5047)
    (1.1539,0.5047)
    (1.1766,0.5140)
    (1.2718,0.5140)
    (1.2935,0.5234)
    (1.4912,0.5234)
    (1.5531,0.5327)
    (1.7080,0.5327)
    (1.7456,0.5421)
    (2.7934,0.5421)
    (2.8823,0.5514)
    (3.1385,0.5514)
    (3.5333,0.5607)
    (4.0979,0.5607)
    (4.1237,0.5701)
    (4.4088,0.5794)
    (4.5570,0.5794)
    (4.7979,0.5888)
    (5.1657,0.5888)
    (5.4092,0.5981)
    (5.4287,0.5981)
    (5.4352,0.6075)
    (5.4362,0.6168)
    (7.2993,0.6168)
    (7.6746,0.6262)
    (9.8964,0.6262)
    (10.1804,0.6355)
    (11.5108,0.6355)
    (11.5128,0.6449)
    (19.7606,0.6449)
    (20.6476,0.6542)
    (21.5315,0.6636)
    (49.7760,0.6636)
    (63.0790,0.6729)
    (73.0702,0.6729)
    (76.9778,0.6822)
    (90.7957,0.6916)
    (91.1618,0.7009)
    (127.6684,0.7009)
    (133.2275,0.7103)
    (137.3442,0.7103)
    (137.4734,0.7196)
    (155.6672,0.7196)
    (156.3620,0.7290)
    (222.4440,0.7290)
    (257.4009,0.7383)
    (273.1901,0.7383)
    (285.7113,0.7477)
    (331.7577,0.7477)
    (341.2830,0.7570)
    (705.5636,0.7570)
    (811.3577,0.7664)
    (1014.9977,0.7757)
    (2873.6971,0.7757)
    (3049.7439,0.7850)
    (6169.9778,0.7850)
    (27215.0000,0.7944)
    (28575.7500,0.7944)
  };
  \addlegendentry{BP$^\star$}
  \end{axis}
\end{tikzpicture}
        \vspace{-1em}
    \end{subfigure}

    \vspace{0.7em}

    \begin{subfigure}[t]{0.48\textwidth}
        \centering
        \begin{tikzpicture}
 \begin{axis}[const plot,
  cycle list={
  {blue, dashed},
  {green!80!black, dashdotted},
  {red,solid},
  {pink, solid}},
                xmode = log,
                log basis x = 10,
                xmin = 1, xmax = 1000,
                xtick = {10, 100, 1000},
                xticklabels = {$10$, $10^2$, $10^3$},
                ymin=0.00, ymax=0.85,
                ymajorgrids,
                ytick={0,0.2,0.4,0.6,0.8,1.0},
                yticklabels={0,20,40,60,80,100},
                xlabel={ $\tau$ ($k=10$)},
                ylabel={ \% of instances},
                width=\textwidth,
                height=5cm,
                tick align=outside,
                tick style={semithick},
                grid style={dashed,gray!70}
            ]
 \addplot+[mark=none, line width=1.2pt] coordinates {
    (1.0000,0.0625)
    (1.1867,0.0625)
    (1.1977,0.0750)
    (2.0255,0.0750)
    (2.0574,0.0875)
    (2.2261,0.1000)
    (2.7425,0.1000)
    (2.8671,0.1125)
    (4.3657,0.1125)
    (4.6198,0.1250)
    (4.6486,0.1375)
    (4.9159,0.1500)
    (4.9286,0.1500)
    (5.0970,0.1625)
    (5.6899,0.1625)
    (5.8550,0.1750)
    (6.2153,0.1750)
    (6.6593,0.1875)
    (6.7614,0.2000)
    (8.1520,0.2000)
    (8.7157,0.2125)
    (10.3401,0.2125)
    (12.0930,0.2250)
    (12.4555,0.2375)
    (14.5909,0.2375)
    (17.7483,0.2500)
    (17.8941,0.2500)
    (20.1045,0.2625)
    (26.5951,0.2625)
    (28.4231,0.2750)
    (30.5639,0.2750)
    (32.8433,0.2875)
    (38.3864,0.2875)
    (42.6384,0.3000)
    (52.0443,0.3000)
    (54.8980,0.3125)
    (86.2026,0.3125)
    (88.5444,0.3250)
    (139.8211,0.3375)
    (184.7880,0.3375)
    (214.7861,0.3500)
    (218.8080,0.3625)
    (264.9498,0.3750)
    (557.8579,0.3875)
    (3289.9394,0.4000)
    (3454.4363,0.4000)
  };
  \addplot+[mark=none, line width=1.2pt] coordinates {
    (1.0000,0.2875)
    (1.1451,0.2875)
    (1.1471,0.3000)
    (1.1977,0.3000)
    (1.2862,0.3125)
    (1.5877,0.3125)
    (1.6455,0.3250)
    (1.8360,0.3250)
    (1.8682,0.3375)
    (1.8732,0.3375)
    (2.0255,0.3500)
    (2.2261,0.3500)
    (2.5757,0.3625)
    (2.8671,0.3625)
    (3.0076,0.3750)
    (4.2596,0.3750)
    (4.3657,0.3875)
    (4.9159,0.3875)
    (4.9286,0.4000)
    (5.8550,0.4000)
    (5.9400,0.4125)
    (6.2153,0.4250)
    (8.7157,0.4250)
    (9.4505,0.4375)
    (12.4691,0.4375)
    (12.5831,0.4500)
    (14.5909,0.4625)
    (17.7483,0.4625)
    (17.8941,0.4750)
    (21.8254,0.4750)
    (26.5951,0.4875)
    (28.4231,0.4875)
    (30.5639,0.5000)
    (32.8433,0.5000)
    (33.8291,0.5125)
    (42.6384,0.5125)
    (51.4602,0.5250)
    (52.0443,0.5375)
    (54.8980,0.5375)
    (62.2206,0.5500)
    (80.9847,0.5500)
    (86.2026,0.5625)
    (147.7360,0.5625)
    (184.7880,0.5750)
    (3454.4363,0.5750)
  };
  \addplot+[mark=none, line width=1.2pt] coordinates {
    (1.0000,0.5625)
    (1.0095,0.5750)
    (1.1451,0.5875)
    (1.1471,0.5875)
    (1.1867,0.6000)
    (1.2862,0.6000)
    (1.5877,0.6125)
    (1.6455,0.6125)
    (1.8360,0.6250)
    (1.8682,0.6250)
    (1.8732,0.6375)
    (2.5757,0.6375)
    (2.7425,0.6500)
    (3.0076,0.6500)
    (3.7438,0.6625)
    (4.2596,0.6750)
    (5.0970,0.6750)
    (5.6899,0.6875)
    (6.7614,0.6875)
    (8.1520,0.7000)
    (9.4505,0.7000)
    (10.3401,0.7125)
    (12.4555,0.7125)
    (12.4691,0.7250)
    (20.1045,0.7250)
    (21.8254,0.7375)
    (33.8291,0.7375)
    (37.4043,0.7500)
    (38.3864,0.7625)
    (62.2206,0.7625)
    (80.9847,0.7750)
    (139.8211,0.7750)
    (147.7360,0.7875)
    (3454.4363,0.7875)
  };
  \end{axis}
\end{tikzpicture}
        \vspace{-1em}
    \end{subfigure}
    \hfill
    \begin{subfigure}[t]{0.48\textwidth}
        \centering
        \begin{tikzpicture}
 \begin{axis}[const plot,
  cycle list={
  {blue, dashed},
  {green!80!black, dashdotted},
  {red,solid},
  {pink, solid}},
                xmode = log,
                log basis x = 10,
                xmin = 1, xmax = 1000,
                xtick = {10, 100, 1000},
                xticklabels = {$10$, $10^2$, $10^3$},
                ymin=0.00, ymax=0.85,
                ymajorgrids,
                ytick={0,0.2,0.4,0.6,0.8,1.0},
                yticklabels={0,20,40,60,80,100},
                xlabel={ $\tau$ ($k=10$)},
                ylabel={ \% of weighted instances},
                width=\textwidth,
                height=5cm,
                tick align=outside,
                tick style={semithick},
                grid style={dashed,gray!70}
            ]
 \addplot+[mark=none, line width = 1.2pt] coordinates {
    (1.0000,0.0500)
    (1.5251,0.0500)
    (1.5767,0.0625)
    (1.7245,0.0625)
    (1.9884,0.0750)
    (2.0277,0.0750)
    (2.0949,0.0875)
    (2.2730,0.0875)
    (2.4551,0.1000)
    (3.0779,0.1000)
    (3.6372,0.1125)
    (4.0888,0.1125)
    (4.1386,0.1250)
    (5.3503,0.1250)
    (5.4724,0.1375)
    (5.5712,0.1500)
    (5.5869,0.1500)
    (5.9567,0.1625)
    (6.0553,0.1625)
    (6.6193,0.1750)
    (6.8826,0.1875)
    (7.1113,0.1875)
    (8.0826,0.2000)
    (9.9580,0.2125)
    (10.9792,0.2125)
    (13.0655,0.2250)
    (15.8720,0.2250)
    (17.3674,0.2375)
    (18.3213,0.2375)
    (20.0567,0.2500)
    (21.1383,0.2625)
    (21.1546,0.2750)
    (23.1652,0.2875)
    (27.8015,0.2875)
    (30.4676,0.3000)
    (31.7440,0.3000)
    (31.8956,0.3125)
    (32.8713,0.3125)
    (37.1990,0.3250)
    (39.4467,0.3375)
    (62.7710,0.3375)
    (86.9791,0.3500)
    (108.5249,0.3500)
    (111.6601,0.3625)
    (132.2587,0.3750)
    (186.8991,0.3750)
    (216.2333,0.3875)
    (236.9186,0.3875)
    (261.3157,0.4000)
    (305.4618,0.4000)
    (639.6030,0.4125)
    (1347.1068,0.4125)
    (1367.9852,0.4250)
    (1824.4786,0.4250)
  };
  \addplot+[mark=none, line width = 1.2pt] coordinates {
    (1.0000,0.4000)
    (1.1289,0.4125)
    (1.4892,0.4125)
    (1.5251,0.4250)
    (2.0990,0.4250)
    (2.2730,0.4375)
    (4.1386,0.4375)
    (4.3124,0.4500)
    (5.5712,0.4500)
    (5.5869,0.4625)
    (6.8826,0.4625)
    (6.9972,0.4750)
    (13.0655,0.4750)
    (14.9346,0.4875)
    (15.8720,0.5000)
    (17.3674,0.5000)
    (18.3213,0.5125)
    (23.1652,0.5125)
    (27.8015,0.5250)
    (30.4676,0.5250)
    (30.7536,0.5375)
    (31.7440,0.5500)
    (31.8956,0.5500)
    (32.8713,0.5625)
    (39.4467,0.5625)
    (57.3645,0.5750)
    (102.3794,0.5750)
    (108.5249,0.5875)
    (261.3157,0.5875)
    (267.6973,0.6000)
    (639.6030,0.6000)
    (1347.1068,0.6125)
    (1824.4786,0.6125)
  };
  \addplot+[mark=none, line width = 1.2pt] coordinates {
    (1.0000,0.4500)
    (1.1289,0.4500)
    (1.2813,0.4625)
    (1.3527,0.4750)
    (1.3871,0.4875)
    (1.4400,0.5000)
    (1.4892,0.5125)
    (1.5767,0.5125)
    (1.7245,0.5250)
    (1.9884,0.5250)
    (2.0277,0.5375)
    (2.0949,0.5375)
    (2.0990,0.5500)
    (2.4551,0.5500)
    (2.6550,0.5625)
    (3.0779,0.5750)
    (3.6372,0.5750)
    (4.0888,0.5875)
    (4.3124,0.5875)
    (5.1286,0.6000)
    (5.3503,0.6125)
    (5.9567,0.6125)
    (6.0553,0.6250)
    (6.9972,0.6250)
    (7.1113,0.6375)
    (9.9580,0.6375)
    (9.9831,0.6500)
    (10.6455,0.6625)
    (10.9792,0.6750)
    (57.3645,0.6750)
    (62.7710,0.6875)
    (86.9791,0.6875)
    (102.3794,0.7000)
    (132.2587,0.7000)
    (186.8991,0.7125)
    (216.2333,0.7125)
    (221.5870,0.7250)
    (236.9186,0.7375)
    (267.6973,0.7375)
    (305.4618,0.7500)
    (1367.9852,0.7500)
    (1737.5987,0.7625)
    (1824.4786,0.7625)
  };
  \end{axis}
\end{tikzpicture}
        \vspace{-1em}
    \end{subfigure}

    \vspace{0.7em}

    \begin{subfigure}[t]{0.48\textwidth}
        \centering
        \begin{tikzpicture}
 \begin{axis}[const plot,
  cycle list={
  {blue, dashed},
  {green!80!black, dashdotted},
  {red,solid},
  {pink, solid}},
                xmode = log,
                log basis x = 10,
                xmin = 1, xmax = 1000,
                xtick = {10, 100, 1000},
                xticklabels = {$10$, $10^2$, $10^3$},
                ymin=0.00, ymax=0.85,
                ymajorgrids,
                ytick={0,0.2,0.4,0.6,0.8,1.0},
                yticklabels={0,20,40,60,80,100},
                xlabel={ $\tau$ ($k=15$)},
                ylabel={ \% of instances},
                width=\textwidth,
                height=5cm,
                tick align=outside,
                tick style={semithick},
                grid style={dashed,gray!70}
            ]
   \addplot+[mark=none, line width=1.2pt] coordinates {
    (1.0000,0.0154)
    (1.0742,0.0308)
    (1.1669,0.0308)
    (1.2900,0.0462)
    (2.8973,0.0462)
    (3.5179,0.0615)
    (3.5636,0.0615)
    (4.7059,0.0769)
    (6.3981,0.0769)
    (6.4592,0.0923)
    (6.5327,0.0923)
    (9.0175,0.1077)
    (9.1334,0.1231)
    (10.3924,0.1231)
    (11.0255,0.1385)
    (11.0737,0.1538)
    (11.3439,0.1692)
    (13.7300,0.1692)
    (15.9131,0.1846)
    (22.9957,0.1846)
    (23.0452,0.2000)
    (33.7945,0.2000)
    (34.7656,0.2154)
    (59.7357,0.2154)
    (89.3634,0.2308)
    (156.7895,0.2462)
    (158.8168,0.2615)
    (176.5662,0.2769)
    (256.3028,0.2769)
    (262.8137,0.2923)
    (270.2555,0.3077)
    (318.0624,0.3231)
    (423.9607,0.3231)
    (799.8141,0.3385)
    (2305.7361,0.3538)
    (4570.3407,0.3692)
    (4645.4717,0.3846)
    (4944.3457,0.4000)
    (6612.0000,0.4000)
    (9988.6362,0.4154)
    (12066.7956,0.4308)
    (12670.1354,0.4308)
  };
  \addplot+[mark=none, line width=1.2pt] coordinates {
    (1.0000,0.3231)
    (2.1937,0.3231)
    (2.2108,0.3385)
    (2.3844,0.3538)
    (2.5066,0.3692)
    (2.5245,0.3692)
    (2.8973,0.3846)
    (3.5179,0.3846)
    (3.5636,0.4000)
    (4.7059,0.4000)
    (4.7077,0.4154)
    (5.2009,0.4308)
    (5.8364,0.4462)
    (5.9364,0.4615)
    (6.1045,0.4769)
    (6.4592,0.4769)
    (6.5327,0.4923)
    (9.1334,0.4923)
    (10.3924,0.5077)
    (11.5004,0.5077)
    (13.7300,0.5231)
    (15.9131,0.5231)
    (22.9957,0.5385)
    (23.0452,0.5385)
    (24.1600,0.5538)
    (25.0737,0.5692)
    (26.4215,0.5692)
    (27.9032,0.5846)
    (29.0308,0.6000)
    (33.7515,0.6000)
    (33.7945,0.6154)
    (34.7656,0.6154)
    (35.6650,0.6308)
    (36.9638,0.6462)
    (176.5662,0.6462)
    (256.3028,0.6615)
    (318.0624,0.6615)
    (423.9607,0.6769)
    (12670.1354,0.6769)
  };
  \addplot+[mark=none, line width=1.2pt] coordinates {
    (1.0000,0.6000)
    (1.0742,0.6000)
    (1.1669,0.6154)
    (1.2900,0.6154)
    (1.3054,0.6308)
    (2.1937,0.6462)
    (2.5066,0.6462)
    (2.5245,0.6615)
    (6.1045,0.6615)
    (6.3981,0.6769)
    (11.3439,0.6769)
    (11.5004,0.6923)
    (25.0737,0.6923)
    (26.4215,0.7077)
    (29.0308,0.7077)
    (31.3221,0.7231)
    (33.7515,0.7385)
    (36.9638,0.7385)
    (59.7357,0.7538)
    (4944.3457,0.7538)
    (6612.0000,0.7692)
    (12670.1354,0.7692)
  };
  \end{axis}
\end{tikzpicture}
        \vspace{-1em}
    \end{subfigure}
    \hfill
    \begin{subfigure}[t]{0.48\textwidth}
        \centering
        \begin{tikzpicture}
 \begin{axis}[const plot,
  cycle list={
  {blue, dashed},
  {green!80!black, dashdotted},
  {red,solid},
  {pink, solid}},
                xmode = log,
                log basis x = 10,
                xmin = 1, xmax = 1000,
                xtick = {10, 100, 1000},
                xticklabels = {$10$, $10^2$, $10^3$},
                ymin=0.00, ymax=0.85,
                ymajorgrids,
                ytick={0,0.2,0.4,0.6,0.8,1.0},
                yticklabels={0,20,40,60,80,100},
                xlabel={ $\tau$ ($k=15$)},
                ylabel={ \% of weighted instances},
                width=\textwidth,
                height=5cm,
                tick align=outside,
                tick style={semithick},
                grid style={dashed,gray!70}
            ]
  \addplot+[mark=none, line width = 1.2pt] coordinates {
    (1.0000,0.0308)
    (1.1405,0.0462)
    (1.7506,0.0462)
    (1.8070,0.0615)
    (2.4809,0.0615)
    (2.6872,0.0769)
    (2.7580,0.0769)
    (2.8729,0.0923)
    (3.1299,0.0923)
    (3.2084,0.1077)
    (6.2374,0.1077)
    (8.7197,0.1231)
    (10.5840,0.1231)
    (22.0363,0.1385)
    (32.8382,0.1385)
    (34.5872,0.1538)
    (35.0070,0.1692)
    (40.1010,0.1846)
    (53.7204,0.2000)
    (62.7914,0.2154)
    (99.1811,0.2154)
    (102.4474,0.2308)
    (108.7748,0.2462)
    (115.0896,0.2615)
    (116.2426,0.2769)
    (118.3864,0.2923)
    (125.8371,0.2923)
    (158.4490,0.3077)
    (174.0919,0.3231)
    (182.1680,0.3385)
    (239.5450,0.3385)
    (328.6079,0.3538)
    (330.6965,0.3538)
    (357.9282,0.3692)
    (598.5584,0.3692)
    (672.0688,0.3846)
    (2846.2967,0.3846)
    (4256.3936,0.4000)
    (4403.7769,0.4154)
    (8063.9390,0.4308)
    (35766.0450,0.4308)
  };
  \addplot+[mark=none, line width = 1.2pt] coordinates {
    (1.0000,0.4462)
    (1.1405,0.4462)
    (1.3371,0.4615)
    (1.4865,0.4615)
    (1.5083,0.4769)
    (1.6014,0.4769)
    (1.7207,0.4923)
    (1.7506,0.5077)
    (1.8070,0.5077)
    (2.2770,0.5231)
    (2.4132,0.5385)
    (2.4809,0.5538)
    (2.8729,0.5538)
    (3.1299,0.5692)
    (3.2084,0.5692)
    (3.7864,0.5846)
    (4.1776,0.6000)
    (4.4164,0.6154)
    (4.8989,0.6154)
    (5.2263,0.6308)
    (8.7197,0.6308)
    (9.2242,0.6462)
    (81.1166,0.6462)
    (99.1811,0.6615)
    (372.8373,0.6615)
    (598.5584,0.6769)
    (672.0688,0.6769)
    (2846.2967,0.6923)
    (35766.0450,0.6923)
  };
  \addplot+[mark=none, line width = 1.2pt] coordinates {
    (1.0000,0.4615)
    (1.3371,0.4615)
    (1.4865,0.4769)
    (1.5083,0.4769)
    (1.6014,0.4923)
    (2.6872,0.4923)
    (2.7580,0.5077)
    (4.4164,0.5077)
    (4.6230,0.5231)
    (4.8111,0.5385)
    (4.8989,0.5538)
    (5.2263,0.5538)
    (5.6448,0.5692)
    (6.2374,0.5846)
    (9.2242,0.5846)
    (9.8965,0.6000)
    (10.5840,0.6154)
    (22.0363,0.6154)
    (22.7868,0.6308)
    (23.5486,0.6462)
    (32.8382,0.6615)
    (62.7914,0.6615)
    (81.1166,0.6769)
    (118.3864,0.6769)
    (125.8371,0.6923)
    (182.1680,0.6923)
    (239.5450,0.7077)
    (328.6079,0.7077)
    (330.6965,0.7231)
    (357.9282,0.7231)
    (372.8373,0.7385)
    (8063.9390,0.7385)
    (22972.7000,0.7538)
    (34062.9000,0.7692)
    (35766.0450,0.7692)
  };
  \end{axis}
\end{tikzpicture}
        \vspace{-1em}
    \end{subfigure}

    \vspace{0.7em}

    \begin{subfigure}[t]{0.48\textwidth}
        \centering
        \begin{tikzpicture}
 \begin{axis}[const plot,
  cycle list={
  {blue, dashed},
  {green!80!black, dashdotted},
  {red,solid},
  {pink, solid}},
                xmode = log,
                log basis x = 10,
                xmin = 1, xmax = 1000,
                xtick = {10, 100, 1000},
                xticklabels = {$10$, $10^2$, $10^3$},
                ymin=0.00, ymax=0.85,
                ymajorgrids,
                ytick={0,0.2,0.4,0.6,0.8,1.0},
                yticklabels={0,20,40,60,80,100},
                xlabel={ $\tau$ ($k=20$)},
                ylabel={ \% of instances},
                width=\textwidth,
                height=5cm,
                tick align=outside,
                tick style={semithick},
                grid style={dashed,gray!70}
            ]
   \addplot+[mark=none, line width=1.2pt] coordinates {
    (1.0000,0.0192)
    (1.2407,0.0385)
    (1.7042,0.0385)
    (1.9162,0.0577)
    (2.0444,0.0769)
    (2.1401,0.0769)
    (2.4883,0.0962)
    (25.6483,0.0962)
    (26.4518,0.1154)
    (95.7478,0.1154)
    (105.7809,0.1346)
    (125.7441,0.1538)
    (145.9650,0.1731)
    (193.9794,0.1923)
    (250.7633,0.2115)
    (671.8985,0.2115)
    (859.8274,0.2308)
    (968.5870,0.2500)
    (1020.0760,0.2500)
    (1303.7618,0.2692)
    (1474.5209,0.2885)
    (2494.3227,0.3077)
    (4705.3150,0.3269)
    (4940.5808,0.3269)
  };
  \addplot+[mark=none, line width=1.2pt] coordinates {
    (1.0000,0.4615)
    (1.2407,0.4615)
    (1.2549,0.4808)
    (1.3889,0.5000)
    (2.0444,0.5000)
    (2.0646,0.5192)
    (2.1401,0.5385)
    (7.0201,0.5385)
    (7.2744,0.5577)
    (13.3030,0.5577)
    (13.4668,0.5769)
    (13.6630,0.5769)
    (16.4922,0.5962)
    (24.1781,0.5962)
    (25.2763,0.6154)
    (40.3207,0.6154)
    (41.6219,0.6346)
    (42.7421,0.6538)
    (71.9681,0.6731)
    (85.2343,0.6923)
    (95.6492,0.6923)
    (95.7478,0.7115)
    (250.7633,0.7115)
    (671.8985,0.7308)
    (4940.5808,0.7308)
  };
  \addplot+[mark=none, line width=1.2pt] coordinates {
    (1.0000,0.5577)
    (1.3889,0.5577)
    (1.7042,0.5769)
    (2.4883,0.5769)
    (2.5526,0.5962)
    (2.9606,0.6154)
    (3.2582,0.6346)
    (7.0201,0.6538)
    (7.2744,0.6538)
    (13.3030,0.6731)
    (13.4668,0.6731)
    (13.6630,0.6923)
    (16.4922,0.6923)
    (16.6698,0.7115)
    (24.1781,0.7308)
    (25.2763,0.7308)
    (25.6483,0.7500)
    (26.4518,0.7500)
    (32.8086,0.7692)
    (40.3207,0.7885)
    (85.2343,0.7885)
    (88.9553,0.8077)
    (95.6492,0.8269)
    (968.5870,0.8269)
    (1020.0760,0.8462)
    (4940.5808,0.8462)
  };
  \end{axis}
\end{tikzpicture}
        \vspace{-1em}
    \end{subfigure}
    \hfill
    \begin{subfigure}[t]{0.48\textwidth}
        \centering
        \begin{tikzpicture}
 \begin{axis}[const plot,
  cycle list={
  {blue, dashed},
  {green!80!black, dashdotted},
  {red,solid},
  {pink, solid}},
                xmode = log,
                log basis x = 10,
                xmin = 1, xmax = 1000,
                xtick = {10, 100, 1000},
                xticklabels = {$10$, $10^2$, $10^3$},
                ymin=0.00, ymax=0.85,
                ymajorgrids,
                ytick={0,0.2,0.4,0.6,0.8,1.0},
                yticklabels={0,20,40,60,80,100},
                xlabel={ $\tau$ ($k=20$)},
                ylabel={ \% of weighted instances},
                width=\textwidth,
                height=5cm,
                tick align=outside,
                tick style={semithick},
                grid style={dashed,gray!70}
            ]
  \addplot+[mark=none, line width=1.2pt] coordinates {
    (1.0000,0.0000)
    (1.1099,0.0192)
    (1.3040,0.0192)
    (1.4745,0.0385)
    (1.5752,0.0385)
    (1.7835,0.0577)
    (3.9803,0.0577)
    (4.2573,0.0769)
    (6.4826,0.0769)
    (6.5722,0.0962)
    (70.0689,0.0962)
    (90.6284,0.1154)
    (103.9594,0.1346)
    (128.3948,0.1538)
    (190.3462,0.1538)
    (229.2843,0.1731)
    (244.2913,0.1923)
    (289.4041,0.1923)
    (358.5663,0.2115)
    (370.0695,0.2308)
    (385.9903,0.2500)
    (431.0781,0.2500)
    (507.3267,0.2692)
    (805.1536,0.2885)
    (848.1569,0.3077)
    (3005.2399,0.3269)
    (3963.8500,0.3269)
    (8115.5606,0.3462)
    (8521.3386,0.3462)
  };
  \addplot+[mark=none, line width=1.2pt] coordinates {
    (1.0000,0.5385)
    (1.1395,0.5385)
    (1.3040,0.5577)
    (1.4745,0.5577)
    (1.5752,0.5769)
    (1.7835,0.5769)
    (2.3153,0.5962)
    (2.9221,0.5962)
    (3.3871,0.6154)
    (3.4699,0.6154)
    (3.5232,0.6346)
    (3.9545,0.6346)
    (3.9803,0.6538)
    (6.5722,0.6538)
    (7.7515,0.6731)
    (14.3229,0.6923)
    (29.4595,0.6923)
    (40.2714,0.7115)
    (41.1486,0.7308)
    (49.3756,0.7308)
    (55.2078,0.7500)
    (8521.3386,0.7500)
  };
  \addplot+[mark=none, line width=1.2pt] coordinates {
    (1.0000,0.4808)
    (1.1099,0.4808)
    (1.1395,0.5000)
    (2.3153,0.5000)
    (2.6316,0.5192)
    (2.9221,0.5385)
    (3.3871,0.5385)
    (3.4699,0.5577)
    (3.5232,0.5577)
    (3.9545,0.5769)
    (4.2573,0.5769)
    (5.6472,0.5962)
    (6.4826,0.6154)
    (14.3229,0.6154)
    (24.6037,0.6346)
    (25.0763,0.6538)
    (29.4595,0.6731)
    (41.1486,0.6731)
    (43.5118,0.6923)
    (46.4218,0.7115)
    (49.3756,0.7308)
    (55.2078,0.7308)
    (70.0689,0.7500)
    (128.3948,0.7500)
    (190.3462,0.7692)
    (244.2913,0.7692)
    (289.4041,0.7885)
    (385.9903,0.7885)
    (431.0781,0.8077)
    (3005.2399,0.8077)
    (3963.8500,0.8269)
    (8521.3386,0.8269)
  };
  \end{axis}
\end{tikzpicture}
        \vspace{-1em}
    \end{subfigure}

    \medskip

    \caption{Performance profiles of algorithms COMP, HYB and BP$^\star$ on the full set of unweighted and weighted benchmark instances, with a time limit of one hour.}
    \label{fig:perf-all}
\end{figure}

Table \ref{tab:detailed_comparison_results} reports a detailed performance comparison of algorithms {COMP}, {HYB} and {BP$^\star$} on the full benchmark set, separately for unweighted and weighted instances. For each family of instances and each value of $k$, the table lists the number of instances considered (\#inst) and, for each algorithm, the number of instances solved to proven optimality (\#opt), the average running time in seconds on solved instances (time), the average optimality gap at the time limit, computed over unsolved instances only (gap), and the average number of explored branch-and-bound nodes (nodes) on solved instances. To quantify how far an algorithm is from proving optimality when the time limit expires, we measure an \emph{optimality gap} based on its best primal and dual information: let $z^{\mathrm{UB}}$ denote the best incumbent value found by an algorithm at time limit and $z^{\mathrm{LB}}$ the best dual bound available at that time. For each instance that is not closed to proven optimality within the time limit, we define the {optimality gap} as $100 \cdot \frac{z^{\mathrm{UB}} - z^{\mathrm{LB}}}{z^{\mathrm{UB}}}$, which is therefore expressed as a percentage.
If an algorithm fails to produce a valid primal or dual bound before the time limit, we conservatively set the optimality gap to $100\%$, corresponding to the worst possible case.

Beyond the number of instances solved, already discussed in Section \ref{sec:comparison_exact}, {BP$^\star$} also tends to explore substantially fewer branch-and-bound nodes on the instances that it solves. This is consistent with the tighter LP relaxation bound provided by the \zigbases formulation, which leads to stronger pruning in the search tree. In many families and values of $k$, {BP$^\star$} also has smaller average optimality gaps on the unsolved instances. These features are visible in both the unweighted and weighted sections of the table.
The distribution of the results across families reflects the complementary strengths of the algorithms. For {\tt Intersection} instances, {BP$^\star$} clearly dominates: it solves almost all unweighted and weighted cases, often with fewer nodes and smaller gaps than {HYB} and {COMP}. For {Partitioning} instances, the picture is more mixed, with {BP$^\star$}, {HYB}, and {COMP} each taking the lead in different $(\text{family},k)$ combinations, but {BP$^\star$} still maintaining competitive performance in terms of number of solved instances. Finally, on {\tt Coloring} instances, {BP$^\star$} remains competitive but less dominant: {HYB} and {COMP} solve more instances in several~$k$ configurations, often closing them in a smaller amount of time as well.

\begin{table}[h!]
\centering
\small
\renewcommand
\arraystretch{1.5}
\tabcolsep=2.5pt
\caption{Performance comparison of algorithms COMP, HYB and BP$^\star$ on the full set {of} benchmark instances, with a time limit of one hour.}
\label{tab:detailed_comparison_results}
\begin{tabular}{lrrrrrrrrrrrrrrrrr}
\hline
                      &     &        &  & \multicolumn{4}{c}{\textbf{BP$^\star$}} &  & \multicolumn{4}{c}{\textbf{HYB}}   &  & \multicolumn{4}{c}{\textbf{COMP}}        \\ \cline{5-8} \cline{10-13} \cline{15-18} 
Family                & $k$ & \#inst &  & \#opt         & time   & gap    & nodes &  & \#opt       & time  & gap  & nodes &  & \#opt       & time  & gap & nodes       \\ \cline{1-3} \cline{5-8} \cline{10-13} \cline{15-18} 
\\[-2ex]
\multicolumn{18}{c}{\textbf{\normalsize Unweighted Instances}}                                                                                                                      \\[1.5ex]
\texttt{Coloring}     & 5   & 42     &  & 23            & 54.8   & 42.3   & 20.3  &  & 26          & 131.0 & 70.6 & 45.0  &  & \textbf{32} & 250.8 & 61.9 & 19{,}597.8  \\
                      & 10  & 32     &  & 20            & 222.5  & 35.6   & 8.8   &  & \textbf{22} & 65.4  & 52.0 & 15.9  &  & 11          & 358.6 & 63.2 & 30{,}866.1  \\
                      & 15  & 29     &  & 19            & 71.5   & 41.4   & 3.9   &  & \textbf{25} & 78.3  & 68.5 & 14.1  &  & 11          & 302.1 & 64.1 & 30{,}689.5  \\
                      & 20  & 28     &  & 21            & 172.3  & 37.4   & 39.9  &  & \textbf{26} & 125.1 & 61.7 & 22.4  &  & 10          & 874.9 & 68.0 & 91{,}338.9  \\[1.5ex]
\texttt{Partitioning} & 5   & 9      &  & \textbf{8}    & 273.6  & 23.2   & 5.6   &  & \textbf{8}  & 40.2  & 72.8 & 11.0  &  & \textbf{8}  & 27.0  & 29.3 & 2{,}627.4   \\
                      & 10  & 9      &  & 5             & 19.6   & 59.0   & 10.2  &  & \textbf{8}  & 228.7 & 83.5 & 12.1  &  & 7           & 274.7 & 60.8 & 2{,}414.6   \\
                      & 15  & 9      &  & 6             & 39.9   & 70.6   & 4.0   &  & \textbf{7}  & 37.4  & 83.3 & 6.0   &  & 6           & 261.5 & 56.2 & 15{,}589.7  \\
                      & 20  & 8      &  & \textbf{7}    & 33.6   & 100.0  & 12.4  &  & 6           & 225.8 & 61.0 & 19.3  &  & 2           & 9.7   & 44.9 & 1{,}176.0   \\[1.5ex]
\texttt{Intersection} & 5   & 56     &  & \textbf{54}   & 91.4   & 57.9   & 78.9  &  & 33          & 207.3 & 54.9 & 145.0 &  & 48          & 201.2 & 54.0 & 38{,}283.7  \\
                      & 10  & 39     &  & \textbf{38}   & 146.2  & 15.7   & 113.8 &  & 16          & 208.4 & 51.4 & 507.2 &  & 14          & 18.0  & 62.2 & 5{,}716.1   \\
                      & 15  & 27     &  & \textbf{25}   & 34.0   & 18.4   & 16.4  &  & 12          & 97.6  & 50.0 & 61.1  &  & 11          & 133.5 & 70.2 & 14{,}642.1  \\
                      & 20  & 16     &  & \textbf{16}   & 28.5   & -      & 25.8  &  & 6           & 18.6  & 48.4 & 63.5  &  & 5           & 10.7  & 65.0 & 1{,}091.2   \\[2ex] \cline{1-3} \cline{5-8} \cline{10-13} \cline{15-18} 
Tot               &     & 304    &  & \textbf{242}  &        &        &       &  & 195         &       &      &       &  & 165         &       &      &             \\ \hline
\\[-2ex]
\multicolumn{18}{c}{\textbf{\normalsize Weighted Instances}}                                                                                                                        \\[1.5ex] 
\texttt{Coloring}     & 5   & 42     &  & 24            & 336.3  & 45.1   & 29.4  &  & 25          & 78.3  & 62.2 & 42.3  &  & \textbf{33} & 287.0 & 66.2 & 20{,}756.3  \\
                      & 10  & 32     &  & 18            & 51.0   & 42.0   & 17.3  &  & \textbf{22} & 115.7 & 50.7 & 31.8  &  & 14          & 215.3 & 59.2 & 29{,}885.9  \\
                      & 15  & 29     &  & 20            & 368.3  & 52.6   & 44.0  &  & \textbf{25} & 191.9 & 61.0 & 24.3  &  & 13          & 671.4 & 62.0 & 62{,}914.5  \\
                      & 20  & 28     &  & 20            & 331.7  & 40.0   & 38.4  &  & \textbf{26} & 228.1 & 62.9 & 36.0  &  & 10          & 712.6 & 53.2 & 17{,}182.6  \\[1.5ex]
\texttt{Partitioning} & 5   & 9      &  & 7             & 35.9   & 100.0  & 38.0  &  & 8           & 14.7  & 69.6 & 2.4   &  & \textbf{9}  & 376.4 & -    & 33{,}411.7  \\
                      & 10  & 9      &  & 7             & 510.1  & 60.5   & 9.1   &  & \textbf{8}  & 178.2 & 81.5 & 16.9  &  & 6           & 34.1  & 38.2 & 1{,}291.7   \\
                      & 15  & 9      &  & 6             & 232.9  & 69.6   & 11.7  &  & \textbf{7}  & 43.3  & 79.3 & 13.6  &  & 5           & 216.3 & 43.1 & 21{,}711.4  \\
                      & 20  & 8      &  & \textbf{7}    & 316.8  & 100.0  & 18.6  &  & 6           & 262.1 & 58.0 & 21.5  &  & 3           & 865.2 & 47.8 & 116{,}003.7 \\[1.5ex]
\texttt{Intersection} & 5   & 56     &  & \textbf{54}   & 119.7  & 59.2   & 30.4  &  & 35          & 210.5 & 53.1 & 160.1 &  & 48          & 185.0 & 39.6 & 25{,}108.7  \\
                      & 10  & 39     &  & \textbf{36}   & 26.3   & 43.0   & 25.3  &  & 19          & 230.4 & 51.2 & 206.6 &  & 14          & 26.5  & 57.2 & 8{,}957.6   \\
                      & 15  & 27     &  & \textbf{24}   & 57.1   & 14.4   & 9.2   &  & 13          & 35.7  & 40.9 & 84.2  &  & 10          & 11.3  & 60.2 & 5{,}592.8   \\
                      & 20  & 16     &  & \textbf{16}   & 77.6   & -      & 24.4  &  & 7           & 73.0  & 44.4 & 45.3  &  & 5           & 457.2 & 59.5 & 92{,}402.4  \\[2ex] \cline{1-3} \cline{5-8} \cline{10-13} \cline{15-18} 
Tot               &     & 304    &  & \textbf{239}  &        &        &       &  & 201         &       &      &       &  & 170         &       &      &             \\ \hline
\end{tabular}
\end{table}

In Tables \ref{tab:unweighted_best_known_coloring_partitioning}-\ref{tab:weighted_best_known_intersection} we report, for each instance family, the best known solution values for all values of $k$ for which the instances are defined. For each instance, the tables list the number of vertices $|\vertices|$, the number of edges $|\edges|$, the stability number $\alpha(G)$, and the best known objective value for each considered value of $k \in \{5,10,15,20\}$. These values are either proven optimal or represent the best solutions found in the literature or by our algorithm BP$^\star$.
Throughout these tables, bold values indicate proven optimal solution values. Values annotated with a red star (\textcolor{red}{$\star$}) are new optimal solution values obtained by our algorithm {BP$^\star$} within a time limit of one hour on our machines. Values marked with a dagger superscript ({$\dag$}) are optimal solution values that are attained by at least one among the algorithms {HYB} or {COMP}, but not by {BP$^\star$} in our experiments. Finally, values with a double dagger superscript ({$\ddag$}) are optimal solution values already known in the literature, but not matched by any of the algorithms considered in our computational study. Finally, instances denoted by a dot (.) are trivial instances solved in preprocessing.
For unweighted instances only, we also performed an additional campaign of long runs with a time limit of three days. These runs were restricted to a subset of instances that were not solved to optimality by any algorithm in the literature, including those tested in \citet{FurLMP2020}. The goal of this experiment was to assess how many of these open instances could be closed when allowing for a substantially larger computational effort. The instances that we were able to close in this setting are annotated with the red double star (\textcolor{red}{${\star\star}$}) in the tables.

The comparison with \citet{FurLMP2020} must be interpreted with care, because the computational results therein were obtained on more powerful machines than those available for our experiments. As a consequence, {HYB} and {COMP} solve a larger number of instances in \citet{FurLMP2020} than what is reflected by our own runs of these algorithms. All counts reported henceforth, including the classification into closed and open instances, are therefore based both on executions carried out on our hardware and on results from the literature.
Overall, 19 unweighted instances and 22 weighted instances are still open. {BP$^\star$} closes 33 unweighted instances that were previously open in the literature when using a time limit of one hour on our machines, and closes 6 additional unweighted instances when the time limit is extended to three days. For weighted instances, {BP$^\star$} closes 40 instances that were previously open in the literature. Most of these new optimal solutions concern the {\tt Intersection} family: 30 unweighted and 32 weighted instances from this family are closed for the first time by {BP$^\star$}.
On the other hand, there remain several instances that {BP$^\star$} does not solve to optimality within the imposed time limits, while other algorithms do. For unweighted instances, there are 37 such cases, either solved by {HYB} or {COMP} in \citet{FurLMP2020} or by our own runs of these algorithms, but not by {BP$^\star$}. For weighted instances, this situation occurs 43 times. The majority of these missed instances belong to the {\tt Coloring} family (30 in the unweighted case and 33 in the weighted case), which indicates that the approach underlying {BP$^\star$} has more difficulty with {\tt Coloring} instances, while being particularly effective on {\tt Intersection} instances. This behavior is in contrast with {HYB} and {COMP}, which tend to perform better on {\tt Coloring} and worse on {\tt Intersection}.

\begin{table}[h!]
\centering
\small
\renewcommand\arraystretch{1.7}
\tabcolsep=2.35pt
\caption{Best known values for unweighted instances from the \texttt{Coloring} and \texttt{Partitioning} families.}
\label{tab:unweighted_best_known_coloring_partitioning}
\begin{tabular}{lrrrlccccllrrrlcccc}
\hline
                    &       &       &             &  & \multicolumn{4}{c}{$k$}                                                                                                                  &  &                 &       &       &             &  & \multicolumn{4}{c}{$k$}                                                                                                                   \\ \cline{6-9} \cline{16-19} 
Instance            & $|\vertices|$ & $|\edges|$ & $\alpha(G)$ &  & 5                 & 10                                             & 15                                             & 20                 &  & Instance        & $|\vertices|$ & $|\edges|$ & $\alpha(G)$ &  & 5                                              & 10                                             & 15                 & 20                 \\ \cline{1-4} \cline{6-9} \cline{11-14} \cline{16-19} 
\tt 1-FullIns\_3    & 30    & 100   & 14          &  & \textbf{7}        & \textbf{11}                                    &                                                &                    &  & \tt mug88\_1    & 88    & 146   & 29          &  & \textbf{4}                                     & \textbf{9}                                     & \textbf{15}        & \textbf{20}        \\
\tt 1-FullIns\_4    & 93    & 593   & 45          &  & \textbf{9}$^\dag$ & \textbf{13}                                    & \textbf{18}$^\dag$                             & \textbf{22}        &  & \tt mug88\_25   & 88    & 146   & 29          &  & \textbf{4}                                     & \textbf{9}                                     & \textbf{14}        & \textbf{19}        \\
\tt 1-Insertions\_4 & 67    & 232   & 32          &  & \textbf{7}        & \textbf{12}                                    & \textbf{16}                                    & \textbf{22}        &  & \tt mulsol.i.2  & 188   & 3885  & 90          &  & .                                              & .                                              & .                  & \textbf{18}        \\
\tt 2-FullIns\_3    & 52    & 201   & 25          &  & \textbf{8}        & \textbf{13}                                    & \textbf{17}                                    & \textbf{23}        &  & \tt mulsol.i.3  & 184   & 3916  & 86          &  & .                                              & .                                              & \textbf{18}        & \textbf{19}        \\
\tt 2-Insertions\_3 & 37    & 72    & 18          &  & \textbf{6}        & \textbf{10}                                    & \textbf{16}                                    &                    &  & \tt mulsol.i.4  & 185   & 3946  & 86          &  & .                                              & .                                              & \textbf{18}        & \textbf{19}        \\
\tt 2-Insertions\_4 & 149   & 541   & 74          &  & \textbf{7}$^\dag$ & \textbf{11}                                    & \textbf{17}$^\ddag$ & \textbf{22}$^\dag$ &  & \tt mulsol.i.5  & 186   & 3973  & 88          &  & .                                              & .                                              & \textbf{18}        & \textbf{19}        \\
\tt 3-FullIns\_3    & 80    & 346   & 37          &  & \textbf{9}$^\dag$ & \textbf{14}$^\dag$                             & \textbf{17}                                    & \textbf{25}        &  & \tt myciel3     & 11    & 20    & 5           &  & .                                              &                                                &                    &                    \\
\tt 3-Insertions\_3 & 56    & 110   & 27          &  & \textbf{6}        & \textbf{11}                                    & \textbf{16}                                    & \textbf{21}        &  & \tt myciel4     & 23    & 71    & 11          &  & \textbf{7}                                     & \textbf{12}                                    &                    &                    \\
\tt 4-FullIns\_3    & 114   & 541   & 55          &  & \textbf{9}$^\dag$ & \textbf{15}$^\dag$                             & \textbf{18}$^\dag$                             & \textbf{23}$^\dag$ &  & \tt myciel5     & 47    & 236   & 23          &  & \textbf{8}                                     & \textbf{13}                                    & \textbf{18}        & \textbf{23}        \\
\tt 4-Insertions\_3 & 79    & 156   & 39          &  & \textbf{6}        & \textbf{11}                                    & \textbf{16}                                    & \textbf{21}        &  & \tt myciel6     & 95    & 755   & 47          &  & \textbf{9}$^\dag$                              & \textbf{14}$^\dag$                             & \textbf{19}$^\dag$ & \textbf{24}$^\dag$ \\
\tt 5-FullIns\_3    & 154   & 792   & 72          &  & \textbf{9}$^\dag$ & \textbf{15}$^\ddag$ & \textbf{19}$^\dag$                             & \textbf{22}$^\dag$ &  & \tt myciel7     & 191   & 2360  & 95          &  & \textbf{10}$^\dag$                             & \textbf{15}$^\ddag$ & \textbf{20}$^\dag$ & \textbf{25}$^\dag$ \\
\tt anna            & 138   & 493   & 80          &  & \textbf{1}        & \textbf{1}$^\dag$                              & \textbf{2}$^\dag$                              & \textbf{2}$^\dag$  &  & \tt queen10\_10 & 100   & 1470  & 10          &  & 70                                             & \textbf{90}                                    &                    &                    \\
\tt david           & 87    & 406   & 36          &  & .                 & .                                              & \textbf{4}                                     & \textbf{9}         &  & \tt queen11\_11 & 121   & 1980  & 11          &  & 82                                             & 108                                            &                    &                    \\
\tt DSJC125.1       & 125   & 736   & 34          &  & 21                & 42                                             & 60                                             & 72                 &  & \tt queen12\_12 & 144   & 2596  & 12          &  & 92                                             & 130                                            &                    &                    \\
\tt DSJC125.5       & 125   & 3891  & 10          &  & 102               & \textbf{115}                                   &                                                &                    &  & \tt queen13\_13 & 169   & 3328  & 13          &  & 105                                            & 151                                            &                    &                    \\
\tt games120        & 120   & 638   & 22          &  & 19                & \textbf{39}\textcolor{red}{$^{\star\star}$}                                             & \textbf{55}\textcolor{red}{$^{\star\star}$}                                             & \textbf{67}        &  & \tt queen14\_14 & 196   & 4186  & 14          &  & 117                                            & 174                                            &                    &                    \\
\tt huck            & 74    & 301   & 27          &  & \textbf{1}        & \textbf{3}                                     & \textbf{6}                                     & \textbf{9}         &  & \tt queen5\_5   & 25    & 160   & 5           &  & \textbf{20}                                    &                                                &                    &                    \\
\tt jean            & 80    & 254   & 38          &  & \textbf{1}        & \textbf{1}                                     & \textbf{2}                                     & \textbf{4}         &  & \tt queen6\_6   & 36    & 290   & 6           &  & \textbf{28}                                    &                                                &                    &                    \\
\tt miles1000       & 128   & 3216  & 8           &  & \textbf{53}       &                                                &                                                &                    &  & \tt queen7\_7   & 49    & 476   & 7           &  & \textbf{38}$^\dag$                             &                                                &                    &                    \\
\tt miles1500       & 128   & 5198  & 5           &  & \textbf{115}      &                                                &                                                &                    &  & \tt queen8\_12  & 96    & 1368  & 8           &  & 67                                             &                                                &                    &                    \\
\tt miles250        & 128   & 387   & 44          &  & .                 & .                                              & \textbf{4}$^\dag$                              & \textbf{11}        &  & \tt queen8\_8   & 64    & 728   & 8           &  & \textbf{48}$^\dag$                             &                                                &                    &                    \\
\tt miles500        & 128   & 1170  & 18          &  & \textbf{7}        & \textbf{25}\textcolor{red}{$^\star$}           & \textbf{55}\textcolor{red}{$^\star$}           &                    &  & \tt queen9\_9   & 81    & 1056  & 9           &  & \textbf{59}$^\ddag$ &                                                &                    &                    \\
\tt miles750        & 128   & 2113  & 12          &  & \textbf{20}       & \textbf{75}                                    &                                                &                    &  & \tt r125.1      & 125   & 209   & 49          &  & .                                              & .                                              & \textbf{1}         & \textbf{5}         \\
\tt mug100\_1       & 100   & 166   & 33          &  & \textbf{5}        & \textbf{10}                                    & \textbf{15}                                    & \textbf{20}        &  & \tt r125.1c     & 125   & 7501  & 7           &  & \textbf{116}                                   &                                                &                    &                    \\
\tt mug100\_25      & 100   & 166   & 33          &  & \textbf{5}        & \textbf{10}                                    & \textbf{15}                                    & \textbf{20}        &  & \tt r125.5      & 125   & 3838  & 5           &  & \textbf{91}                                    &                                                &                    &                    \\ \hline
\tt adjnoun        & 112   & 425   & 53          &  & \textbf{2} & \textbf{6}$^\dag$ & \textbf{11}       & \textbf{16}       &  & \tt jazz          & 198   & 2742  & 40          &  & \textbf{4} & \textbf{12}$^\dag$ & \textbf{25}$^\ddag$ & \textbf{44}\textcolor{red}{$^\star$} \\
\tt celegansneural & 297   & 2148  & 110         &  & \textbf{1} & \textbf{1}$^\dag$ & \textbf{2}$^\dag$ & \textbf{6}$^\dag$ &  & \tt karate        & 34    & 78    & 20          &  & \textbf{2} & \textbf{4}         & \textbf{6}                                     & \textbf{11}                          \\
\tt chesapeake     & 39    & 170   & 17          &  & \textbf{7} & \textbf{12}       & \textbf{17}       &                   &  & \tt lesmis        & 77    & 254   & 35          &  & \textbf{1} & \textbf{2}         & \textbf{3}                                     & \textbf{5}                           \\
\tt dolphins       & 62    & 159   & 28          &  & \textbf{2} & \textbf{7}        & \textbf{13}       & \textbf{19}       &  & \tt polbooks      & 105   & 441   & 43          &  & \textbf{8} & \textbf{15}        & \textbf{19}                                    & \textbf{25}                          \\
\tt football       & 115   & 613   & 21          &  & \textbf{21}\textcolor{red}{$^{\star\star}$}         & \textbf{43}\textcolor{red}{$^{\star\star}$}                & \textbf{60}\textcolor{red}{$^{\star\star}$}                & \textbf{71}       &  &                   &       &       &             &  &            &                    &                                                &                                      \\ \hline
\end{tabular}
\end{table}

\begin{table}[h!]
\centering
\small
\renewcommand\arraystretch{1.7}
\tabcolsep=1.75pt
\caption{Best known values for unweighted instances from the \texttt{Intersection} family. }
\label{tab:unweighted_best_known_intersection}
\begin{tabular}{lrrrlccccllrrrlcccc}
\hline
                  &       &       &             &  & \multicolumn{4}{c}{$k$}                                                                                                                                   &  &                     &       &       &             &  & \multicolumn{4}{c}{$k$}                                                                                                                                     \\ \cline{6-9} \cline{16-19} 
Instance          & $|\vertices|$ & $|\edges|$ & $\alpha(G)$ &  & 5                                    & 10                                   & 15                                   & 20                                   &  & Instance            & $|\vertices|$ & $|\edges|$ & $\alpha(G)$ &  & 5                                    & 10                                    & 15                                    & 20                                   \\ \cline{1-4} \cline{6-9} \cline{11-14} \cline{16-19} 
\tt arc130        & 130   & 7763  & 6           &  & \textbf{83}                          &                                      &                                      &                                      &  & \tt L120.fidap022   & 120   & 4307  & 5           &  & \textbf{87}                          &                                       &                                       &                                      \\
\tt ash219        & 85    & 219   & 29          &  & \textbf{7}                           & \textbf{16}                          & \textbf{26}                          & \textbf{34}                          &  & \tt L120.fidap025   & 120   & 2787  & 5           &  & .                                    &                                       &                                       &                                      \\
\tt ash331        & 104   & 331   & 30          &  & \textbf{8}                           & \textbf{21}                          & \textbf{33}\textcolor{red}{$^\star$} & \textbf{45}\textcolor{red}{$^\star$} &  & \tt L120.fidapm02   & 120   & 4626  & 5           &  & \textbf{91}                          &                                       &                                       &                                      \\
\tt ash85         & 85    & 616   & 14          &  & \textbf{22}                          & \textbf{46}\textcolor{red}{$^\star$} &                                      &                                      &  & \tt L120.rbs480a    & 120   & 3273  & 6           &  & \textbf{76}                          &                                       &                                       &                                      \\
\tt bcspwr01      & 39    & 118   & 13          &  & \textbf{7}                           & \textbf{16}                          &                                      &                                      &  & \tt L120.wm2        & 120   & 3387  & 23          &  & \textbf{3}$^\dag$                    & \textbf{8}                            & \textbf{13}                           & \textbf{41}                          \\
\tt bcspwr02      & 49    & 177   & 16          &  & \textbf{7}                           & \textbf{16}                          & \textbf{24}                          &                                      &  & \tt L125.ash608     & 125   & 390   & 37          &  & \textbf{8}                           & \textbf{19}\textcolor{red}{$^\star$}  & \textbf{30}\textcolor{red}{$^\star$}  & \textbf{41}\textcolor{red}{$^\star$} \\
\tt bcspwr03      & 118   & 576   & 32          &  & \textbf{10}                          & \textbf{23}                          & \textbf{35}                          & \textbf{46}                          &  & \tt L125.bcsstk05   & 125   & 2701  & 9           &  & \textbf{41}                          &                                       &                                       &                                      \\
\tt bfw62a        & 62    & 639   & 8           &  & \textbf{22}                          &                                      &                                      &                                      &  & \tt L125.can\_\_161 & 125   & 1257  & 15          &  & \textbf{36}\textcolor{red}{$^\star$} & \textbf{74}\textcolor{red}{$^\star$}  & \textbf{102}\textcolor{red}{$^\star$} &                                      \\
\tt can61         & 61    & 866   & 6           &  & \textbf{39}                          &                                      &                                      &                                      &  & \tt L125.can\_\_187 & 125   & 1022  & 20          &  & \textbf{26}\textcolor{red}{$^\star$} & \textbf{54}\textcolor{red}{$^\star$}  & \textbf{76}\textcolor{red}{$^\star$}  & \textbf{102}                         \\
\tt can62         & 62    & 210   & 18          &  & \textbf{7}                           & \textbf{17}                          & \textbf{27}                          &                                      &  & \tt L125.dwt\_\_162 & 125   & 943   & 16          &  & \textbf{23}\textcolor{red}{$^\star$} & \textbf{50}\textcolor{red}{$^\star$}  & \textbf{80}\textcolor{red}{$^\star$}  &                                      \\
\tt can73         & 73    & 652   & 13          &  & \textbf{28}                          & \textbf{52}\textcolor{red}{$^\star$} &                                      &                                      &  & \tt L125.dwt\_\_193 & 125   & 2982  & 8           &  & \textbf{56}                          &                                       &                                       &                                      \\
\tt can96         & 96    & 912   & 10          &  & \textbf{38}\textcolor{red}{$^\star$} & \textbf{72}\textcolor{red}{$^\star$} &                                      &                                      &  & \tt L125.fs\_183\_1 & 125   & 3392  & 9           &  & \textbf{16}                          &                                       &                                       &                                      \\
\tt can\_\_144    & 144   & 1656  & 12          &  & \textbf{45}\textcolor{red}{$^\star$} & \textbf{90}\textcolor{red}{$^\star$} &                                      &                                      &  & \tt L125.gre\_\_185 & 125   & 1177  & 19          &  & \textbf{27}                          & 60                                    & 89                                    &                                      \\
\tt curtis54      & 54    & 337   & 9           &  & \textbf{16}                          &                                      &                                      &                                      &  & \tt L125.lop163     & 125   & 1218  & 17          &  & \textbf{26}\textcolor{red}{$^{\star\star}$}                                   & \textbf{54}\textcolor{red}{$^\star$}  & 97                                    &                                      \\
\tt dwt66         & 66    & 255   & 13          &  & \textbf{15}                          & \textbf{35}\textcolor{red}{$^\star$} &                                      &                                      &  & \tt L125.west0167   & 125   & 444   & 39          &  & \textbf{5}                           & \textbf{11}                           & \textbf{17}                           & \textbf{24}                          \\
\tt dwt72         & 72    & 170   & 24          &  & \textbf{7}                           & \textbf{16}                          & \textbf{26}                          & \textbf{36}                          &  & \tt L125.will199    & 125   & 386   & 45          &  & \textbf{5}                           & \textbf{13}                           & \textbf{20}                           & \textbf{27}                          \\
\tt dwt87         & 87    & 726   & 16          &  & \textbf{11}                          & \textbf{29}                          & \textbf{54}                          &                                      &  & \tt L80.cavity01    & 80    & 1201  & 31          &  & \textbf{10}                          & \textbf{10}                           & \textbf{20}                           & \textbf{31}                          \\
\tt dwt\_\_\_59   & 59    & 256   & 15          &  & \textbf{10}                          & \textbf{25}                          & \textbf{41}                          &                                      &  & \tt L80.fidap025    & 80    & 1201  & 5           &  & .                                    &                                       &                                       &                                      \\
\tt gre\_\_115    & 115   & 576   & 33          &  & \textbf{12}                          & \textbf{24}                          & \textbf{38}\textcolor{red}{$^\star$} & \textbf{51}\textcolor{red}{$^\star$} &  & \tt L80.steam2      & 80    & 1272  & 6           &  & \textbf{48}                          &                                       &                                       &                                      \\
\tt ibm32         & 32    & 179   & 8           &  & \textbf{16}                          &                                      &                                      &                                      &  & \tt L80.wm1         & 80    & 1786  & 15          &  & \textbf{15}                          & \textbf{36}                           & \textbf{49}                           &                                      \\
\tt impcol\_b     & 59    & 329   & 20          &  & \textbf{5}                           & \textbf{13}                          & \textbf{23}                          & \textbf{38}                          &  & \tt L80.wm2         & 80    & 1848  & 11          &  & \textbf{4}                           & \textbf{48}                           &                                       &                                      \\
\tt L100.cavity01 & 100   & 1844  & 36          &  & \textbf{10}                          & \textbf{19}                          & \textbf{21}                          & \textbf{32}                          &  & \tt L80.wm3         & 80    & 1739  & 13          &  & \textbf{4}                           & \textbf{12}                           &                                       &                                      \\
\tt L100.fidap025 & 100   & 2031  & 5           &  & .                                    &                                      &                                      &                                      &  & \tt lund\_a         & 147   & 2837  & 10          &  & \textbf{55}\textcolor{red}{$^\star$} & \textbf{117}\textcolor{red}{$^\star$} &                                       &                                      \\
\tt L100.fidapm02 & 100   & 3090  & 5           &  & \textbf{80}                          &                                      &                                      &                                      &  & \tt pores\_1        & 30    & 179   & 6           &  & \textbf{20}                          &                                       &                                       &                                      \\
\tt L100.rbs480a  & 100   & 2550  & 5           &  & \textbf{66}                          &                                      &                                      &                                      &  & \tt rw136           & 136   & 641   & 39          &  & \textbf{7}                           & \textbf{20}\textcolor{red}{$^\star$}  & \textbf{34}\textcolor{red}{$^\star$}  & \textbf{48}\textcolor{red}{$^\star$} \\
\tt L100.steam2   & 100   & 1766  & 6           &  & \textbf{56}                          &                                      &                                      &                                      &  & \tt steam3          & 80    & 712   & 7           &  & \textbf{32}                          &                                       &                                       &                                      \\
\tt L100.wm1      & 100   & 2956  & 17          &  & \textbf{15}                          & \textbf{28}                          & \textbf{48}                          &                                      &  & \tt west0067        & 67    & 411   & 12          &  & \textbf{20}                          & \textbf{38}\textcolor{red}{$^\star$}  &                                       &                                      \\
\tt L100.wm2      & 100   & 3039  & 12          &  & \textbf{4}                           & \textbf{41}                          &                                      &                                      &  & \tt west0132        & 132   & 560   & 39          &  & \textbf{5}                           & \textbf{12}                           & \textbf{21}                           & \textbf{29}                          \\
\tt L100.wm3      & 100   & 2934  & 15          &  & \textbf{4}                           & \textbf{12}                          & \textbf{53}                          &                                      &  & \tt will57          & 57    & 304   & 10          &  & \textbf{7}                           & \textbf{22}                           &                                       &                                      \\
\tt L120.cavity01 & 120   & 2972  & 36          &  & \textbf{10}                          & \textbf{21}                          & \textbf{23}                          & \textbf{32}                          &  &                     &       &       &             &  &                                      &                                       &                                       &                                      \\ \hline
\end{tabular}
\end{table}


\begin{table}[h!]
\centering
\small
\renewcommand\arraystretch{1.7}
\tabcolsep=2.0pt
\caption{Best known values for weighted instances from the \texttt{Coloring} and \texttt{Partitioning} families. }
\label{tab:weighted_best_known_coloring_partitioning}
\begin{tabular}{lrrrlccccllrrrlcccc}
\hline
                    &       &       &             &  & \multicolumn{4}{c}{$k$}                                                                                                                                                          &  &                 &       &       &             &  & \multicolumn{4}{c}{$k$}                                                                                          \\ \cline{6-9} \cline{16-19} 
Instance            & $|\vertices|$ & $|\edges|$ & $\alpha(G)$ &  & 5                                               & 10                                             & 15                                    & 20                                    &  & Instance        & $|\vertices|$ & $|\edges|$ & $\alpha(G)$ &  & 5                                               & 10                 & 15                  & 20                  \\ \cline{1-4} \cline{6-9} \cline{11-14} \cline{16-19} 
\tt 1-FullIns\_3    & 30    & 100   & 14          &  & \textbf{35}                                     & \textbf{53}                                    &                                       &                                       &  & \tt mug88\_1    & 59    & 256   & 15          &  & \textbf{20}                                     & \textbf{43}        & \textbf{68}         & \textbf{99}         \\
\tt 1-FullIns\_4    & 93    & 593   & 45          &  & \textbf{35}$^\dag$                              & \textbf{66}$^\dag$                             & \textbf{90}$^\dag$                    & \textbf{122}$^\dag$                   &  & \tt mug88\_25   & 66    & 255   & 13          &  & \textbf{14}                                     & \textbf{38}        & \textbf{63}         & \textbf{93}         \\
\tt 1-Insertions\_4 & 67    & 232   & 32          &  & \textbf{40}                                     & \textbf{68}                                    & \textbf{100}                          & \textbf{125}                          &  & \tt mulsol.i.2  & 72    & 170   & 24          &  & .                                               & .                  & .                   & \textbf{96}         \\
\tt 2-FullIns\_3    & 52    & 201   & 25          &  & \textbf{42}                                     & \textbf{71}                                    & \textbf{92}                           & \textbf{125}                          &  & \tt mulsol.i.3  & 87    & 726   & 16          &  & .                                               & .                  & \textbf{96}         & \textbf{98}         \\
\tt 2-Insertions\_3 & 37    & 72    & 18          &  & \textbf{18}                                     & \textbf{50}                                    & \textbf{73}                           &                                       &  & \tt mulsol.i.4  & 115   & 613   & 21          &  & .                                               & .                  & \textbf{96}         & \textbf{98}         \\
\tt 2-Insertions\_4 & 149   & 541   & 74          &  & \textbf{42}$^\dag$                              & \textbf{69}$^\ddag$ & \textbf{99}$^\dag$                    & \textbf{124}$^\dag$                   &  & \tt mulsol.i.5  & 120   & 638   & 22          &  & .                                               & .                  & \textbf{96}         & \textbf{98}         \\
\tt 3-FullIns\_3    & 80    & 346   & 37          &  & \textbf{33}$^\dag$                              & \textbf{53}$^\dag$                             & \textbf{76}$^\dag$                    & \textbf{106}                          &  & \tt myciel3     & 115   & 576   & 33          &  & .                                               &                    &                     &                     \\
\tt 3-Insertions\_3 & 56    & 110   & 27          &  & \textbf{22}                                     & \textbf{47}                                    & \textbf{72}                           & \textbf{95}                           &  & \tt myciel4     & 74    & 301   & 27          &  & \textbf{38}                                     & \textbf{68}        &                     &                     \\
\tt 4-FullIns\_3    & 114   & 541   & 55          &  & \textbf{40}$^\dag$                              & \textbf{81}$^\dag$                             & \textbf{98}$^\dag$                    & \textbf{127}$^\dag$                   &  & \tt myciel5     & 32    & 179   & 8           &  & \textbf{47}                                     & \textbf{77}        & \textbf{105}        & \textbf{129}        \\
\tt 4-Insertions\_3 & 79    & 156   & 39          &  & \textbf{17}                                     & \textbf{43}                                    & \textbf{68}                           & \textbf{94}                           &  & \tt myciel6     & 59    & 329   & 20          &  & \textbf{57}$^\dag$                              & \textbf{87}$^\dag$ & \textbf{115}$^\dag$ & \textbf{138}$^\dag$ \\
\tt 5-FullIns\_3    & 154   & 792   & 72          &  & \textbf{35}$^\dag$                              & \textbf{72}$^\dag$                             & \textbf{95}$^\dag$                    & \textbf{113}$^\dag$                   &  & \tt myciel7     & 198   & 2742  & 40          &  & \textbf{67}$^\dag$                              & 97                 & 125                 & \textbf{148}$^\dag$ \\
\tt anna            & 138   & 493   & 80          &  & \textbf{7}                                      & \textbf{7}$^\dag$                              & \textbf{9}$^\dag$                     & \textbf{15}$^\dag$                    &  & \tt queen10\_10 & 34    & 78    & 20          &  & 360                                             & \textbf{486}       &                     &                     \\
\tt david           & 104   & 331   & 30          &  & .                                               & .                                              & \textbf{17}                           & \textbf{50}                           &  & \tt queen11\_11 & 100   & 1844  & 36          &  & 413                                             & 570                &                     &                     \\
\tt DSJC125.1       & 39    & 118   & 13          &  & \textbf{106}$^\ddag$ & 210                                            & 291                                   & 363                                   &  & \tt queen12\_12 & 100   & 2031  & 5           &  & 461                                             & 693                &                     &                     \\
\tt DSJC125.5       & 49    & 177   & 16          &  & 560                                             & \textbf{645}                                   &                                       &                                       &  & \tt queen13\_13 & 100   & 3090  & 5           &  & 547                                             & 833                &                     &                     \\
\tt games120        & 62    & 639   & 8           &  & \textbf{90}\textcolor{red}{$^\star$}            & 198                                            & \textbf{283}\textcolor{red}{$^\star$} & \textbf{353}\textcolor{red}{$^\star$} &  & \tt queen14\_14 & 100   & 2550  & 5           &  & 639                                             & 970                &                     &                     \\
\tt huck            & 144   & 1656  & 12          &  & \textbf{7}                                      & \textbf{17}                                    & \textbf{33}                           & \textbf{54}                           &  & \tt queen5\_5   & 100   & 1766  & 6           &  & \textbf{103}                                    &                    &                     &                     \\
\tt jean            & 62    & 210   & 18          &  & \textbf{2}                                      & \textbf{4}                                     & \textbf{14}                           & \textbf{19}                           &  & \tt queen6\_6   & 100   & 2956  & 17          &  & \textbf{149}                                    &                    &                     &                     \\
\tt miles1000       & 297   & 2148  & 110         &  & \textbf{297}                                    &                                                &                                       &                                       &  & \tt queen7\_7   & 100   & 3039  & 12          &  & \textbf{199}$^\dag$                             &                    &                     &                     \\
\tt miles1500       & 39    & 170   & 17          &  & \textbf{626}                                    &                                                &                                       &                                       &  & \tt queen8\_12  & 100   & 2934  & 15          &  & \textbf{339}$^\ddag$ &                    &                     &                     \\
\tt miles250        & 54    & 337   & 9           &  & .                                               & .                                              & \textbf{7}                            & \textbf{30}                           &  & \tt queen8\_8   & 120   & 2972  & 36          &  & \textbf{239}$^\dag$                             &                    &                     &                     \\
\tt miles500        & 87    & 406   & 36          &  & \textbf{42}                                     & \textbf{129}\textcolor{red}{$^\star$}          & \textbf{272}\textcolor{red}{$^\star$} &                                       &  & \tt queen9\_9   & 120   & 4307  & 5           &  & \textbf{296}$^\dag$                             &                    &                     &                     \\
\tt miles750        & 62    & 159   & 28          &  & \textbf{120}                                    & \textbf{400}\textcolor{red}{$^\star$}          &                                       &                                       &  & \tt r125.1      & 120   & 2787  & 5           &  & .                                               & .                  & \textbf{2}          & \textbf{9}          \\
\tt mug100\_1       & 125   & 736   & 34          &  & \textbf{10}                                     & \textbf{27}                                    & \textbf{46}                           & \textbf{69}                           &  & \tt r125.1c     & 120   & 4626  & 5           &  & \textbf{648}                                    &                    &                     &                     \\
\tt mug100\_25      & 125   & 3891  & 10          &  & \textbf{11}                                     & \textbf{30}                                    & \textbf{52}                           & \textbf{77}                           &  & \tt r125.5      & 120   & 3273  & 6           &  & \textbf{505}                                    &                    &                     &                     \\ 
\hline
\tt adjnoun        & 112   & 425   & 53          &  & \textbf{11}$^\dag$ & \textbf{29} & \textbf{51}        & \textbf{81}                           &  & \tt jazz          & 61    & 866   & 6           &  & \textbf{23}$^\dag$ & \textbf{70}$^\dag$ & \textbf{133}$^\ddag$ & \textbf{228}\textcolor{red}{$^\star$} \\
\tt celegansneural & 130   & 7763  & 6           &  & \textbf{5}         & \textbf{5}  & \textbf{15}$^\dag$ & \textbf{37}$^\dag$                    &  & \tt karate        & 73    & 652   & 13          &  & \textbf{11}        & \textbf{23}        & \textbf{34}                                     & \textbf{61}                           \\
\tt chesapeake     & 85    & 219   & 29          &  & \textbf{28}        & \textbf{60} & \textbf{92}        &                                       &  & \tt lesmis        & 96    & 912   & 10          &  & \textbf{4}         & \textbf{6}         & \textbf{13}                                     & \textbf{21}                           \\
\tt dolphins       & 85    & 616   & 14          &  & \textbf{10}        & \textbf{30} & \textbf{66}        & \textbf{89}                           &  & \tt polbooks      & 80    & 254   & 38          &  & \textbf{34}        & \textbf{79}        & \textbf{103}                                    & \textbf{136}                          \\
\tt football       & 118   & 576   & 32          &  & \textbf{101}       & 221         & 314                & \textbf{384}\textcolor{red}{$^\star$} &  &                   &       &       &             &  &                    &                    &                                                 &                                       \\ \hline
\end{tabular}
\end{table}

\begin{table}[h!]
\centering
\small
\renewcommand\arraystretch{1.7}
\tabcolsep=1.25pt
\caption{Best known values for weighted instances from the \texttt{Intersection} family.}
\label{tab:weighted_best_known_intersection}
\begin{tabular}{lrrrlccccllrrrlcccc}
\hline
                  &       &       &             &  & \multicolumn{4}{c}{$k$}                                                                                                                                       &  &                     &       &       &             &  & \multicolumn{4}{c}{$k$}                                                                                                                                                \\ \cline{6-9} \cline{16-19} 
Instance & $|\vertices|$ & $|\edges|$ & $\alpha(G)$ &  & 5                                     & 10                                    & 15                                    & 20                                    &  & Instance   & $|\vertices|$ & $|\edges|$ & $\alpha(G)$ &  & 5                                     & 10                                    & 15                                             & 20                                    \\ \cline{1-4} \cline{6-9} \cline{11-14} \cline{16-19} 
\tt arc130        & 120   & 3387  & 23          &  & \textbf{442}                          &                                       &                                       &                                       &  & \tt L120.fidap022   & 184   & 3916  & 86          &  & \textbf{486}                          &                                       &                                                &                                       \\
\tt ash219        & 125   & 390   & 37          &  & \textbf{36}                           & \textbf{78}                           & \textbf{120}                          & \textbf{164}                          &  & \tt L120.fidap025   & 185   & 3946  & 86          &  & .                                     &                                       &                                                &                                       \\
\tt ash331        & 125   & 2701  & 9           &  & \textbf{39}                           & \textbf{101}\textcolor{red}{$^\star$} & \textbf{161}\textcolor{red}{$^\star$} & \textbf{216}\textcolor{red}{$^\star$} &  & \tt L120.fidapm02   & 186   & 3973  & 88          &  & \textbf{509}                          &                                       &                                                &                                       \\
\tt ash85         & 125   & 1257  & 15          &  & \textbf{117}                          & \textbf{240}\textcolor{red}{$^\star$} &                                       &                                       &  & \tt L120.rbs480a    & 11    & 20    & 5           &  & \textbf{433}                          &                                       &                                                &                                       \\
\tt bcspwr01      & 125   & 1022  & 20          &  & \textbf{28}                           & \textbf{70}                           &                                       &                                       &  & \tt L120.wm2        & 23    & 71    & 11          &  & \textbf{7}                            & \textbf{28}                           & \textbf{67}                                    & \textbf{239}                          \\
\tt bcspwr02      & 125   & 943   & 16          &  & \textbf{38}                           & \textbf{87}                           & \textbf{133}                          &                                       &  & \tt L125.ash608     & 47    & 236   & 23          &  & \textbf{37}                           & \textbf{90}\textcolor{red}{$^\star$}  & \textbf{142}\textcolor{red}{$^\star$}          & \textbf{197}\textcolor{red}{$^\star$} \\
\tt bcspwr03      & 125   & 2982  & 8           &  & \textbf{53}                           & \textbf{113}                          & \textbf{168}                          & \textbf{235}                          &  & \tt L125.bcsstk05   & 95    & 755   & 47          &  & \textbf{218}                          &                                       &                                                &                                       \\
\tt bfw62a        & 125   & 3392  & 9           &  & \textbf{114}                          &                                       &                                       &                                       &  & \tt L125.can\_\_161 & 191   & 2360  & 95          &  & \textbf{193}\textcolor{red}{$^\star$} & \textbf{388}\textcolor{red}{$^\star$} & \textbf{543}\textcolor{red}{$^\star$}          &                                       \\
\tt can61         & 125   & 1218  & 17          &  & \textbf{207}                          &                                       &                                       &                                       &  & \tt L125.can\_\_187 & 105   & 441   & 43          &  & \textbf{134}\textcolor{red}{$^\star$} & \textbf{270}\textcolor{red}{$^\star$} & \textbf{402}\textcolor{red}{$^\star$}          & \textbf{541}                          \\
\tt can62         & 125   & 444   & 39          &  & \textbf{31}                           & \textbf{78}                           & \textbf{130}                          &                                       &  & \tt L125.dwt\_\_162 & 30    & 179   & 6           &  & \textbf{109}\textcolor{red}{$^\star$} & \textbf{245}\textcolor{red}{$^\star$} & \textbf{395}\textcolor{red}{$^\star$}          &                                       \\
\tt can73         & 125   & 386   & 45          &  & \textbf{144}                          & \textbf{271}\textcolor{red}{$^\star$} &                                       &                                       &  & \tt L125.dwt\_\_193 & 100   & 1470  & 10          &  & \textbf{291}                          &                                       &                                                &                                       \\
\tt can96         & 80    & 1201  & 31          &  & \textbf{190}\textcolor{red}{$^\star$} & \textbf{354}\textcolor{red}{$^\star$} &                                       &                                       &  & \tt L125.fs\_183\_1 & 121   & 1980  & 11          &  & \textbf{71}                           &                                       &                                                &                                       \\
\tt can\_\_144    & 125   & 1177  & 19          &  & \textbf{217}\textcolor{red}{$^\star$} & \textbf{472}\textcolor{red}{$^\star$} &                                       &                                       &  & \tt L125.gre\_\_185 & 144   & 2596  & 12          &  & \textbf{144}\textcolor{red}{$^\star$} & 348                                   & 502                                            &                                       \\
\tt curtis54      & 80    & 1201  & 5           &  & \textbf{74}                           &                                       &                                       &                                       &  & \tt L125.lop163     & 169   & 3328  & 13          &  & \textbf{142}\textcolor{red}{$^\star$} & 300                                   & 521                                            &                                       \\
\tt dwt66         & 80    & 1786  & 15          &  & \textbf{54}                           & \textbf{155}\textcolor{red}{$^\star$} &                                       &                                       &  & \tt L125.west0167   & 196   & 4186  & 14          &  & \textbf{19}                           & \textbf{46}                           & \textbf{73}                                    & \textbf{109}                          \\
\tt dwt72         & 80    & 1848  & 11          &  & \textbf{26}                           & \textbf{64}                           & \textbf{105}                          & \textbf{169}                          &  & \tt L125.will199    & 25    & 160   & 5           &  & \textbf{21}                           & \textbf{60}                           & \textbf{92}                                    & \textbf{127}                          \\
\tt dwt87         & 80    & 1739  & 13          &  & \textbf{66}                           & \textbf{164}\textcolor{red}{$^\star$} & \textbf{313}                          &                                       &  & \tt L80.cavity01    & 36    & 290   & 6           &  & \textbf{43}                           & \textbf{49}                           & \textbf{92}                                    & \textbf{154}                          \\
\tt dwt\_\_\_59   & 80    & 1272  & 6           &  & \textbf{59}                           & \textbf{141}                          & \textbf{226}                          &                                       &  & \tt L80.fidap025    & 49    & 476   & 7           &  & .                                     &                                       &                                                &                                       \\
\tt gre\_\_115    & 77    & 254   & 35          &  & \textbf{44}                           & \textbf{108}                          & \textbf{181}\textcolor{red}{$^\star$} & \textbf{255}\textcolor{red}{$^\star$} &  & \tt L80.steam2      & 96    & 1368  & 8           &  & \textbf{257}                          &                                       &                                                &                                       \\
\tt ibm32         & 147   & 2837  & 10          &  & \textbf{80}                           &                                       &                                       &                                       &  & \tt L80.wm1         & 64    & 728   & 8           &  & \textbf{88}                           & \textbf{218}                          & \textbf{281}                                   &                                       \\
\tt impcol\_b     & 128   & 3216  & 8           &  & \textbf{22}                           & \textbf{58}                           & \textbf{109}                          & \textbf{202}                          &  & \tt L80.wm2         & 81    & 1056  & 9           &  & \textbf{24}                           & \textbf{264}                          &                                                &                                       \\
\tt L100.cavity01 & 128   & 5198  & 5           &  & \textbf{49}                           & \textbf{91}                           & \textbf{100}                          & \textbf{162}                          &  & \tt L80.wm3         & 125   & 209   & 49          &  & \textbf{23}                           & \textbf{74}                           &                                                &                                       \\
\tt L100.fidap025 & 128   & 387   & 44          &  & .                                     &                                       &                                       &                                       &  & \tt lund\_a         & 125   & 7501  & 7           &  & \textbf{304}\textcolor{red}{$^\star$} & \textbf{661}\textcolor{red}{$^\star$} &                                                &                                       \\
\tt L100.fidapm02 & 128   & 1170  & 18          &  & \textbf{443}                          &                                       &                                       &                                       &  & \tt pores\_1        & 125   & 3838  & 5           &  & \textbf{99}                           &                                       &                                                &                                       \\
\tt L100.rbs480a  & 128   & 2113  & 12          &  & \textbf{370}                          &                                       &                                       &                                       &  & \tt rw136           & 136   & 641   & 39          &  & \textbf{40}                           & \textbf{107}\textcolor{red}{$^\star$} & \textbf{173}\textcolor{red}{$^\star$}          & \textbf{249}\textcolor{red}{$^\star$} \\
\tt L100.steam2   & 100   & 166   & 33          &  & \textbf{303}                          &                                       &                                       &                                       &  & \tt steam3          & 80    & 712   & 7           &  & \textbf{145}                          &                                       &                                                &                                       \\
\tt L100.wm1      & 100   & 166   & 33          &  & \textbf{78}                           & \textbf{169}                          & \textbf{274}                          &                                       &  & \tt west0067        & 67    & 411   & 12          &  & \textbf{97}                           & \textbf{188}                          &                                                &                                       \\
\tt L100.wm2      & 88    & 146   & 29          &  & \textbf{20}                           & \textbf{237}                          &                                       &                                       &  & \tt west0132        & 132   & 560   & 39          &  & \textbf{21}$^\dag$                    & \textbf{57}$^\dag$                    & \textbf{97}$^\ddag$ & \textbf{132}                          \\
\tt L100.wm3      & 88    & 146   & 29          &  & \textbf{20}                           & \textbf{76}                           & \textbf{303}                          &                                       &  & \tt will57          & 57    & 304   & 10          &  & \textbf{33}                           & \textbf{113}                          &                                                &                                       \\
\tt L120.cavity01 & 188   & 3885  & 90          &  & \textbf{49}$^\dag$                    & \textbf{100}                          & \textbf{115}                          & \textbf{168}                          &  &                     &       &       &             &  &                                       &                                       &                                                &                                       \\ \hline
\end{tabular}
\end{table}

\end{APPENDICES}
\end{document}